\documentclass[12pt, reqno]{amsart}
\usepackage{amssymb, amsthm, amsmath, amsfonts, mathrsfs}
\usepackage{array, epsfig}
\usepackage{bbm}
\usepackage{hyperref}
\usepackage{color}

\usepackage[left=2.2cm, top=2.45cm,bottom=2.45cm,right=2.2cm]{geometry}

\parindent=0mm

\newcommand{\dist}{\mathop{\mathrm{dist}}\nolimits}
\newcommand{\intt}{\operatorname{int}}
\newcommand{\cl}{\operatorname{cl}}

\newcommand{\bE}{\mathbb{E}}

\newcommand{\bN}{\mathbb{N}}

\newcommand{\cD}{\mathcal{D}}

\newcommand{\cL}{\mathcal{L}}

\newcommand{\cN}{\mathcal{N}}
\newcommand{\cV}{\mathcal{V}}
\newcommand{\cW}{\mathcal{W}}

\newcommand{\Beta}{\text{\rm Beta}}


\newcommand{\E}{\mathbb E}

\newcommand{\R}{\mathbb{R}}
\newcommand{\N}{\mathbb{N}}

\newcommand{\C}{\mathbb{C}}

\renewcommand{\P}{\mathbb{P}}

\renewcommand{\Re}{\operatorname{Re}}


\newcommand{\Var}{\mathop{\mathrm{Var}}\nolimits}

\newcommand{\aff}{\mathop{\mathrm{aff}}\nolimits}




\newcommand{\eps}{\varepsilon}
\newcommand{\eqdistr}{\stackrel{d}{=}}

\newcommand{\todistr}{\overset{d}{\underset{n\to\infty}\longrightarrow}}

\newcommand{\toas}{\overset{a.s.}{\underset{n\to\infty}\longrightarrow}}

\newcommand{\ton}{\overset{}{\underset{n\to\infty}\longrightarrow}}

\newcommand{\dd}{{\rm d}}
\newcommand{\eee}{{\rm e}}

\theoremstyle{plain}
\newtheorem{theorem}{Theorem}[section]
\newtheorem{lemma}[theorem]{Lemma}
\newtheorem{corollary}[theorem]{Corollary}
\newtheorem{proposition}[theorem]{Proposition}

\theoremstyle{definition}

\theoremstyle{remark}
\newtheorem{remark}[theorem]{Remark}


\begin{document}

\author{Julian Grote}
\address{Julian Grote: Fakult\"at f\"ur Mathematik,
Ruhr-Universit\"at Bochum,
44780 Bochum, Germany}
\email{julian.grote@rub.de}

\author{Zakhar Kabluchko}
\address{Zakhar Kabluchko: Institut f\"ur Mathematische Stochastik,
Universit\"at M\"unster,
48149 M\"unster, Germany}
\email{zakhar.kabluchko@uni-muenster.de}

\author{Christoph Th\"ale}
\address{Chrisotph Th\"ale: Fakult\"at f\"ur Mathematik,
Ruhr-Universit\"at Bochum,
44780 Bochum, Germany}
\email{christoph.thaele@rub.de}

\title[Limit theorems for random simplices]{Limit theorems for random simplices in high dimensions}
\keywords{Central limit theorem, large deviations, moderate deviations, mod-$\phi$ convergence, random simplex, volume}

\subjclass[2010]{52A22, 52A23, 60D05, 60F05, 60F10}

\begin{abstract}
Let $r=r(n)$ be a sequence of integers such that $r\leq n$ and let $X_1,\ldots,X_{r+1}$ be independent random points distributed according to the Gaussian, the Beta or the spherical distribution on $\mathbb{R}^n$. Limit theorems for the log-volume and the volume of the random convex hull of $X_1,\ldots,X_{r+1}$ are established in high dimensions, that is, as $r$ and $n$ tend to infinity simultaneously. This includes, Berry-Esseen-type central limit theorems, log-normal limit theorems, moderate and large deviations. Also different types of mod-$\phi$ convergence are derived. The results heavily depend on the asymptotic growth of $r$ relative to $n$. For example, we prove that the fluctuations of the volume of the simplex are normal (respectively, log-normal) if $r=o(n)$ (respectively, $r\sim \alpha n$ for some $0 < \alpha < 1$). 
\end{abstract}

\maketitle

\section{Introduction}

In the last decades, random polytopes have become one of the most central models studied in stochastic geometry. In particular, they have seen numerous applications to other branches of mathematics such as asymptotic geometric analysis, compressed sensing, computational geometry, optimization or multivariate statistics; see, for example, the surveys of B\'ar\'any \cite{BaranySurvey}, Hug \cite{HugSurvey} and Reitzner \cite{ReitznerSurvey} for further details and references. The focus in most works has been on models of the following type. First, we fix a space dimension $n\in\N$ and a probability measure $\mu$ on $\R^n$. Then, we let $X_1,\ldots,X_r$, where $r\geq n+1$, be independent random points in $\R^n$ that are distributed according to $\mu$. A random polytope $P_r$ now arises by taking the convex hull of the point set $X_1,\ldots,X_r$. Starting with the seminal paper of R\'enyi and Sulanke \cite{RenyiSulanke}, the asymptotic behaviour of the expectation and the variance of the volume or the number of faces of $P_r$ has been studied intensively, as $r\to\infty$, while keeping $n$ fixed. Moreover, it has been investigated whether these quantities satisfy a 'typical' and 'atypical' behaviour, i.e., fulfil a central limit theorem, large or moderate deviation principles and concentration inequalities, respectively, to name just a few topics of current research.

However, up to a few exceptions it has not been investigated what happens if the space dimension $n$ is not fixed, but tends to infinity. The only such exceptions we were able to localize in the literature are the papers of Ruben \cite{RubenCLT}, Mathai \cite{MathaiCLT}, Anderson \cite{Anderson} and Maehara \cite{Maehara}. It is shown in the first two of these works that for any {\it fixed} $r\in\N$ the $r$-volume of the convex hull of $r+1\le n+1$ independent and uniform random points of which some are in the interior of the $n$-dimensional unit ball and the others on its boundary, is asymptotically normal, as $n\to\infty$. The third one establishes the same result in the situation where the $r$ points are distributed according to the so-called Beta-type distribution in the $n$-dimensional unit ball. The fourth mentioned text generalizes the set-up to an arbitrary underlying $n$-fold product distribution on $\mathbb{R}^n$.

On the other hand, the regime in which $r$ and $n$ tend to infinity \textit{simultaneously} is not treated in these papers. The purpose of the present text is to close this gap and to prove a collection of probabilistic limit theorems for the $r$-volume of the convex hull of $r+1\le n+1$ random points that are distributed according to certain classes of probability distributions that allow for explicit computations, especially focusing on different regimes of growths of the parameter $r$ relative to $n$. More precisely, we distinguish between the following three regimes. The first one is the case where $r$ grows like $o(n)$ with the dimension $n$, which means that $r/n$ converges to zero, as $n \rightarrow \infty$. This of course includes the situations where $r$ is fixed -- covering thereby the case considered in the four papers mentioned above -- or behaves like $n^\alpha$ wir $\alpha\in (0,1)$, to give just two examples (let us emphasize at this point that we interpret expressions like $\sqrt{n}$ or $n/2$ as $\lfloor \sqrt{n} \rfloor$ and $\lfloor n/2 \rfloor$, respectively, in what follows). Secondly, the underlying situation might be the one where $r$ is asymptotically equivalent to $\alpha n$ for some $\alpha \in (0,1)$. Lastly, we analyse the setting where $n-r = o(n)$, as $n\rightarrow \infty$. In particular, for $r=n$ we arrive in the situation where we choose $n+1$ random points and thus their convex hull is nothing but a full-dimensional simplex in $\mathbb{R}^n$.\\

Our paper and the results we are going to present (and which represent a 'complete' description of the high-dimensional probabilistic behaviour of the underlying random simplices) are organized as follows. In Section $2$ we introduce the different random point models we consider and state formulas for the moments of the volume of the random simplices induced by the convex hulls of these point sets. By using these moments, we are then able to derive the precise distributions of the previously mentioned volumes. In Section $3$ we start with the first limit theorems. By using the method of cumulants we give 'optimal' Berry-Esseen bounds and moderate deviation principles for the logarithmic volumes of our random simplices. Then, we transfer the limit theorem from the log-volume to the volume itself and obtain thereby a phase transition in the limiting behaviour depending on the choice of the parameter $r$. Section $4$ establishes results concerning so-called mod-$\phi$ convergence and is also the starting point to prove the results presented in Section $5$, where we add large deviation principles to the moderate ones obtained earlier in Section $3$.

\section{Models, volumes and probabilistic representations}

\subsection{The four models}\label{sec:SectionModels}
In this paper we consider convex hulls of random points $X_1,X_2,\ldots$ We only consider the following four models which allow for explicit computations. These models were identified in~\cite{Miles71} and~\cite{RubenMiles80} by Miles and Ruben and Miles, respectively.

\begin{itemize}
\item [(a)] The \emph{Gaussian model}: $X_1,X_2,\ldots$ are i.i.d.\ with standard normal density
$$
f(|x|) = (2\pi)^{-n/2} \cdot \eee^{-\frac 12 |x|^2}, \quad x\in\R^n.
$$
\item [(b)] The \emph{Beta model} with parameter $\nu>0$: $X_1,X_2,\ldots$ are i.i.d.\ points in the ball of radius $1$ with density
$$
f(|x|) = \frac 1 {\pi^{n/2}} \frac{\Gamma\left(\frac{n+\nu}{2}\right)}{\Gamma\left(\frac \nu 2\right)} \cdot \left(1- |x|^2\right)^{(\nu-2)/2}, \quad x\in\R^n, \;\; |x| < 1.
$$
\item [(c)] The \emph{Beta prime model} with parameter $\nu>0$: $X_1,X_2,\ldots$ are i.i.d.\ points with density
$$
f(|x|) = \frac 1 {\pi^{n/2}} \frac{\Gamma\left(\frac{n+\nu}{2}\right)}{\Gamma\left(\frac \nu 2\right)} \cdot \left(1 + |x|^2\right)^{-(n+ \nu)/2}, \quad x\in\R^n.
$$
\item [(d)] The \emph{spherical model}: $X_1,X_2,\ldots$ are uniformly distributed on the sphere of radius $1$ centered at the origin of $\R^n$.
\end{itemize}

\begin{remark}
Observe that in the Beta prime model the power is $(n+\nu)/2$ (which depends on $n$) rather than just $\nu/2$.
\end{remark}

\subsection{Moments for the volumes of random simplices and parallelotopes}
Let $1\leq r\leq n$ be an integer and $X_1,\ldots,X_{r+1}$ be independent random points in $\R^n$ that are distributed according to one of the distributions introduced in Section \ref{sec:SectionModels}. By $\mathcal V_{n,r}$ we denote the $r$-dimensional volume of the simplex with vertices $X_1,\ldots,X_{r+1}$. Moreover, we use the symbol $\mathcal W_{n,r}$ to indicate the $r$-dimensional volume of the parallelotope spanned by the vectors $X_1,\ldots,X_r$. Note that up to a factor $r!$, $\cW_{n,r}$ is the same as the $r$-dimensional volume of the simplex with vertices $0,X_1,\ldots,X_{r}$. We start by recalling formulas for the moments of $\cW_{n,r}$. Moments of integer orders can directly be computed using the well-known linear Blaschke-Petkantschin formula from integral geometry together with an induction argument.

\begin{theorem}[Moments for parallelotopes]\label{theo:vol_Parallelotopes_Moments}
Let $\mathcal W_{n,r}$ be the volume of the $r$-dimensional parallelotope spanned by the vectors $X_1,\ldots,X_{r}$ chosen according to one of the above four models.
\begin{itemize}
\item[(a)] In the Gaussian model we have, for all $k\geq 0$,
$$
\E[\cW_{n,r}^{2k}] = \prod_{j=1}^r\Bigg[2^k{\Gamma\Big({n-r+j\over 2}+k\Big)\over\Gamma\Big({n-r+j\over 2}\Big)}\Bigg].
$$
\item[(b)] In the Beta model with parameter $\nu>0$ we have, for all $k\geq 0$,
$$
\E[\cW_{n,r}^{2k}] =\prod_{j=1}^r\Bigg[{\Gamma\Big({n-r+j\over 2}+k\Big)\Gamma\Big({n+\nu\over 2}\Big)\over\Gamma\Big({n-r+j\over 2}\Big)\Gamma\Big({n+\nu\over 2}+k\Big)}\Bigg].
$$
\item[(c)] In the Beta prime model with parameter $\nu>0$ we have, for all $k\in (0,\frac \nu 2]$,
$$
\E[\cW_{n,r}^{2k}] =\prod_{j=1}^r\Bigg[{\Gamma\Big({n-r+j\over 2}+k\Big)\Gamma\Big({\nu\over 2}-k\Big)\over\Gamma\Big({n-r+j\over 2}\Big)\Gamma\Big({\nu\over 2}\Big)}\Bigg].
$$
\item[(d)] In the spherical model we have, for all $k\geq 0$,
$$
\E[\cW_{n,r}^{2k}] =\prod_{j=1}^r\Bigg[{\Gamma\Big({n-r+j\over 2}+k\Big)\Gamma\Big({n\over 2}\Big)\over\Gamma\Big({n-r+j\over 2}\Big)\Gamma\Big({n\over 2}+k\Big)}\Bigg].
$$
\end{itemize}
\end{theorem}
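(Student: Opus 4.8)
The plan is to reduce the entire statement to a single scalar quantity --- the $2k$-th moment of the norm of the orthogonal projection of one model point onto a fixed linear subspace --- and then to build up the stated products by an induction on the number of points that peels off one vertex at a time. Write $U_{r-1}=\Span(X_1,\ldots,X_{r-1})$, which is almost surely $(r-1)$-dimensional, and use the elementary base-times-height factorization
$$
\cW_{n,r}=\cW_{n,r-1}\cdot\dist(X_r,U_{r-1}),\qquad \dist(X_r,U_{r-1})=\big|P_{U_{r-1}^{\perp}}X_r\big|,
$$
where $U_{r-1}^{\perp}$ has dimension $n-r+1$. This geometric identity, together with the invariance property of the relevant Jacobian, is precisely the content carried by the linear Blaschke--Petkantschin formula. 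Since $X_r$ is independent of $X_1,\ldots,X_{r-1}$ and its law is rotationally invariant, conditioning on the first $r-1$ points and integrating out $X_r$ gives
$$
\E\big[\dist(X_r,U_{r-1})^{2k}\mid X_1,\ldots,X_{r-1}\big]=\E\big[|P_WX|^{2k}\big]=:J_{n-r+1}(k),
$$
where $X$ is a single model point and $W$ is any fixed subspace with $\dim W=n-r+1$; rotational invariance guarantees that this depends on $W$ only through its dimension, hence not on the configuration at all. By the tower property this yields the recursion $\E[\cW_{n,r}^{2k}]=J_{n-r+1}(k)\,\E[\cW_{n,r-1}^{2k}]$, so that, with the base case $r=1$ being the radial moment, induction delivers
$$
\E[\cW_{n,r}^{2k}]=\prod_{d=n-r+1}^{n}J_d(k).
$$

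Next I would evaluate $J_d(k)=\E[|P_WX|^{2k}]$ for a $d$-dimensional subspace $W$ in a model-independent way, using the radius--direction decomposition $X=R\,\Theta$ with $R=|X|$ and $\Theta$ uniform on $S^{n-1}$, which are independent for every rotationally invariant law. Then $|P_WX|^2=R^2\,|P_W\Theta|^2$, and the classical fact that $|P_W\Theta|^2\sim\mathrm{Beta}(d/2,(n-d)/2)$ gives
$$
J_d(k)=\E[R^{2k}]\cdot\E\big[|P_W\Theta|^{2k}\big]=\E[R^{2k}]\cdot\frac{\Gamma(d/2+k)\,\Gamma(n/2)}{\Gamma(d/2)\,\Gamma(n/2+k)} .
$$
Thus every factor splits into the ``dimension-dependent'' Gamma-ratio $\Gamma(d/2+k)/\Gamma(d/2)$, which after reindexing $d=n-r+j$ produces exactly the product $\prod_{j=1}^{r}\Gamma(\tfrac{n-r+j}{2}+k)/\Gamma(\tfrac{n-r+j}{2})$ appearing in all four cases, and a constant $\Gamma(n/2)\,\E[R^{2k}]/\Gamma(n/2+k)$ that is the same in each factor and supplies the $r$-fold model-dependent prefactor.

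It remains only to insert the radial moments $\E[R^{2k}]$ for each model. In the Gaussian case $R^2\sim\chi^2_n$ gives $\E[R^{2k}]=2^{k}\Gamma(\tfrac n2+k)/\Gamma(\tfrac n2)$, so the constant factor collapses to $2^{k}$; in the Beta model $R^2\sim\mathrm{Beta}(\tfrac n2,\tfrac\nu2)$ yields the factor $\Gamma(\tfrac{n+\nu}{2})/\Gamma(\tfrac{n+\nu}{2}+k)$; the spherical model is the degenerate case $R\equiv 1$ (equivalently $\nu\to0$ of the Beta model), giving $\Gamma(\tfrac n2)/\Gamma(\tfrac n2+k)$; and for the Beta prime model $R^2$ has the corresponding heavy-tailed density, producing $\Gamma(\tfrac\nu2-k)/\Gamma(\tfrac\nu2)$. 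Assembling these over $d=n-r+1,\ldots,n$ reproduces each of the four displayed formulas.

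I expect the main obstacle to be technical and concentrated in the Beta prime model rather than structural. The conceptual step --- the independence and dimension-only dependence of the height --- is short once rotational invariance is invoked, and is exactly the role played by Blaschke--Petkantschin. The delicate point is convergence: because of the tail $(1+|x|^2)^{-(n+\nu)/2}$, the radial moment $\E[R^{2k}]$ and hence each $J_d(k)$ is finite only for $2k\le\nu$, which is precisely the restriction $k\in(0,\tfrac\nu2]$ in the statement; one must check that the Gamma-function manipulations valid in the finite-moment models persist on this range and that the induction never steps outside it. A minor point to dispatch along the way is that $X_1,\ldots,X_{r-1}$ are almost surely linearly independent, so that $U_{r-1}$ really has the stated dimension and the height decomposition is legitimate for a.e.\ configuration.
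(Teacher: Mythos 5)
Your proposal is correct, and it takes a genuinely different route from the paper: the paper does not prove Theorem~\ref{theo:vol_Parallelotopes_Moments} at all, but cites the literature (Mathai, Ruben, and Theorems~19.2.5--19.2.6 of the Mathai--Charalambides reference), remarking only that integer moments can be obtained from the linear Blaschke--Petkantschin formula plus induction. Your argument replaces the integral-geometric machinery with an elementary probabilistic recursion: the Gram-determinant factorization $\cW_{n,r}=\cW_{n,r-1}\cdot\dist(X_r,\Span(X_1,\ldots,X_{r-1}))$, rotational invariance to see that the conditional height moment depends only on $\dim U_{r-1}^{\perp}=n-r+1$, the decomposition $X=R\Theta$, and the classical fact $|P_W\Theta|^2\sim\Beta(d/2,(n-d)/2)$. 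This buys two things the citation route does not make explicit: it handles all real $k\geq 0$ in one sweep (no integrality is ever used, whereas the direct Blaschke--Petkantschin computation is naturally restricted to integer moments), and since your height factors are genuinely independent with explicit laws, the same argument actually delivers the stronger distributional identities of Theorem~\ref{theo:vol_distr_linear} for free --- e.g.\ in the Beta model the product $\Beta(\tfrac n2,\tfrac \nu2)\cdot\Beta(\tfrac d2,\tfrac{n-d}2)$ of independent factors is $\Beta(\tfrac d2,\tfrac{n-d+\nu}2)$, recovering the factors $\beta_{\frac{n-r+j}2,\frac{\nu+r-j}2}$ there. Two trivia to tidy: since all quantities are nonnegative, Tonelli validates the recursion in $[0,\infty]$, so the Beta prime convergence worry needs no delicate bookkeeping --- the only finiteness issue is the radial moment itself; and the moment is finite precisely for $2k<\nu$ (at $k=\nu/2$ both sides equal $+\infty$ via $\Gamma(0^+)=+\infty$, matching the endpoint the statement formally includes), so write $2k<\nu$ rather than $2k\le\nu$ when asserting finiteness.
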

\begin{proof}
The formula in (a) can be concluded from \cite{Mathai_Random_Parallelotopes} or \cite{Ruben79}. Formula (b) is Theorem 19.2.5 from \cite{mathai_charalambides}, Formula (c) is Theorem 19.2.6 from \cite{mathai_charalambides}. Formula (d) is the limiting case of (c) for $\nu \downarrow 0$ but is actually also contained both in Theorems~19.2.5 and 19.2.6 from \cite{mathai_charalambides} because these deal with a slightly more general model which allows for some points to be chosen uniformly on the unit sphere.
\end{proof}

For simplices, the moments are very similar. The products appearing in the formulas for simplices are the same as for parallelotopes, but certain additional factors involving the $\Gamma$-function appear. Again, for moments of integer order, a direct proof for these formulas can be carried out using the affine Blaschke-Petkantschin formula and an induction argument (compare, for example, with the proof of \cite[Theorem 8.2.3]{SW} for the special case of the Beta model with $\nu=2$ and the spherical model.)

\begin{theorem}[Moments for simplices]\label{theo:vol_Simplices_Moments}
Let $\mathcal V_{n,r}$ be the volume of the $r$-dimensional simplex with vertices $X_1,\ldots,X_{r+1}$ chosen according to one of the above four models.
\begin{itemize}
\item[(a)] In the Gaussian model we have, for all real $k\geq 0$,
$$
\E[(r!\cV_{n,r})^{2k}] = (r+1)^{k}\, \prod_{j=1}^r\bigg[2^{k}{\Gamma\big({n-r+j\over 2}+k\big)\over\Gamma\big({n-r+j\over 2}\big)}\bigg].
$$
\item[(b)] In the Beta model with parameter $\nu>0$ we have, for all real $k\geq 0$,
\begin{align*}
\E[(r!\cV_{n,r})^{2k}]
&=
\prod_{j=1}^r\Bigg[{\Gamma\Big({n-r+j\over 2}+k\Big)\over\Gamma\Big({n-r+j\over 2}\Big)}{\Gamma\Big({n+\nu\over 2}\Big)\over\Gamma\Big({n+\nu\over 2}+k\Big)}\Bigg]\\
&\qquad\qquad\qquad\times{\Gamma\Big({n+\nu\over 2}\Big)\over\Gamma\Big({n+\nu\over 2}+k\Big)}{\Gamma\Big({r(n+\nu-2)+(n+\nu)\over 2}+(r+1)k\Big)\over\Gamma\Big({r(n+\nu-2)+(n+\nu)\over 2}+rk\Big)}.
\end{align*}
\item[(c)] In the Beta prime model with parameter $\nu>0$ we have, for all  real $0\leq k<{\nu\over 2}$,
\begin{align*}
\E[(r!\cV_{n,r})^{2k}]
&=
\prod_{j=1}^r \Bigg[
\frac{\Gamma\Big(\frac{n-r+j}{2}+k\Big)}{\Gamma\Big(\frac{n-r+j}{2}\Big)}
\frac{\Gamma\Big(\frac \nu 2 -k \Big)}{\Gamma\Big(\frac \nu2\Big)}
\Bigg]
\frac{\Gamma\Big(\frac \nu 2 -k \Big)}{\Gamma\Big(\frac \nu2\Big)}
\frac{\Gamma\Big( \frac{(r+1)\nu}{2} -rk \Big)}{\Gamma\Big( \frac{(r+1)\nu}{2} - (r+1)k \Big)}.
\end{align*}
\item[(d)] In the spherical model we have, for all real $k\geq 0$,
\begin{align*}
\E[(r!\cV_{n,r})^{2k}] &= \prod_{j=1}^r\Bigg[{\Gamma\Big({n-r+j\over 2}+k\Big)\over\Gamma\Big({n-r+j\over 2}\Big)}{\Gamma\Big({n\over 2}\Big)\over\Gamma\Big({n\over 2}+k\Big)}\Bigg]{\Gamma\Big({n\over 2}\Big)\over\Gamma\Big({n\over 2}+k\Big)}{\Gamma\Big({r(n-2)+n\over 2}+(r+1)k\Big)\over\Gamma\Big({r(n-2)+n\over 2}+rk\Big)}.
\end{align*}
\end{itemize}
\end{theorem}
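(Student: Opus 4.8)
The plan is to establish the integer moments by a geometric reduction and then to reach arbitrary real exponents through an explicit distributional representation, using Theorem~\ref{theo:vol_Parallelotopes_Moments} as the template for the ``product'' part and isolating the additional $\Gamma$-factors as the contribution of the centroid, i.e.\ of passing from the linear span (parallelotope) to the affine span (simplex). Throughout I treat all four models simultaneously, since the only model-dependent input is the radial density $f$, and I exploit its rotational invariance at every step.

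For an integer $k\ge 0$ I would start from
\[
\E[(r!\cV_{n,r})^{2k}]=\int_{(\R^n)^{r+1}}(r!\,\Delta_r(x_1,\dots,x_{r+1}))^{2k}\prod_{i=1}^{r+1}f(|x_i|)\,dx_1\cdots dx_{r+1},
\]
where $\Delta_r$ is the $r$-volume of $\conv(x_1,\dots,x_{r+1})$. The central tool is the affine Blaschke-Petkantschin formula, which rewrites this as an integral over the affine Grassmannian $A(n,r)$ of the inner ``shape'' integral, the weight being raised to $(r!\Delta_r)^{2k+(n-r)}$ and a normalizing constant $b_{n,r}$ appearing. Writing a flat as $E=L+p$ with $L\in G(n,r)$ and $p\in L^\perp$, and using rotational invariance together with $|x_i|^2=|p|^2+|y_i|^2$ for $x_i=p+y_i$, $y_i\in L$, the integral factorizes into a full-dimensional $r$-simplex integral over $y_1,\dots,y_{r+1}\in L\cong\R^r$ with exponent raised by $n-r$, and a radial integral over the offset $p\in\R^{n-r}$; the Grassmannian integral contributes only $b_{n,r}$, as by rotation invariance the inner integral does not depend on $L$.

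The full-dimensional base computation in $\R^r$ is the heart of the matter. Here I would split each configuration into its centroid and its centred shape: the centred shape is treated by a Gram-determinant (Cauchy-Binet) computation together with a Selberg/Wishart-type product integral, exactly as for parallelotopes, which reproduces the product $\prod_{j=1}^r[\dots]$ and is also what makes the natural induction in $r$ (splitting off the last vertex as base times height) build up one factor per coordinate $j$. The centroid degree of freedom, which is absent for parallelotopes, produces the extra factor: the power $(r+1)^k$ in the Gaussian case and the corresponding $\Gamma$-ratios in the three beta-type cases. Finally, the radial integral over $|p|$ is, for each of the four densities, a Beta or Gamma integral evaluating to a ratio of $\Gamma$-functions; matching these against Theorem~\ref{theo:vol_Parallelotopes_Moments} and the base computation yields precisely the stated formulas for integer $k$.

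To reach all admissible real $k$ I would \emph{not} argue by moment-determinacy, since the relevant product distributions can have too rapidly growing moments; instead I would record the distributional identity that the same reduction produces, namely that $(r!\cV_{n,r})^2$ equals in law an explicit product of independent Beta-, Gamma- and Beta-prime-distributed random variables (one factor per coordinate $j=1,\dots,r$, together with the centroid factor). Each factor has an explicit real-order moment equal to the corresponding $\Gamma$-ratio, valid for $k\ge0$ and only for $0\le k<\nu/2$ in the Beta prime model, where the Beta-prime factor has heavy tails; multiplicativity over the independent factors then gives the formulas for all real $k$. I expect the main obstacle to be bookkeeping rather than conceptual: carrying the constant $b_{n,r}$ and all Jacobians through the Blaschke-Petkantschin reduction so that the offset and centroid integrals combine into exactly the advertised $\Gamma$-factors, and correctly pinning down the parameters of the independent factors in the distributional identity (in particular the large combined argument $r(n+\nu-2)+(n+\nu)$ in the Beta model and its analogue in the spherical model).
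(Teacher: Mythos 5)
Your plan is the direct derivation that the paper only gestures at: the paper's own proof of Theorem~\ref{theo:vol_Simplices_Moments} is a citation proof, quoting (a), (b), (c) from Miles (1971) (his Eqs.~(70), (74), (72), with a typo corrected in Chu), obtaining (d) from (b) in the limit $\nu\downarrow 0$, and delegating the extension from integer to real moments to the cited work of Kabluchko, Temesv\'ari and Th\"ale. The text preceding the theorem explicitly notes that a direct proof via the affine Blaschke--Petkantschin formula and induction is possible (as in the proof of Theorem~8.2.3 in Schneider--Weil for the special cases $\nu=2$ and spherical), and your integer-moment reduction --- parallelotope product from the shape, extra $\Gamma$-ratios from the affine degree of freedom --- is a faithful sketch of that route. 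So your approach is genuinely different from the paper's and in outline sound; what it buys is self-containedness, at the cost of the bookkeeping you yourself flag.

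There are, however, three concrete gaps. First, your mechanism for reaching real $k$ is wrong as stated outside the Gaussian case: $(r!\cV_{n,r})^2\eqdistr (r+1)\prod_j\chi^2_{n-r+j}$ does hold for the Gaussian model, but in the Beta and spherical models the geometric reduction does \emph{not} produce $(r!\cV_{n,r})^2$ as a product of independent named factors --- the offset $h$ of the flat couples a $(1-h^2)$-rescaling to the simplex inside the flat, and what comes out is the entangled identity of the paper's Theorem~\ref{theo:vol_distr_affine}(b),(d), namely $\xi(1-\xi)^r(r!\cV_{n,r})^2\eqdistr(1-\eta)^r\prod_j\beta_{{n-r+j\over 2},{\nu+r-j\over 2}}$ with $\cV_{n,r}$ still multiplied by an independent variable on the left. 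Correspondingly, the extra Mellin factor $\frac{\Gamma(a)}{\Gamma(a+k)}\frac{\Gamma(c+(r+1)k)}{\Gamma(c+rk)}$ (with $a=\frac{n+\nu}{2}$, $c=\frac{r(n+\nu-2)+(n+\nu)}{2}$) has $\Gamma$-arguments with slopes $r$ and $r+1$ in $k$ and is not the moment function of any single Beta, Gamma or Beta-prime variable; only a rewriting via the Gauss multiplication theorem turns it into a product of unit-slope ratios, and that is a Mellin-level manipulation, not an output of the geometry. The repair is cheap: either divide the Mellin transforms of the two sides of the entangled identity, or --- simpler --- observe that the Blaschke--Petkantschin formula is an identity for arbitrary nonnegative measurable integrands, so the entire computation can be run at real $k$ from the start, making the integer-then-extend scaffolding unnecessary. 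Second, this repair is not optional for the Beta-prime model: for $\nu\le 2$ the only admissible exponents are $0\le k<\nu/2$, so there may be \emph{no} positive integer moments and your step one cannot even begin there; you must work at real $k$ directly (or transfer from the Beta model by projective duality). Third, the spherical model has no Lebesgue density on $\R^n$, so your ``all four models simultaneously via the radial density $f$'' framing breaks for it; handle it as the weak limit $\nu\downarrow 0$ of the Beta model, exactly as the paper does.
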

\begin{proof}
Formula (a) is Equation (70) in \cite{Miles71}. Formula (b) is Equation (74) in \cite{Miles71}.  Formula (c) is Equation (72) in \cite{Miles71}. Finally, Formula (d) is obtained from (b) by letting $\nu \to 0$. Note that the formula in \cite{Miles71} contains a typo, which is corrected, for example, in \cite{Chu}. Also Miles~\cite{Miles71} considers only integer moments. Extension to non-integer moments can be found in \cite{KabluchkoTemesvariThaele}.
\end{proof}

Observe that the moments in the spherical case can be obtained from the moments in the Beta model by taking $\nu=0$ there. In fact, the uniform distribution on the sphere is the weak limit of the Beta distribution as $\nu \downarrow 0$; see the proof of Theorem~\ref{theo:distance_distr}, below. Since the proofs of our limit theorems are based on the above formulas for the moments, we may and will consider the spherical and the Beta models together, the former being the special case of the latter with $\nu=0$.
We refrain from stating the limit theorems in the Beta prime case because they seem similar to the Beta case.

\subsection{Distributions for the volumes of random simplices and parallelotopes}
The purpose of this section is to derive probabilistic representations  for the random variables $\cW_{n,r}^2$ and $\cV_{n,r}^2$ for the four models introduced in Section \ref{sec:SectionModels}. For this, we need to recall certain standard distributions.
A random variable has a Gamma distribution with shape $\alpha>0$ and scale $\lambda>0$ if its density is given by
$$
g(t) = \frac{\lambda^{\alpha}}{\Gamma(\alpha)} t^{\alpha-1} \eee^{-\lambda t}, \quad t\geq 0.
$$
Especially if $\alpha={d/2}$ for some $d\in\N$ and $\lambda={1/ 2}$, we speak about a $\chi^2$ distribution with $d$ degrees of freedom. A random variable has a Beta distribution with parameters $\alpha_1>0, \alpha_2>0$ if its density is
$$
g(t) = \frac{\Gamma(\alpha_1+\alpha_2)}{\Gamma(\alpha_1)\Gamma(\alpha_2)} t^{\alpha_1-1} (1-t)^{\alpha_2-1}, \quad t\in (0,1).
$$
Finally, a random variable has a Beta prime distribution with parameters $\alpha_1>0$, $\alpha_2>0$ if its density is
$$
g(t) = \frac{\Gamma(\alpha_1+\alpha_2)}{\Gamma(\alpha_1)\Gamma(\alpha_2)} t^{\alpha_1-1} (1+t)^{-\alpha_1-\alpha_2}, \quad t > 0.
$$
Note that the Beta prime distribution coincides, up to rescaling, with the Fisher F distribution.
We agree to denote by $\chi^2_d$, respectively $\Gamma_{\alpha, \lambda}, \beta_{\alpha_1,\alpha_2}, \beta'_{\alpha_1,\alpha_2}$,  a random variable with $\chi^2$-distribution with $d\in\N$ degrees of freedom and the Gamma, Beta or Beta prime distribution with corresponding parameters, respectively. We shall also use the notation $X\sim\Beta(\alpha_1,\alpha_2)$ or $X\sim\Beta'(\alpha_1,\alpha_2)$ to indicate that a random variable $X$ has a Beta or a Beta prime distribution with parameters $\alpha_1$ and $\alpha_2$, respectively. Also, we agree that all such variables are assumed to be independent. We recall that the moments (of real order $k\geq 0$, as long as they exist) of these distributions are given by:
$$
\E [\chi_d^{2k}] = 2^k \frac{\Gamma\Big(\frac d2 + k\Big)}{\Gamma\Big(\frac d2\Big)},
\quad
\E [\beta_{\alpha_1,\alpha_2}^k] = \frac{\Gamma(\alpha_1+\alpha_2)\Gamma(\alpha_1+k)}{\Gamma(\alpha_1) \Gamma(\alpha_1+\alpha_2+k)},
\quad
\E [(\beta_{\alpha_1,\alpha_2}')^{k}] = \frac{\Gamma(\alpha_1+k)\Gamma(\alpha_2-k)}{\Gamma(\alpha_1) \Gamma(\alpha_2)}.
$$
Using Theorem \ref{theo:vol_Parallelotopes_Moments} we first obtain probabilistic representations for the volume of random parallelotopes spanned by vectors whose distributions belong to one of the classes introduced in Section \ref{sec:SectionModels}.

\begin{theorem}[Distributions for parallelotopes]\label{theo:vol_distr_linear}
Let $\mathcal W_{n,r}$ be the volume of the $r$-dimensional parallelotope spanned by the vectors $X_1,\ldots,X_{r}$ chosen according to one of the above four models.
\begin{itemize}
\item[(a)] In the Gaussian model we have $\mathcal W_{n,r}^2 \eqdistr \prod\limits_{j=1}^{r} \chi^2_{n-r+j}$.
\item[(b)] In the Beta model we have $\mathcal W_{n,r}^2 \eqdistr   \prod\limits_{j=1}^r \beta_{{n-r+j\over 2}, {\nu + r -j\over 2}}$.
\item[(c)] In the Beta prime model we have $\mathcal W_{n,r}^2 \eqdistr  \prod\limits_{j=1}^r \beta'_{{n-r+j\over 2}, {\nu\over 2}}$.
\item[(d)] In the spherical model we have $\mathcal W_{n,r}^2 \eqdistr \prod\limits_{j=1}^r \beta_{{n-r+j\over 2}, {r -j\over 2}}$.
\end{itemize}
The random variables involved in the products are assumed to be independent.
\end{theorem}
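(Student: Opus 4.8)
The plan is to prove all four representations by the method of moments, feeding the formulas of Theorem~\ref{theo:vol_Parallelotopes_Moments} into the moment formulas for the $\chi^2$, Beta and Beta prime laws recorded above. Because each product on the right-hand side is built from independent factors, its $k$-th moment is the product of the moments of the individual factors, so it suffices to check, for every admissible real $k$, that this factorwise product coincides with the closed form given by Theorem~\ref{theo:vol_Parallelotopes_Moments}. This step is pure $\Gamma$-function bookkeeping and I would carry it out model by model. For the Beta model, for instance, I would put $\alpha_1=\tfrac{n-r+j}{2}$ and $\alpha_2=\tfrac{\nu+r-j}{2}$, note that $\alpha_1+\alpha_2=\tfrac{n+\nu}{2}$ is independent of $j$, so that $\E[\beta_{\alpha_1,\alpha_2}^k]=\Gamma(\tfrac{n+\nu}{2})\Gamma(\tfrac{n-r+j}{2}+k)\big/\big(\Gamma(\tfrac{n-r+j}{2})\Gamma(\tfrac{n+\nu}{2}+k)\big)$, and the product over $j$ is exactly the expression in part~(b). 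The Gaussian and Beta prime cases are entirely analogous, and the spherical case~(d) is the specialization $\nu=0$ of~(b); there the factor with $j=r$ becomes $\beta_{n/2,0}$, which is the constant $1$, consistently with its moments all being equal to $1$.

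Matching moments is of course not yet a proof of equality in distribution, and the second, decisive step is a determinacy argument. I would split the four models according to whether the support is bounded. In the Beta and spherical models~(b),(d) each factor lies in $(0,1)$, hence $\cW_{n,r}^2$ and its proposed representation are both supported in $(0,1)$; a bounded positive random variable is uniquely determined by its integer moments via the Hausdorff moment problem, so here the moment identity closes the argument immediately.

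The \emph{main obstacle} is determinacy in the Gaussian and Beta prime models~(a),(c), whose supports are unbounded: the moments grow roughly like $(k!)^r$, so Carleman's condition fails as soon as $r\ge 3$ and the raw moment sequence does not by itself determine the law. To circumvent this I would pass to the logarithm and work with the moment generating function of $\log\cW_{n,r}^2$. The point is that the Gamma-ratio formula of Theorem~\ref{theo:vol_Parallelotopes_Moments} is the restriction to $k\ge 0$ of a function that stays finite for small negative $k$ as well -- indeed $\E[(\chi^2_d)^k]=2^k\Gamma(\tfrac d2+k)/\Gamma(\tfrac d2)$ is finite for $k>-\tfrac d2$, and $\E[(\beta'_{\alpha_1,\alpha_2})^k]$ is finite for $-\alpha_1<k<\alpha_2$ -- so that both $\cW_{n,r}^2$ and the candidate product possess finite fractional moments of every order $k$ in a two-sided neighbourhood of the origin (namely $k>-\tfrac{n-r+1}{2}$ in case~(a), and $-\tfrac{n-r+1}{2}<k<\tfrac\nu2$ in case~(c)), with identical values throughout. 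Hence $t\mapsto\E[e^{t\log\cW_{n,r}^2}]$ is finite and agrees with its counterpart on an open interval around $t=0$, and since a law is determined by a moment generating function that is finite near the origin, the distributions of $\log\cW_{n,r}^2$ and of the logarithm of the candidate product coincide, giving the claimed equality for $\cW_{n,r}^2$. The one technical point to secure here is the finiteness of the small negative moments of $\cW_{n,r}^2$ itself; I would obtain these either by re-inspecting the Blaschke--Petkantschin integrals underlying Theorem~\ref{theo:vol_Parallelotopes_Moments}, which converge in that range, or, in the Gaussian case, directly from the decomposition below.

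As an independent cross-check of~(a) I would also record the Gram--Schmidt (QR) argument, which is moment-free: projecting $X_j$ onto the orthogonal complement of $\Span(X_1,\dots,X_{j-1})$ splits off independent factors, the squared length of the residual component of $X_j$ being $\chi^2_{n-j+1}$-distributed, so that $\cW_{n,r}^2=\prod_{i=1}^r\chi^2_{n-i+1}=\prod_{j=1}^r\chi^2_{n-r+j}$ directly, and the same decomposition exhibits the range of finite negative moments used above.
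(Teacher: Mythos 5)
Your proposal is correct, and its core --- identifying the moments of $\cW_{n,r}^2$ from Theorem~\ref{theo:vol_Parallelotopes_Moments} with the factorwise moments of the candidate products --- is exactly the route the paper takes: the paper in fact states Theorem~\ref{theo:vol_distr_linear} with no proof at all, presenting it as an immediate consequence of the moment formulas for the $\chi^2$, Beta and Beta prime laws recorded just before it. What you add, and what the paper leaves entirely implicit, is the moment-determinacy step. Your split is the right one: in the bounded cases (b), (d) the Hausdorff moment problem closes the argument from integer moments alone (and your observation that the $j=r$ factor in (d) degenerates to the constant $1$ is a worthwhile sanity check), while in (a), (c) the concern is genuine --- the moments grow like $(k!)^r$, Carleman's condition fails for $r\geq 3$, and indeed products of three or more exponential-type factors are classically moment-indeterminate, so a bare moment comparison would be an actual gap rather than a pedantic one. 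Your repair via the moment generating function of $\log \cW_{n,r}^2$ on a two-sided neighbourhood of the origin is sound; note only that you need either to verify the Gamma-ratio formula at small negative $k$ directly (your Blaschke--Petkantschin re-inspection, or the Bartlett decomposition in case (a), both deliver this), or, slightly more economically, to use that finiteness of both MGFs on an open interval around $0$ makes them analytic there, so agreement for $t\geq 0$ (which requires reading Theorem~\ref{theo:vol_Parallelotopes_Moments} for \emph{real} $k\geq 0$, as the paper's sources support) propagates to the whole interval by the identity theorem. Finally, your Gram--Schmidt cross-check of (a) is the classical Bartlett decomposition and gives a moment-free proof of that case outright; the paper does not mention it, and it is arguably the cleanest argument available for the Gaussian model.
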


The distribution of the volume of a random simplex generated by one of the four models is more involved and can be derived from Theorem \ref{theo:vol_Simplices_Moments}.

\begin{theorem}[Distributions for simplices]\label{theo:vol_distr_affine}
Let $\mathcal V_{n,r}$ be the volume of the $r$-dimensional simplex with vertices $X_1,\ldots,X_{r+1}$ chosen according to the one of the above four models.
\begin{itemize}
\item[(a)] In the Gaussian model we have $(r!\mathcal V_{n,r})^2 \eqdistr (r+1) \prod\limits_{j=1}^{r} \chi^2_{n-r+j}$.
\item[(b)] In the Beta model we have
$\xi(1-\xi)^r  (r!\mathcal V_{n,r})^2 \eqdistr (1-\eta)^r  \prod\limits_{j=1}^r \beta_{{n-r+j\over 2}, {\nu + r -j\over 2}}$, where $\xi,\eta\sim \text{Beta}(\frac{n+\nu}{2}, \frac{r(n+\nu-2)}{2})$ are random variables such that $\xi$ is independent of $\mathcal V_{n,r}$, while $\eta$ is independent of $\beta_{{n-r+j\over 2}, {\nu + r -j\over 2}}$, $j=1,\ldots,r$.
\item[(c)] In the Beta prime model we have $(1+\eta)^r  (r!\mathcal V_{n,r})^2 \eqdistr   \xi^{-1}(1+\xi)^{r+1}  \prod\limits_{k=1}^r \beta'_{{n-r+j\over 2}, {\nu\over 2}}$, where
$\xi,\eta\sim \text{Beta}'(\frac{\nu}{2}, \frac{r\nu}{2})$ are random variables such that $\eta$ is independent of $\mathcal V_{n,r}$, while $\xi$ is independent of $\beta'_{{n - r +j\over 2}, {\nu\over 2}}$, $j=1,\ldots,r$.
\item[(d)] In the spherical model we have $\xi(1-\xi)^r  (r!\mathcal V_{n,r})^2 \eqdistr  (1-\eta)^r  \prod\limits_{j=1}^r \beta_{{n-r+j\over 2}, {r -j\over 2}}$, where $\xi,\eta\sim \text{Beta}(\frac{n}{2}, \frac{r(n-2)}{2})$ are random variables such that $\xi$ is independent of $\mathcal V_{n,r}$, while $\eta$ is independent of $\beta_{{n-r+j\over 2}, {r -j\over 2}}$, $j=1,\ldots,r$.
\end{itemize}
\end{theorem}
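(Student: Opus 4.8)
The plan is to establish each of the four distributional identities by the method of moments: I would compute the real moments of both sides of the asserted identity, using independence together with the moment formulas recalled just before the statement, match them against the simplex moment formulas of Theorem~\ref{theo:vol_Simplices_Moments}, and then upgrade equality of moments to equality in law by an appropriate uniqueness statement.

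Take the Beta model (b) as the representative case. Writing $a=\frac{n+\nu}{2}$, $b=\frac{r(n+\nu-2)}{2}$ and $c=a+b=\frac{r(n+\nu-2)+(n+\nu)}{2}$, the independence of $\xi$ and $\cV_{n,r}$ factorises the $k$-th moment of the left-hand side as $\E[\xi^k(1-\xi)^{rk}]\,\E[(r!\cV_{n,r})^{2k}]$, and the Beta-integral identity gives $\E[\xi^k(1-\xi)^{rk}] = \Gamma(a+k)\Gamma(b+rk)\Gamma(c)/(\Gamma(a)\Gamma(b)\Gamma(c+(r+1)k))$. Inserting the expression from Theorem~\ref{theo:vol_Simplices_Moments}(b), the factor $\Gamma(c+(r+1)k)$ cancels against the final $\Gamma$-quotient there. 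On the right-hand side, the \emph{key structural observation} is that each Beta factor has shape parameters summing to $\frac{n-r+j}{2}+\frac{\nu+r-j}{2}=a$, so $\prod_{j=1}^r\E[\beta_{\frac{n-r+j}{2},\frac{\nu+r-j}{2}}^k] = \Gamma(a)^r\Gamma(a+k)^{-r}\prod_{j=1}^r \Gamma(\frac{n-r+j}{2}+k)/\Gamma(\frac{n-r+j}{2})$, while $\eta\sim\Beta(a,b)$ gives $\E[(1-\eta)^{rk}] = \Gamma(b+rk)\Gamma(c)/(\Gamma(b)\Gamma(c+rk))$. A short comparison shows the two sides produce the same expression. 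The spherical case (d) is the literal specialisation $\nu=0$, and the Gaussian case (a) is immediate, since $(r+1)\prod_j\chi^2_{n-r+j}$ has $k$-th moment $(r+1)^k\prod_j 2^k\Gamma(\frac{n-r+j}{2}+k)/\Gamma(\frac{n-r+j}{2})$, matching Theorem~\ref{theo:vol_Simplices_Moments}(a) verbatim.

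It then remains to pass from equal moments to equal distributions. In the Beta and spherical models both sides are products of $[0,1]$-valued variables, hence bounded and determined by their moments, so these cases are complete. In the Gaussian model the variables are unbounded, so instead of relying on determinacy I would read the common moment formula $k\mapsto\E[Y^k]$ as a Mellin transform, note that it is an explicit product of $\Gamma$-quotients and hence analytic in a vertical strip of $\C$, and conclude equality in law by uniqueness of the Mellin transform for positive random variables (which requires no Carleman-type growth condition).

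The main obstacle is the Beta prime model (c). The algebra again closes: with $\xi,\eta\sim\Beta'(\frac\nu2,\frac{r\nu}{2})$ one checks $\E[(1+\eta)^{rk}]=\Gamma(\frac{r\nu}{2}-rk)\Gamma(\frac{(r+1)\nu}{2})/(\Gamma(\frac{r\nu}{2})\Gamma(\frac{(r+1)\nu}{2}-rk))$ and $\E[\xi^{-k}(1+\xi)^{(r+1)k}] = B(\frac\nu2-k,\frac{r\nu}{2}-rk)/B(\frac\nu2,\frac{r\nu}{2})$, and substituting Theorem~\ref{theo:vol_Simplices_Moments}(c) the factors $\Gamma(\frac{(r+1)\nu}{2}-rk)$ cancel to leave identical expressions. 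The difficulty is that the moments exist only for $0\le k<\frac\nu2$, the laws are heavy-tailed, and they are \emph{not} determined by their moments. The argument must therefore again be routed through the Mellin transform: both sides are positive random variables whose Mellin transforms are explicit $\Gamma$-quotients, defined and analytic on the common strip $0\le\Re k<\frac\nu2$; agreement on the real segment forces agreement throughout by analytic continuation, and Mellin inversion yields the identity in law. Care is needed to track the admissible range of $k$ at every step, so that all $\Gamma$-arguments stay positive and all Beta integrals converge, and in particular to verify that the $\xi^{-1}$ and $(1+\xi)^{r+1}$ weights never push an exponent out of the convergence region.
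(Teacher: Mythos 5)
Your proposal is correct and takes essentially the same route as the paper: both proofs establish the identities by matching moments, using exactly the same Beta and Beta-prime integral computations for $\E[\xi^k(1-\xi)^{rk}]$, $\E[(1-\eta)^{rk}]$, $\E[\xi^{-k}(1+\xi)^{(r+1)k}]$ and $\E[(1+\eta)^{rk}]$ against the formulas of Theorem~\ref{theo:vol_Simplices_Moments}. The only differences are refinements on your side, both sound: you make explicit the passage from equality of real-order moments to equality in law (bounded support in the Beta/spherical cases, Mellin-transform uniqueness on a strip in the Gaussian and Beta prime cases, where moment determinacy is genuinely unavailable), a step the paper leaves tacit, and you obtain (d) by direct specialisation $\nu=0$ in the moment formulas rather than via the weak limit $\nu\downarrow 0$ used in the paper.
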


\begin{proof}
The assertion in (a) follows directly from Theorem \ref{theo:vol_Simplices_Moments} (a) combined with the fact that the $k$th moment of a $\chi_{n-r+j}^2$ random variable is given by
$$
2^k{\Gamma({n-r+j\over 2}+k)\over \Gamma({n-r+j\over 2})}.
$$

To prove (b) we define $\alpha_1:={n+\nu\over 2}$ and $\alpha_2:={r(n+\nu-2)\over 2}$. Denoting by $B(x,y)=\frac{\Gamma(x)\Gamma(y)}{\Gamma(x+y)}$, $x,y>0$, the Beta function, we observe that, since $\xi,\eta\sim\Beta(\alpha_1,\alpha_2)$,
$$
\E[(1-\eta)^{rk}] = {1\over B(\alpha_1,\alpha_2)}\int_0^1 x^{\alpha_1-1}(1-x)^{\alpha_2+rk-1}\,\dd x = {B(\alpha_1,\alpha_2+rk)\over B(\alpha_1,\alpha_2)}
$$
and
$$
\E[\xi^k(1-\xi)^{rk}] = {1\over B(\alpha_1,\alpha_2)}\int_0^1 x^{\alpha_1+k-1}(1-x)^{\alpha_2+rk-1}\,\dd x = {B(\alpha_1+k,\alpha_2+rk)\over B(\alpha_1,\alpha_2)}\,.
$$
This implies that
\begin{align*}
\frac {\E[(1-\eta)^{rk}]}{\E[\xi^k(1-\xi)^{rk}]} &= \frac {B(\alpha_1,\alpha_2+rk)}{B(\alpha_1+k,\alpha_2+rk)} = {\Gamma(\alpha_1+\alpha_2+(r+1)k)\Gamma(\alpha_1)\over\Gamma(\alpha_1+k)\Gamma(\alpha_1+\alpha_2+rk)}\\
&={\Gamma\Big({r(n+\nu-2)+(n+\nu)\over 2}+(r+1)k\Big)\Gamma\Big({n+\nu\over 2}\Big)\over\Gamma\Big({n+\nu\over 2}+k\Big)\Gamma\Big({r(n+\nu-2)+(n+\nu)\over 2}+rk\Big)}
\end{align*}
and this is precisely the last factor in the formula for the moments, see Theorem \ref{theo:vol_Simplices_Moments} (b).

Next, we consider (c). Since $\xi,\eta\sim \text{Beta}'(\alpha_1, \alpha_2)$ with $\alpha_1 = \frac{\nu}{2}$ and $\alpha_2 = \frac{r\nu}{2}$, we apply the formula $\int_0^{\infty} x^{\alpha_1-1} (1+x)^{-\alpha_1-\alpha_2} \dd x = B(\alpha_1,\alpha_2)$ to obtain
$$
\E [(1+\eta)^{rk}] = \frac 1 {B(\alpha_1,\alpha_2)} \int_0^\infty x^{\alpha_1-1}(1+x)^{-\alpha_1-(\alpha_2-rk)}\dd x
= \frac{B(\alpha_1, \alpha_2 - rk)}{B(\alpha_1,\alpha_2)}
$$
and
$$
\E \Big[\xi^{-k}(1+\xi)^{(r+1)k}\Big]
=
\frac 1 {B(\alpha_1,\alpha_2)}\int_0^\infty x^{\alpha_1-k-1}(1+x)^{-\alpha_1-\alpha_2 - (r+1)k} \dd x
=
\frac{B(\alpha_1 - k, \alpha_2 - rk)}{B(\alpha_1, \alpha_2)}.
$$
It follows that
\begin{align*}
\frac{\E \Big[\xi^{-k}(1+\xi)^{(r+1)k}\Big]}{\E [(1+\eta)^{rk}]}
&=
\frac{B(\alpha_1 - k, \alpha_2 - rk)}{B(\alpha_1, \alpha_2 - rk)}=
\frac{\Gamma(\alpha_1 - k)\Gamma(\alpha_1+\alpha_2 -rk)}{\Gamma(\alpha_1+\alpha_2 - (r+1) k) \Gamma(\alpha_1)}\\
&=
\frac{\Gamma\Big(\frac \nu 2 -k \Big)}{\Gamma\Big(\frac \nu2\Big)}
\frac{\Gamma\Big( \frac{(r+1)\nu}{2} -rk \Big)}{\Gamma\Big( \frac{(r+1)\nu}{2} - (r+1)k \Big)},
\end{align*}
which is exactly the last factor in the formula for the moments given by Theorem \ref{theo:vol_Simplices_Moments} (c).

The assertion in (d) follows as a limit case from that in (b), as $\nu\downarrow 0$.
\end{proof}

\begin{remark}
The distributional equality in Theorem \ref{theo:vol_distr_affine} (a) has already been noted by Miles, see \cite[Section 13]{Miles71}. The other probabilistic representations in (b)--(d) seem to be new.
\end{remark}

\subsection{Distance distributions}
As in the previous sections let $X_1,\ldots,X_{r+1}$ be independent random points that are distributed according to one of the four models from Section \ref{sec:SectionModels}. Our interest now lies in the distance from the origin to the $r$-dimensional affine subspace spanned by $X_1,\ldots,X_{r+1}$.

\begin{theorem}[Distance distributions]\label{theo:distance_distr}
Let $X_1,\ldots,X_{r+1}$ be chosen according to one of the above four models and denote by $\cD_{n,r}$ the distance from the origin to the $r$-dimensional affine subspace spanned by $X_1,\ldots,X_{r+1}$.
\begin{itemize}
\item[(a)] In the Gaussian model we have $\cD_{n,r}^2\eqdistr(r+1)^{-1}\chi_{n-r}^2$.
\item[(b)] In the Beta model we have $\cD_{n,r}^2\eqdistr \beta_{{n-r\over 2},{\nu(r+1)+r(n-1)\over 2}}$.
\item[(c)] In the Beta prime model we have $\cD_{n,r}^2\eqdistr \beta'_{{n-r\over 2},{\nu(r+1)\over 2}}$.
\item[(d)] In the spherical model we have $\cD_{n,r}^2\eqdistr \beta_{{n-r\over 2},{r(n-1)\over 2}}$.
\end{itemize}
\end{theorem}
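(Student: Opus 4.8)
The plan is to compute the Lebesgue density of $\cD_{n,r}$ directly and read off the claimed laws. The spherical model (d) will be obtained at the very end as the weak limit $\nu\downarrow 0$ of the Beta model (b)---exactly as the uniform law on the sphere is the weak limit of the Beta law---so it suffices to treat the Gaussian, Beta and Beta prime models. As a conceptual guide, decomposing the $(r+1)$-dimensional simplex $\conv(0,X_1,\dots,X_{r+1})$ by base times height yields the pathwise identity $\cW_{n,r+1}^2=\cD_{n,r}^2\,(r!\,\cV_{n,r})^2$, which links $\cD_{n,r}$ to the volumes of Theorems~\ref{theo:vol_distr_linear} and~\ref{theo:vol_distr_affine}. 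However, $\cD_{n,r}$ and $\cV_{n,r}$ are built from the same points and, except in the Gaussian case, are dependent, so merely dividing the two moment formulas does not isolate the law of $\cD_{n,r}$; I would therefore obtain its density head-on.

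First I would apply the affine Blaschke-Petkantschin formula to the joint density $\prod_{i=1}^{r+1} f(|x_i|)$, rewriting the integral over $(\R^n)^{r+1}$ as an integral over the direction $L\in G(n,r)$ of the affine hull, the foot point $p\in L^\perp$ of the perpendicular from the origin, and the in-flat positions $y_1,\dots,y_{r+1}\in L\cong\R^r$, at the cost of the Jacobian $\Delta_r(y_1,\dots,y_{r+1})^{\,n-r}$, where $\Delta_r$ is the $r$-volume of $\conv(y_1,\dots,y_{r+1})$. Since the origin, $p$ and each $x_i$ form a right angle, Pythagoras gives $|x_i|^2=\cD_{n,r}^2+|y_i|^2$ with $\cD_{n,r}=|p|$. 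Writing $p=\cD_{n,r}\,\omega$ with $\omega$ on the unit sphere of $L^\perp$ produces the radial Jacobian $\cD_{n,r}^{\,n-r-1}$, and by rotation invariance the integrals over $L$ and $\omega$ contribute only a constant. This leaves a density for $\cD_{n,r}$ proportional to $\cD_{n,r}^{\,n-r-1}\int \prod_{i}f\big(\sqrt{\cD_{n,r}^2+|y_i|^2}\big)\,\Delta_r(y)^{\,n-r}\,\mathrm dy$.

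The key step is to evaluate the $y$-integral through the substitution $y_i=\rho(\cD_{n,r})\,z_i$, with $\rho\equiv 1$ in the Gaussian model, $\rho=\sqrt{1-\cD_{n,r}^2}$ in the Beta model, and $\rho=\sqrt{1+\cD_{n,r}^2}$ in the Beta prime model. In each case the radial profile factorizes ($e^{-\cD^2/2}e^{-|y_i|^2/2}$; $(1-\cD^2)^{(\nu-2)/2}(1-|z_i|^2)^{(\nu-2)/2}$; $(1+\cD^2)^{-(n+\nu)/2}(1+|z_i|^2)^{-(n+\nu)/2}$), so that after substitution the remaining integral in $z$ is a finite constant independent of $\cD_{n,r}$, and all of the $\cD_{n,r}$-dependence collects into $\cD_{n,r}^{\,n-r-1}$ times an elementary factor: $e^{-(r+1)\cD^2/2}$, a power of $1-\cD^2$, or a power of $1+\cD^2$, respectively. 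Passing to $\cD_{n,r}^2$ and counting exponents then exhibits its density as that of $\Gamma_{(n-r)/2,\,(r+1)/2}$ (equivalently $(r+1)^{-1}\chi^2_{n-r}$) in case (a), of $\beta_{(n-r)/2,\,(\nu(r+1)+r(n-1))/2}$ in case (b), and of $\beta'_{(n-r)/2,\,\nu(r+1)/2}$ in case (c); letting $\nu\downarrow 0$ in (b) gives (d).

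I expect the bookkeeping to be the only real work: getting the normalization and the Jacobian exponent $n-r$ in the affine Blaschke-Petkantschin formula exactly right, then carefully tracking the powers of $\rho(\cD_{n,r})$ arising from the $r+1$ density factors, the $r+1$ Jacobians $\mathrm dy_i=\rho^{\,r}\mathrm dz_i$, and the factor $\Delta_r(y)^{n-r}=\rho^{\,r(n-r)}\Delta_r(z)^{n-r}$, and matching the resulting exponent of $1\mp\cD_{n,r}^2$ to the second Beta (respectively Beta prime) parameter. Minor additional points are the finiteness of the $z$-integral in the admissible Beta prime range and the justification of the $\nu\downarrow 0$ limit for the spherical model, both of which are routine.
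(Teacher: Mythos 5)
Your proposal is correct, but it follows a genuinely different route from the paper: the paper does not derive the densities at all, instead reading them off from a formula on page~16 of Ruben--Miles \cite{RubenMiles80} and only proving in detail the tightness/weak-limit argument for $\nu\downarrow 0$ in case (d). What you propose is, in effect, a self-contained proof of the imported ingredient: the affine Blaschke--Petkantschin decomposition into direction $L\in G(n,r)$, foot point $p=\cD_{n,r}\,\omega\in L^\perp$ and in-flat coordinates, followed by the scaling substitution $y_i=\rho(\cD_{n,r})z_i$, and your bookkeeping checks out in all three cases. In the Beta model the total exponent of $1-\cD_{n,r}^2$ is $\tfrac{(r+1)(\nu-2)}{2}+\tfrac{r(r+1)}{2}+\tfrac{r(n-r)}{2}=\tfrac{\nu(r+1)+r(n-1)}{2}-1$, and in the Beta prime model the exponent of $1+\cD_{n,r}^2$ is $-\tfrac{(n-r)+\nu(r+1)}{2}$; both agree with the densities quoted in the paper and, after the change of variables $T=\cD_{n,r}^2$, with the claimed Beta and Beta prime laws, while the Gaussian case gives $\cD_{n,r}^{n-r-1}\eee^{-(r+1)\cD_{n,r}^2/2}$ directly. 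Two small remarks: the finiteness of the constant $z$-integral in the Beta prime case needs no separate estimate, since by Tonelli the product of this constant with the (finite, positive) $\cD_{n,r}$-integral equals the total mass $1$; and your side identity $\cW_{n,r+1}=\cD_{n,r}\cdot r!\,\cV_{n,r}$ is correct and consistent with Theorems~\ref{theo:vol_Parallelotopes_Moments} and~\ref{theo:vol_Simplices_Moments} (dividing the moment formulas recovers exactly the moments of the laws in the statement), and you are right that outside the Gaussian model this alone cannot identify the law of $\cD_{n,r}$ because of dependence, so discarding it in favour of the direct density computation is the correct call. What your approach buys is independence from the external reference; what the paper's citation buys is brevity, with the $\nu\downarrow 0$ limit as the only step requiring an argument, and that step you handle the same way the paper does.
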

\begin{proof}
The density of $\cD_{n,r}$ in the cases (a)--(c) can be computed from a formula on page 16 in \cite{RubenMiles80}. In fact, for the Gaussian model we obtain that $\cD_{n,r}$ has density
$$
h\mapsto c_{n,r}\,h^{n-r-1}\,e^{-{h^2(r+1)\over 2}}\,,\qquad h>0\,,
$$
which implies (a). For the Beta model we obtain the density
$$
h\mapsto c_{n,r,\nu}\,h^{n-r-1}(1-h^2)^{{r(n+1)\over 2}+{(r+1)(\nu-2)\over 2}}\,,\qquad 0<h<1\,,
$$
for $\cD_{n,r}$ and (b) follows. Next, for the Beta prime model the density of $\cD_{n,r}$ is given by
$$
h\mapsto c_{n,r,\nu}\,h^{n-r-1}(1+h^2)^{{r(n+1)\over 2}-{(r+1)(n+\nu)\over 2}}\,,\qquad h>0\,,
$$
whence (c) follows. Finally, the spherical model follows from the Beta model in the limit, as $\nu\downarrow 0$. In fact, since the centred ball of radius $1$ can be regarded as a compact metric space, the family of probability measures $(\P_\nu)_{\nu>0}$ with densities $f_\nu(|x|):={\rm const}(1-|x|^2)^{(\nu-2)/2}$, $\nu>0$, is tight for each $n\in\N$. Thus, $(\P_\nu)_{\nu>0}$ is weakly sequentially compact, i.e., there exist weakly convergent subsequence $(\P_{\nu_n})_{n\in\N}$ with $\nu_n\downarrow 0$. For each such sequence $\nu_n$ the limiting probability measure is easily seen to have the following two properties: (i) it is rotation invariant and (ii) it is concentrated on the boundary of the centred ball of radius $1$, that is, the radius $1$ sphere. In other words, the limit must coincide with the normalized spherical Lebesgue measure on that sphere. Now, as $\nu\downarrow 0$ and since $(x_1,\ldots,x_{r+1})\mapsto \dist(0,\aff(x_1,\ldots,x_{r+1}))$ is a bounded continuous function on the $(r+1)$-st cartesian power of the unit ball, the density in (d) is the limit of the density in (b).
\end{proof}

\section{Cumulants, Berry-Esseen bounds and moderate deviations}

In this section we shall concentrate on the Gaussian, the Beta and the spherical model, for which the random variables $\cV_{n,r}$ have finite moments of all orders for any $n\in\N$ and $r\leq n$.

\subsection{Cumulants for logarithmic volumes}

For a random variable $X$ with $\E[|X|^m]<\infty$ for some $m\in\N$, we write $c^m[X]$ for the $m$th order cumulant of $X$, that is,
\begin{align}\label{DefinitionCumulant}
c^m[X] = (-\mathfrak{i})^{m}\,{\dd^m\over\dd  t^m}\log\bE[\exp(\mathfrak{i}tX)]\Big|_{t=0}\,,
\end{align}
where $\mathfrak{i}$ stands for the imaginary unit. It is well known that sharp bounds for cumulants lead to fine probabilistic estimates for the involved random variables. For the volume of a random simplex with Gaussian or Beta distributed vertices we shall establish the following cumulant bound. In what follows we shall write $a_n\sim b_n$ for two sequences $(a_n)_{n\in\N}$ and $(b_n)_{n\in\N}$ if $a_n/b_n\to 1$, as $n\to\infty$. Let us define the random variable $\cL_{n,r}:=\log(r!\cV_{n,r})$.

\begin{theorem}\label{thm:Cumulants}
Let $X_1,\ldots,X_{r+1}$ be chosen according to one of the four models presented in the previous section, and let $\alpha\in(0,1)$.
\begin{itemize}
\item[(a)] For the Gaussian model we have
$$
\E\cL_{n,r}\sim \frac{r}{2} \log{n},
\qquad\qquad
\Var\cL_{n,r} \sim \begin{cases}
{r\over 2n} &: r=o(n)\\
{\frac 12 \log \frac 1 {1-\alpha}}&: r\sim \alpha n\\
\frac{1}{2} \log \left(\frac{n}{n-r+1}\right) &: n-r = o(n)
\end{cases}
$$
and, for $m\geq 3$,
$$
|c^m(\cL_{n,r})|\leq \begin{cases}
 C^m (m-1)! rn^{1-m}  &: r=o(n) \text{ or } r\sim\alpha n\\
2\,(m-1)!&: \text{for arbitrary r(n)}\,,
\end{cases}
$$
where $C\in(0,\infty)$ is a constant not depending on $n$ and $m$.
\item[(b)] For the Beta model and the spherical model we have
\begin{align*}
\Var\cL_{n,r} &\sim 
\frac 12 \log \frac n{n-r} - \frac {r^2}{2n (r+1)}
\sim \begin{cases}
\frac {r}{2(r+1)n}&: r=o(\sqrt n)\\
\frac 12 \log \frac 1 {1-\alpha} -\frac \alpha 2&: r\sim \alpha n\\
\frac{1}{2} \log \left(\frac{n}{n-r+1}\right) &: n-r = o(n)
\end{cases}
\end{align*}
and, for all $m\geq 3$ and $n\geq 4$,
$$
|c^m(\cL_{n,r})|\leq \begin{cases}
C^m m!rn^{1-m} &: r=o(n)\text{ or }r\sim \alpha n\\
2 \cdot 4^m m! &: \text{for arbitrary r(n)}\,,
\end{cases}
$$
where $C\in(0,\infty)$ is a constant not depending on $n$ and $m$.
\end{itemize}
\end{theorem}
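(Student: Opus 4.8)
\emph{The plan} is to read off the cumulants of $\cL_{n,r}=\log(r!\cV_{n,r})$ from the distributional identities in Theorem~\ref{theo:vol_distr_affine} and then analyse them through the digamma and polygamma functions. Since cumulants are additive over independent summands, everything reduces to the cumulants of $\log\chi^2_d$ and $\log\beta_{a,b}$. Writing $\psi=\Gamma'/\Gamma$ and using the moment formulas recalled before Theorem~\ref{theo:vol_distr_linear}, the cumulant generating function of $\log\chi^2_d$ is $t\mapsto t\log 2+\log\Gamma(\tfrac d2+t)-\log\Gamma(\tfrac d2)$, so its cumulants of order $m\geq 2$ equal $\psi^{(m-1)}(\tfrac d2)$; similarly $\log\beta_{a,b}$ has $m$-th cumulant $\psi^{(m-1)}(a)-\psi^{(m-1)}(a+b)$ for $m\geq 2$. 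This is the computational heart of the argument, and I restrict throughout to the Gaussian, Beta and spherical models, for which all moments are finite.

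First I would treat the Gaussian case, where $(r!\cV_{n,r})^2\eqdistr(r+1)\prod_{j=1}^r\chi^2_{n-r+j}$ with independent factors. Taking logarithms and using additivity gives, for $m\geq 2$, the exact formula $c^m[\cL_{n,r}]=2^{-m}\sum_{j=1}^r\psi^{(m-1)}(\tfrac{n-r+j}{2})$, while the mean additionally picks up $\tfrac12\log(r+1)+\tfrac r2\log2$. For the Beta and spherical models the identity in Theorem~\ref{theo:vol_distr_affine}(b),(d) is not a plain product, so I would exploit that $\xi$ is independent of $\cV_{n,r}$ and $\eta$ is independent of the $\beta$-factors: equating the $m$-th cumulants of both sides of $\log\xi+r\log(1-\xi)+2\cL_{n,r}\eqdistr r\log(1-\eta)+\sum_j\log\beta_{\frac{n-r+j}{2},\frac{\nu+r-j}{2}}$ and inserting the Beta-moment computation already carried out in the proof of Theorem~\ref{theo:vol_distr_affine} isolates $c^m[\cL_{n,r}]$. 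After the $\xi,\eta$ contributions telescope, this yields for $m\geq 2$
\begin{align*}
c^m[\cL_{n,r}]
&=2^{-m}\Big[\sum_{j=1}^r\psi^{(m-1)}\Big(\tfrac{n-r+j}{2}\Big)-(r+1)\psi^{(m-1)}\Big(\tfrac{n+\nu}{2}\Big)\\
&\qquad+\big((r+1)^m-r^m\big)\psi^{(m-1)}(\alpha_1+\alpha_2)\Big],
\end{align*}
with $\alpha_1+\alpha_2=\tfrac{(r+1)(n+\nu)-2r}{2}$ and $\nu=0$ in the spherical case.

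The asymptotics of $\E\cL_{n,r}$ and $\Var\cL_{n,r}$ then follow by inserting $\psi(x)=\log x-\tfrac1{2x}+O(x^{-2})$ and $\psi'(x)=\tfrac1x+O(x^{-2})$. For the variance this turns the three terms above ($m=2$, so $(r+1)^2-r^2=2r+1$) into $\tfrac12\sum_{j=1}^r\tfrac1{n-r+j}$, $-\tfrac{r+1}{2n}$ and $+\tfrac{2r+1}{2(r+1)n}$ up to lower order, and $\sum_{j=1}^r\tfrac1{n-r+j}=H_n-H_{n-r}\sim\log\tfrac{n}{n-r}$. Collecting terms gives $\Var\cL_{n,r}\sim\tfrac12\log\tfrac{n}{n-r}-\tfrac{r^2}{2n(r+1)}$ in the Beta/spherical case and $\tfrac12\log\tfrac{n}{n-r}$ in the Gaussian case, and for the mean one checks $\E\cL_{n,r}\sim\tfrac r2\log n$ in every regime. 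The stated regime-dependent forms are obtained by expanding $\log\tfrac{n}{n-r}$ for $r/n\to 0$, $r\sim\alpha n$ and $n-r=o(n)$; the threshold $\sqrt n$ (rather than $n$) in part (b) appears precisely because the leading term $\tfrac{r^2}{4n^2}$ of $\tfrac12\log\tfrac n{n-r}-\tfrac r{2n}$ is negligible against $\tfrac{r}{2(r+1)n}$ exactly when $r=o(\sqrt n)$.

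For the cumulant bounds I would use the series representation $\psi^{(m-1)}(x)=(-1)^m(m-1)!\sum_{k\geq 0}(x+k)^{-m}$, which makes all summands share a common sign and reduces each to a controllable $p$-series. In the regimes $r=o(n)$ or $r\sim\alpha n$ one has $n-r+j\geq cn$ uniformly, so each $|\psi^{(m-1)}(\tfrac{n-r+j}{2})|\leq 2(m-1)!(cn)^{1-m}$ and summing over $j$ produces $C^m(m-1)!\,rn^{1-m}$; in the Beta case the second and third terms contribute an extra factor $m$ (via $(r+1)^m-r^m\leq m(r+1)^{m-1}$ together with $\alpha_1+\alpha_2\geq c(r+1)n$), giving $m!$ instead of $(m-1)!$. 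For arbitrary $r$ one cannot assume $n-r$ is large, so I would instead use $n-r\geq 0$ to bound $\sum_{j=1}^r\sum_{k\geq0}(n-r+j+2k)^{-m}\leq\sum_{j\geq1}\sum_{k\geq0}(j+2k)^{-m}\leq\tfrac12(\zeta(m-1)+\zeta(m))<2$ for $m\geq3$, which yields the $n$- and $r$-free bound $2(m-1)!$ in the Gaussian case and, after similarly crude estimates of the two correction terms using $n\geq4$, the bound $2\cdot 4^m m!$ in the Beta/spherical case. The main obstacle is exactly this arbitrary-$r$ regime: near the boundary $n-r+j=1$ the polygamma values are large, so the estimate must rely on sign-definiteness and on the convergence of the double $p$-series rather than on pointwise smallness of $\psi^{(m-1)}$, and one must verify that all constants stay uniform in $n$ and $m$.
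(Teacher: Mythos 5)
Your proposal is correct and takes essentially the same route as the paper: the exact formula you derive for $c^m[\cL_{n,r}]$ is precisely the paper's identity \eqref{eq:c_m_expression} (there obtained by differentiating $\log\E[(r!\cV_{n,r})^z]$ from Theorem~\ref{theo:vol_Simplices_Moments} rather than via cumulant additivity applied to Theorem~\ref{theo:vol_distr_affine} -- a cosmetic difference, since the distributional identities were themselves proved by matching exactly these moment generating functions), and your subsequent polygamma analysis ($\psi^{(1)}(x)=1/x+O(x^{-2})$ with harmonic numbers for the variance, the mean value bound $(r+1)^m-r^m\leq m(r+1)^{m-1}$ producing the extra factor $m$, and sign-definiteness plus a convergent double $p$-series for arbitrary $r$) coincides with the paper's Lemmas~\ref{lem:AsymptoticPolygamma}--\ref{lem:CumulantsPreparation} and their deployment in the proof of Theorem~\ref{thm:Cumulants}. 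The one cosmetic slip is writing $\frac 12 \log \frac{n}{n-r}$ in the regime $n-r=o(n)$, which is undefined at $r=n$; your own $H_n-H_{n-r}$ computation (with $H_0=0$) delivers the correct $\frac 12 \log \frac{n}{n-r+1}$ form, exactly as the paper's $+1$ convention does, with the $O(1)$ errors absorbed because the variance diverges in this regime.
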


The proof of Theorem \ref{thm:Cumulants} is to some extent canonical and roughly follows \cite{DöringEichelsbacher}. In particular, it is based on an asymptotic analysis, as $|z|\to\infty$, of the digamma function $\psi(z)=\psi^{(0)}(z) : = \frac{\dd}{\dd z} \log\Gamma(z)$, and the polygamma functions
$$
\psi^{(m)}(z) : = \frac{\dd^m}{\dd z^m}\psi(z) = \frac{\dd^{m+1}}{\dd z^{m+1}} \log\Gamma(z),\qquad m \in \N.
$$
We start with the following lemma.

\begin{lemma}\label{lem:AsymptoticPolygamma}
Let $m\in\N$. Then, as $|z|\to\infty$ in $|\arg z|<\pi-\eps$,
\begin{equation}\label{eq:psi_asympt}
\psi(z) = \log z + O(1/z)\quad\text{and}\quad
\psi^{(m)}(z) =(-1)^{m-1}\,{(m-1)!\over z^m} + O(1/z^{m+1})\,.
\end{equation}
Moreover, for all $z>0$,
\begin{equation}\label{eq:psi_ineq}
|\psi^{(m)}(z)| \leq {(m-1)!\over z^m} + {m!\over z^{m+1}}\,.
\end{equation}
\end{lemma}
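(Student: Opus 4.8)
The plan is to derive both statements from a single identity, namely the Hurwitz-zeta (Mittag-Leffler) representation of the polygamma functions: for every integer $m\ge 1$ and $\Re z>0$ one has the absolutely convergent expansion
$$
\psi^{(m)}(z) = (-1)^{m+1}\,m!\sum_{k=0}^{\infty}\frac{1}{(z+k)^{m+1}},
$$
obtained by differentiating the standard series representation of $\psi$ term by term. This formula alone yields the uniform inequality \eqref{eq:psi_ineq}, while the asymptotics \eqref{eq:psi_asympt} come from the classical Stirling expansion of $\psi$.

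First I would dispatch the inequality \eqref{eq:psi_ineq}, which is the elementary, self-contained part. For $z>0$ all summands above are positive, so taking absolute values merely drops the sign $(-1)^{m+1}$, and I split off the $k=0$ term:
$$
|\psi^{(m)}(z)| = \frac{m!}{z^{m+1}} + m!\sum_{k=1}^{\infty}\frac{1}{(z+k)^{m+1}}.
$$
The first term is exactly the second summand on the right-hand side of \eqref{eq:psi_ineq}. For the tail I use the integral comparison for the positive decreasing function $t\mapsto (z+t)^{-(m+1)}$, namely $\sum_{k\ge 1}(z+k)^{-(m+1)}\le \int_0^{\infty}(z+t)^{-(m+1)}\,\dd t = 1/(m z^{m})$, whence $m!\sum_{k\ge 1}(z+k)^{-(m+1)}\le (m-1)!/z^m$. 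Adding the two contributions reproduces \eqref{eq:psi_ineq} precisely, uniformly in both $m\in\N$ and $z>0$.

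For the asymptotics \eqref{eq:psi_asympt} I would start from the Stirling/Binet expansion $\psi(z) = \log z - \tfrac{1}{2z} + O(1/z^2)$, valid as $|z|\to\infty$ in $|\arg z|<\pi-\eps$, which already gives the first claim. Since $\psi$ is holomorphic on the cut plane and this is a genuine sectorial asymptotic expansion, I can differentiate it $m$ times: Cauchy's integral formula for derivatives on a circle of radius comparable to $|z|$ lying inside the sector converts the $O$-bound into the differentiated $O$-bounds, which legitimizes term-by-term differentiation. Differentiating $\log z$ a total of $m$ times produces $(-1)^{m-1}(m-1)!/z^m$, while the remaining terms (beginning with $-1/(2z)$) contribute only $O(1/z^{m+1})$, giving \eqref{eq:psi_asympt}. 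A more self-contained alternative is to insert the integral representation $\psi^{(m)}(z) = (-1)^{m+1}\int_0^\infty t^{m-1}\,\tfrac{t}{1-\eee^{-t}}\,\eee^{-zt}\,\dd t$ into Watson's lemma, using $\tfrac{t}{1-\eee^{-t}} = 1 + O(t)$ as $t\downarrow 0$: the leading integral $\int_0^\infty t^{m-1}\eee^{-zt}\,\dd t = (m-1)!/z^m$ gives the main term and the remainder is $O(1/z^{m+1})$.

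The one genuinely delicate point is uniformity of the asymptotics over the \emph{entire} sector $|\arg z|<\pi-\eps$ rather than on the positive real axis alone, where the Hurwitz-zeta series is most convenient. For complex $z$ one must either invoke the (standard and citable) sectorial validity of the Stirling expansion, or check that the Watson-lemma remainder estimate survives rotation of the integration contour into the sector, which is where the parameter $\eps$ enters the constant hidden in the $O$-term. Everything else is routine bookkeeping.
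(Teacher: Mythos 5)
Your proposal is correct, and it splits naturally into a part that coincides with the paper and a part that goes beyond it. For the inequality \eqref{eq:psi_ineq} your argument is exactly the paper's: both start from $|\psi^{(m)}(z)|=m!\sum_{k\geq 0}(z+k)^{-(m+1)}$, split off the $k=0$ term, and bound the tail by an integral comparison for the decreasing integrand --- your $\int_0^\infty (z+t)^{-(m+1)}\,\dd t$ is the paper's $\int_z^\infty x^{-(m+1)}\,\dd x$ after the substitution $x=z+t$, both equal to $1/(m z^m)$ --- yielding the two summands uniformly in $m\in\N$ and $z>0$. For the asymptotics \eqref{eq:psi_asympt}, however, the paper offers no argument at all: it simply cites Abramowitz--Stegun, pp.~259--260. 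You instead supply an actual derivation, by either of two standard routes: differentiating the sectorial Stirling expansion $\psi(z)=\log z-\tfrac1{2z}+O(1/z^2)$ term by term, with Cauchy's integral formula on circles of radius comparable to $|z|$ legitimizing the differentiation (this shrinks the sector slightly, which is harmless since $\eps>0$ is arbitrary and one may start from $\eps/2$), or applying Watson's lemma to $\psi^{(m)}(z)=(-1)^{m+1}\int_0^\infty t^m \eee^{-zt}/(1-\eee^{-t})\,\dd t$, where $t/(1-\eee^{-t})=1+O(t)$ gives the leading term $(m-1)!/z^m$ with sign $(-1)^{m+1}=(-1)^{m-1}$ and remainder $O(1/z^{m+1})$, contour rotation extending the estimate to the full sector. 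Your explicit flagging of the uniformity issue over $|\arg z|<\pi-\eps$ is precisely the point the paper buries in the citation; what your write-up buys is self-containedness, at the modest cost of justifying the (routine) differentiation of an asymptotic expansion that the paper never has to address.
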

\begin{proof}
The asymptotic relations can be found in \cite{Abramovitz}, pp.\ 259--260. To prove the inequality, note that
$$
|\psi^{(m)}(z)| = \sum_{k=0}^\infty \frac { m!} {(z+k)^{m+1}} \leq \frac {m!}{z^{m+1}} + m!\int_z^\infty \frac {{\rm d}x}{x^{m+1}} = \frac {m!}{z^{m+1}} +  \frac{(m-1)!}{z^m},
$$
where we estimated the sum by the integral because the function $x\mapsto x^{-(m+1)}$, $x>0$, is decreasing.
\end{proof}

\begin{lemma}\label{lem:Polygamma(const)}
As $n\to\infty$, one has
\begin{equation}\label{eq:lem:Polygamma_asympt}
\frac{1}{2} \sum_{j=1}^{n} \psi\left(\frac{j}{2}\right) \sim  {n\over 2} \log n, 
\quad 
\frac{1}{4} \sum_{j=1}^{n} \psi^{(1)}\left(\frac{j}{2}\right) = {1\over 2}\log {n} +c_1  + o(1),
\end{equation}
where $c_1={1\over 2}(\gamma+1+{\pi^2\over 8})$ with the Euler-Mascheroni constant $\gamma$. Moreover, for all $m\ge 3$,
\begin{equation}\label{eq:lem:Polygamma_ineq}
\frac{1}{2^{m}} \Big| \sum_{j=1}^{n} \psi^{(m-1)}\left(\frac{j}{2}\right) \Big| \le 2 (m-1)!\,.
\end{equation}
\end{lemma}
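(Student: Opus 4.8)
The three claims are handled separately, all resting on Lemma~\ref{lem:AsymptoticPolygamma}. For the first relation I would insert the asymptotics $\psi(z) = \log z + O(1/z)$ from \eqref{eq:psi_asympt}. Writing $\psi(j/2) = \log(j/2) + O(1/j)$ for the large indices (the finitely many small indices, where the asymptotic does not directly apply, contribute only a bounded amount), summation gives
\[
\sum_{j=1}^n \psi(j/2) = \sum_{j=1}^n \log j - n\log 2 + O(\log n) = \log(n!) - n\log 2 + O(\log n).
\]
Stirling's formula $\log(n!) = n\log n - n + O(\log n)$ then yields $\sum_{j=1}^n \psi(j/2) \sim n\log n$, hence $\frac12\sum_{j=1}^n\psi(j/2)\sim \frac n2\log n$.

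The second relation is the crux, since the precise additive constant $c_1$ must be identified. The plan is to isolate the divergent part and evaluate the remaining convergent series exactly. Using $\psi^{(1)}(z) = 1/z + O(1/z^2)$ (the case $m=1$ of \eqref{eq:psi_asympt}) I would write
\[
\sum_{j=1}^n \psi^{(1)}\!\Big(\frac j2\Big) = 2\sum_{j=1}^n \frac1j + \sum_{j=1}^n\Big[\psi^{(1)}\!\Big(\frac j2\Big) - \frac2j\Big].
\]
The first sum equals $2H_n = 2\log n + 2\gamma + o(1)$, where $H_n$ is the $n$th harmonic number, while the bracketed terms are $O(1/j^2)$, so the second sum converges to $S := \sum_{j=1}^\infty[\psi^{(1)}(j/2) - 2/j]$ up to an $o(1)$ tail. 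Dividing by $4$ produces $\frac12\log n + \frac\gamma2 + \frac S4 + o(1)$, so it remains to show $S = 2 + \frac{\pi^2}{4}$, which is exactly equivalent to $\frac\gamma2 + \frac S4 = c_1$. To evaluate $S$ I would split according to the parity of $j$ and use the series representation $\psi^{(1)}(x) = \sum_{k\ge0}(x+k)^{-2}$. The even part $\sum_{i\ge1}[\psi^{(1)}(i) - 1/i]$ is summed by interchanging the order of summation and equals $1$; the odd part $\sum_{i\ge1}[\psi^{(1)}(i-\tfrac12) - \tfrac2{2i-1}]$ is handled similarly, invoking the value $\sum_{p\ge1}(2p-1)^{-2} = \pi^2/8$, and equals $1 + \pi^2/4$. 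Hence $S = 2 + \pi^2/4$ and the constant matches. This exact constant evaluation is the main obstacle: the leading logarithmic term is immediate from the first-order asymptotics, but pinning down $c_1$ forces the parity splitting and the exact odd-index trigamma value, the only genuinely delicate computation.

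For the third relation I would use the explicit bound \eqref{eq:psi_ineq} with $m$ replaced by $m-1$, namely $|\psi^{(m-1)}(z)| \le (m-2)!/z^{m-1} + (m-1)!/z^m$ for $z>0$. The triangle inequality together with the substitution $z = j/2$ gives
\[
\Big|\sum_{j=1}^n \psi^{(m-1)}\!\Big(\frac j2\Big)\Big| \le (m-2)!\,2^{m-1}\sum_{j=1}^n \frac1{j^{m-1}} + (m-1)!\,2^m\sum_{j=1}^n \frac1{j^m}.
\]
Since $m\ge3$ both exponents are at least $2$, so the partial sums are bounded by $\zeta(m-1)\le\zeta(2)=\pi^2/6$ and $\zeta(m)\le\zeta(2)$. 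Multiplying through by $2^{-m}$ bounds the left-hand side by $(m-1)!\big[\tfrac{\zeta(m-1)}{2(m-1)} + \zeta(m)\big]$, and a direct check shows the bracket is at most $\tfrac{\pi^2/6}{4} + \zeta(3) < 2$ for every $m\ge3$, which yields \eqref{eq:lem:Polygamma_ineq} with room to spare.
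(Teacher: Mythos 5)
Your proof is correct and follows the same skeleton as the paper's: the first relation via $\psi(z)=\log z+O(1/z)$ plus Stirling, the second via precisely the paper's decomposition $\frac14\sum_{j=1}^n\bigl(\psi^{(1)}(j/2)-\frac2j\bigr)+\frac12(H_n-\log n)$ with the bracketed series converging because $\psi^{(1)}(z)-\frac1z=O(1/z^2)$, and the third by summing the explicit bound \eqref{eq:psi_ineq} against zeta values, arriving at the same numerical constant $(m-1)!\bigl(\tfrac14\zeta(2)+\zeta(3)\bigr)<2\,(m-1)!$ as the paper (which uses $(m-2)!\le\tfrac12(m-1)!$ where you use $\tfrac1{2(m-1)}\le\tfrac14$; these are the same estimate). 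The one place where you go beyond the paper is the identification of $c_1$: the paper merely notes convergence of the series and refers to D\"oring and Eichelsbacher for the explicit value, whereas you evaluate $S=\sum_{j\ge1}\bigl(\psi^{(1)}(j/2)-\frac2j\bigr)$ by parity splitting. Your values check out: for the even part, $\psi^{(1)}(i)-\frac1i=\sum_{l\ge i}\frac1{l^2(l+1)}$ and interchanging summation gives $\sum_{l\ge1}\frac1{l(l+1)}=1$; for the odd part, the analogous telescoping together with $\sum_{p\ge1}(2p-1)^{-2}=\pi^2/8$ gives $1+\pi^2/4$, so $S=2+\pi^2/4$ and indeed $\frac\gamma2+\frac S4=\frac12\bigl(\gamma+1+\frac{\pi^2}8\bigr)=c_1$. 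One small wrinkle in your third part: the preliminary bound $\zeta(m)\le\zeta(2)$ that you state first would be insufficient, since it yields the bracket $\tfrac14\zeta(2)+\zeta(2)\approx 2.06>2$; what closes the argument is the sharper $\zeta(m)\le\zeta(3)$ for $m\ge3$, which your final display correctly uses, so keep that version and drop the $\zeta(m)\le\zeta(2)$ remark.
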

\begin{proof}
The asymptotic relations~\eqref{eq:lem:Polygamma_asympt} can essentially be found in \cite{DöringEichelsbacher} (where the constant $c_1$ has been computed explicitly).  The first one follows from $\psi(z) = \log z + O(1/z)$ as $z\to\infty$ together with
$\sum_{j=1}^n \log \frac j2 \sim  n\log {n}$ as $n\to\infty$.
To prove the second one, write
$$
\frac 14\sum_{j=1}^{n} \psi^{(1)}\left(\frac{j}{2}\right) -\frac 12 \log n = \frac 14\sum_{j=1}^{n} \left(\psi^{(1)}\left(\frac{j}{2}\right)-\frac 2j \right) + \frac 12 \left(\sum_{j=1}^n \frac 1j - \log n\right)
$$
and observe that the series  $\sum_{j=1}^{\infty} (\psi^{(1)}(\frac{j}{2})-\frac 2j)$ converges because $\psi^{(1)}(z)- \frac 1z = O(\frac 1 {z^2})$ as $z\to\infty$. The claim follows since $\sum_{j=1}^n \frac 1j -\log n$ converges to the Euler constant $\gamma$.

To prove inequality~\eqref{eq:lem:Polygamma_ineq}, use Lemma \ref{lem:AsymptoticPolygamma} to get
$$
\frac{1}{2^{m}} \Big| \sum_{j=1}^{n} \psi^{(m-1)}\left(\frac{j}{2}\right) \Big| \leq \frac 1 {2^m} \sum_{j=1}^{\infty} \left(\frac{(m-2)!}{(j/2)^{m-1}} + \frac{(m-1)!}{(j/2)^m}\right)
\leq (m-1)! \left(\frac 14\zeta(2) + \zeta(3)\right)
$$
for all $m\geq 3$, where we used the inequality $(m-2)! \leq \frac 12 (m-1)!$.  The constant in the brackets is smaller than $2$.
\end{proof}

Since the moments of $\cV_{n,r}$ both, for the Gaussian and the Beta model, involve the same product of fractions of Gamma functions, we prepare the proof of Theorem \ref{thm:Cumulants} with the following lemma. We define
$$
S_{n,r}(z):=\sum_{j=1}^{r} \bigg[\log\Gamma\left({n-r+j+z\over 2}\right) - \log\Gamma\left({n-r+j\over 2}\right)\bigg],\qquad z>0.
$$

\begin{lemma}\label{lem:CumulantsPreparation}
\begin{itemize}
\item[(a)] If $r=o(n)$ then, as $n\to\infty$,
$$
{\dd^m\over\dd z^m}S_{n,r}(z)\Big|_{z=0}
\sim
\begin{cases}
{r\over 2} \log {n}  &: m=1 \\
{(-1)^{m}\over 2}\,(m-2)! \,r\, n^{-(m-1)} &: m\geq 2.
\end{cases}
$$
\item[(b)] If $r \sim \alpha n$ for some $\alpha\in(0,1)$ then, as $n\to\infty$,
$$
{\dd^m\over\dd z^m}S_{n,r}(z)\Big|_{z=0}
\sim
\begin{cases}
\frac {\alpha n}{2} \log n &: m=1\\
\frac 12 \log \frac 1{1-\alpha}&: m=2 \\
\frac{(-1)^{m}  (m-3)!}{2\cdot  n^{m-2}} \left(\frac 1 {(1-\alpha)^{m-2}} - 1\right)&: m\geq 3.
\end{cases}
$$
\item[(c)] If $n-r=o(n)$ then, as $n\to\infty$,
$$
{\dd^m\over\dd z^m}S_{n,r}(z)\Big|_{z=0} \sim   \begin{cases}
{n\over 2}\log {n}  &: m=1\\
{{1}\over 2}\log \frac {n}{n-r+1}&: m=2.
\end{cases}
$$
\item[(d)]
For $m\ge 2$ and if $r=o(n)$ or $r\sim \alpha n$, $\alpha\in (0,1)$, then there is a constant $C$ which may depend on $\alpha$ (but does not depend on $m,n$) such that
\begin{align*}
\left|{\dd^m\over\dd z^m}S_{n,r}(z)\Big|_{z=0}\right| \leq  C^m  (m-1)! r n^{1-m}.
\end{align*}
\item[(e)]
Finally, for $m\ge 3$ and without any conditions on $r$, we have
\begin{align*}
\left|{\dd^m\over\dd z^m}S_{n,r}(z)\Big|_{z=0}\right| \le 2(m-1)!\,.
\end{align*}
\end{itemize}

\end{lemma}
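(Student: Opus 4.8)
The plan is to begin with the exact differentiation identity. Since $\frac{\dd}{\dd z}\log\Gamma\big(\tfrac{n-r+j+z}{2}\big) = \tfrac12\,\psi\big(\tfrac{n-r+j+z}{2}\big)$ and each further differentiation contributes another factor $\tfrac12$ and raises the order of the polygamma function by one, evaluating at $z=0$ yields
$$
\frac{\dd^m}{\dd z^m}S_{n,r}(z)\Big|_{z=0} = \frac{1}{2^m}\sum_{j=1}^{r}\psi^{(m-1)}\!\left(\frac{n-r+j}{2}\right).
$$
Every assertion (a)--(e) then becomes a statement about this single sum of polygamma values along the arithmetic progression $\tfrac{n-r+j}{2}$, $j=1,\dots,r$, and the strategy throughout is to insert the appropriate estimate for $\psi^{(m-1)}$ from Lemma \ref{lem:AsymptoticPolygamma} and evaluate the resulting elementary sum.

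For the asymptotic formulas (a)--(c) I would substitute the expansion $\psi^{(m-1)}(w) = (-1)^{m}(m-2)!\,w^{-(m-1)} + O(w^{-m})$ for $m\ge 2$ (and $\psi(w)=\log w+O(1/w)$ for $m=1$) from \eqref{eq:psi_asympt}, so that the leading contribution is $\tfrac{(-1)^m}{2}(m-2)!\sum_{j=1}^{r}(n-r+j)^{-(m-1)}$. It then remains to evaluate $\sum_{j=1}^r (n-r+j)^{-(m-1)} = \sum_{k=n-r+1}^{n}k^{-(m-1)}$ by comparison with $\int_{n-r}^{n} x^{-(m-1)}\,\dd x$. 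When $r=o(n)$ all summands are asymptotically $n^{-(m-1)}$ and there are $r$ of them, giving the stated form; when $r\sim\alpha n$ the integral evaluates for $m\ge 3$ to $\tfrac{1}{(m-2)n^{m-2}}\big((1-\alpha)^{-(m-2)}-1\big)$, whence the identity $(m-2)!/(m-2)=(m-3)!$ produces the claimed expression, while $m=2$ gives the logarithm $\tfrac12\log\tfrac{1}{1-\alpha}$; when $n-r=o(n)$ only $m=1,2$ are asserted, and for $m=2$ the sum is the harmonic tail $\sum_{k=n-r+1}^{n}k^{-1}\sim\log\tfrac{n}{n-r+1}$. The $m=1$ cases follow in each regime from $\sum_{j=1}^r\log(n-r+j)\sim r\log n$.

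For the bound (d) I would use the inequality \eqref{eq:psi_ineq}, namely $|\psi^{(m-1)}(w)|\le (m-2)!\,w^{-(m-1)} + (m-1)!\,w^{-m}$, together with the observation that in both regimes $r=o(n)$ and $r\sim\alpha n$ one has $n-r+j\ge n-r+1\ge c\,n$ for large $n$ and some constant $c=c(\alpha)\in(0,1]$. Bounding the two resulting sums termwise by $r\,(cn)^{-(m-1)}$ and $r\,(cn)^{-m}$ and collecting the powers of $c$ and the factorials then gives a bound of the shape $C^m(m-1)!\,r\,n^{1-m}$ with $C$ depending only on $\alpha$. For the bound (e), valid for arbitrary $r$, the crucial point is that $\psi^{(m-1)}(w)=(-1)^m(m-1)!\sum_{k\ge 0}(w+k)^{-m}$ has constant sign $(-1)^m$ on $(0,\infty)$; hence the partial sum over $k=n-r+1,\dots,n$ has absolute value at most that of the full sum over $k=1,\dots,n$, to which the bound \eqref{eq:lem:Polygamma_ineq} applies directly to yield $2(m-1)!$.

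The delicate part will be controlling the error terms uniformly so that they are genuinely negligible against the main terms. This is most subtle in regime (c), where $n-r$ may stay bounded (e.g.\ $r=n$): one must check that the main term $\log\tfrac{n}{n-r+1}$ still tends to infinity, so that the $O(1)$ contributions from the $O(w^{-m})$ tails drop out, and that the error sum $\sum_{k=n-r+1}^{n}k^{-2}=O\big(\tfrac{1}{n-r}\big)$ is dominated by it. In (d) and (e) the remaining care is to keep the constant uniform in $m$, for which the sign observation underlying (e) is exactly what prevents cancellation from spoiling the partial-sum bound.
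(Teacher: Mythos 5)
Your proposal is correct and takes essentially the same route as the paper: the exact polygamma identity $\frac{\dd^m}{\dd z^m}S_{n,r}(z)\big|_{z=0}=\frac1{2^m}\sum_{j=1}^r\psi^{(m-1)}\big(\frac{n-r+j}{2}\big)$, insertion of the expansions from Lemma \ref{lem:AsymptoticPolygamma} followed by evaluating the resulting tail sums (where the paper invokes Lemma \ref{lem:Polygamma(const)} and the Riemann zeta tail asymptotics, you compare with integrals and harmonic numbers, which is the same computation), the monotonicity of $|\psi^{(m-1)}|$ together with $n-r+1\geq cn$ for part (d), and the constant-sign observation combined with inequality \eqref{eq:lem:Polygamma_ineq} for part (e). Your care about the regime where $n-r$ stays bounded, checking that the $O(1)$ error contributions are dominated by $\log\frac{n}{n-r+1}\to\infty$, matches the paper's treatment exactly.
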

\begin{proof}
Let us prove $(a)$, $(b)$, $(c)$ for $m=1$. We have
$$
{\dd\over\dd z}S_{n,r}(z)\Big|_{z=0} = {1\over 2}\sum_{j=1}^r\psi\left(\frac{n-r+j}{2}\right)
={1\over 2}\sum_{j=1}^{n}\psi\left(\frac{j}{2}\right) - {1\over 2}\sum_{j=1}^{n-r}\psi\left(\frac{j}{2}\right),
$$
and all three statements follow easily from the relation $\frac{1}{2} \sum_{j=1}^{n} \psi\left(\frac{j}{2}\right) \sim  {n\over 2} \log n$; see Lemma \ref{lem:Polygamma(const)}.

Next we prove $(a)$, $(b)$, $(c)$ for $m\geq 2$. We have
$$
{\dd^m\over\dd z^m}S_{n,r}(z)\Big|_{z=0} = \frac{1}{2^m} \sum_{j=1}^{r} \psi^{(m-1)}\left(\frac{n-r+j}{2}\right)
$$
and again we can conclude $(a)$ by using Equation~\eqref{eq:psi_asympt} of Lemma \ref{lem:AsymptoticPolygamma}. To prove $(b)$  for $m=2$, apply the second asymptotics in~\eqref{eq:lem:Polygamma_asympt} of Lemma \ref{lem:Polygamma(const)} to get
\begin{align*}
{\dd^2\over\dd z^2}S_{n,r}(z)\Big|_{z=0} &= \frac{1}{4} \sum_{j=1}^{r} \psi^{(1)}\left(\frac{n-r+j}{2}\right) = \frac 12 \log n +c_1  - \frac 12 \log (n-r)-c_1 + o(1)\\
&= \frac 12 \log \frac n {n-r} + o(1) = \frac 12 \log \frac 1 {1-\alpha} + o(1).
\end{align*}
To prove $(b)$ for $m\geq 3$, note that for $r\sim \alpha n$,
\begin{align*}
&\frac{1}{2^m} \sum_{j=1}^{r} \psi^{(m-1)}\left(\frac{n-r+j}{2}\right) \sim
\frac{1}{2^m} \sum_{k=n-r+1}^n\,\frac {(-1)^{m-2} (m-2)!} {(k/2)^{m-1}}\\
&=
\frac{(-1)^m\,(m-2)!}{2} \bigg[\sum_{k=1}^n\,\frac {1} {k^{m-1}}-\sum_{k=1}^{n-r}\,\frac {1} {k^{m-1}}\bigg]\sim
\frac{(-1)^{m}  (m-3)!}{2\cdot  n^{m-2}} \left(\frac 1 {(1-\alpha)^{m-2}} - 1\right),
\end{align*}
using the asymptotics for the tail of the Riemann zeta series. Finally, to prove $(c)$ for $m=2$ use the formula $\frac{1}{4} \sum_{j=1}^{r} \psi^{(1)}\left(\frac{n-r+j}{2}\right) = \frac 12 \log n + O(1)$ following from~\eqref{eq:lem:Polygamma_asympt} to get
\begin{multline*}
{\dd^2\over\dd z^2}S_{n,r}(z)\Big|_{z=0} = \frac{1}{4} \sum_{j=1}^{r} \psi^{(1)}\left(\frac{n-r+j}{2}\right) = \frac 12 \log n +O(1)  - \frac 12 \log (n-r+1) - O(1)\\
= \frac 12 \log \frac n {n-r+1} + O(1) \sim \frac 12 \log \frac n {n-r+1}
\end{multline*}
because $\frac n {n-r+1}\to\infty$. We added the term $+1$ to make the formula work in the case $r=n$. 


Let us prove $(d)$. Since the function $|\psi^{(m-1)}(z)| = \sum_{k=0}^\infty \frac {(m-2)!} {(z+k)^{m}}$ is decreasing, we can write
$$
\left| {\dd^m\over\dd z^m}S_{n,r}(z)\Big|_{z=0}\right|
=
\frac{1}{2^m} \sum_{j=1}^{r} \left|\psi^{(m-1)}\left(\frac{n-r+j}{2}\right)\right|
\leq
\frac {r}{2^m} \left|\psi^{(m-1)}\left(\frac{n-r+1}{2}\right)\right|,
$$
and the claim follows from the estimates $|\psi^{(m-1)}(z)| \leq 2 \cdot (m-1)! z^{1-m}$, $z\geq 1$, which is a consequence of Lemma \ref{lem:AsymptoticPolygamma}, and $n-r+1 > n/C$ for sufficiently large $C$.

Let us prove $(e)$.  If $m\ge 3$ and $r$ is arbitrary, we observe that the function $\psi^{(m-1)}(z)$, $z>0$, has the same sign as $(-1)^m$ and hence
\begin{align*}
\left| {\dd^m\over\dd z^m}S_{n,r}(z)\Big|_{z=0}\right|
&=
\frac{1}{2^m} \sum_{j=1}^{r} \left|\psi^{(m-1)}\left(\frac{n-r+j}{2}\right)\right|
\leq  \frac{1}{2^m} \sum_{j=1}^{n} \left|\psi^{(m-1)}\left(\frac{j}{2}\right)\right|.
\end{align*}
Then, the result follows in view of inequality~\eqref{eq:lem:Polygamma_ineq} of Lemma \ref{lem:Polygamma(const)}. Thus, the proof is complete.
\end{proof}

\begin{proof}[Proof of Theorem \ref{thm:Cumulants}]
Denote the moment generating function of $\cL_{n,r}=\log (r! \cV_{n,r})$ by
$$
M_{n,r}(z):=\bE[\exp(z\cL_{n,r})]= \E[(r!\cV_{n,r})^z].
$$
We start with the Gaussian model. Recalling the moment formula from Theorem \ref{theo:vol_Simplices_Moments}(a), we see that
\begin{align*}
\log M_{n,r}(z) = S_{n,r}(z)+\frac{z}{2} \log(r+1) + \frac{zr}{2} \log 2
\end{align*}
and hence
\begin{align*}
\frac{\dd^m}{\dd z^m} \log M_{n,r}(z) = \frac{\dd^m}{\dd z^m}S_{n,r}(z)+{\bf 1}_{\{m=1\}}\frac{1}{2} \log(r+1) +  {\bf 1}_{\{m=1\}} \frac{r}{2} \log 2
\end{align*}
for all $m\in\N$. By taking $z=0$ it follows that
$$
c^m[\cL_{n,r}] = \frac{\dd^m}{\dd z^m}S_{n,r}(z)\Big|_{z=0} +{\bf 1}_{\{m=1\}}\frac{1}{2} \log(r+1) +  {\bf 1}_{\{m=1\}} \frac{r}{2} \log 2.
$$
Using Lemma~\ref{lem:CumulantsPreparation} we immediately get the required asymptotic formulae for $\E \cL_{n,r} = c^1[\cL_{n,r}]$ and $\Var \cL_{n,r} = c^2[\cL_{n,r}]$. The estimates for the cumulants $c^m[\cL_{n,r}]$, $m\geq 3$,  follow from Lemma~\ref{lem:CumulantsPreparation} (d),(e).

\vspace*{2mm}
Next, we consider the Beta model and prove part (b) of the theorem. Recalling the moment formula from Theorem \ref{theo:vol_Simplices_Moments}(b) and denoting by $M_{n,r}(z)$ again the moment generating function of $\cL_{n,r}$, we see that
\begin{multline*}
\log M_{n,r}(z) =
S_{n,r}(z)+\log\Gamma\Big({r(n+\nu-2)+(n+\nu)\over 2}+\frac{(r+1)z}{2}\Big)
\\
+(r+1)\log \Gamma\left(\frac{n + \nu}{2}\right)
-\log \Gamma\Big({r(n+\nu-2)+(n+\nu)\over 2}+\frac{rz}{2}\Big)-(r+1)\log \Gamma\left(\frac{n+\nu}{2} + \frac{z}{2}\right).
\end{multline*}
It follows that, for $m\in\N$, $\frac{\dd^m}{\dd z^m} \log M_{n,r}(z)$ equals
\begin{equation}\label{eq:AbleitungMnrZ}
\begin{split}
&\frac{\dd^m}{\dd z^m}S_{n,r}(z)+\Big({r+1\over 2}\Big)^{m}\psi^{(m-1)}\Big({r(n+\nu-2)+(n+\nu)\over 2}+\frac{(r+1)z}{2}\Big)\\
&-\Big({r\over 2}\Big)^{m}\psi^{(m-1)}\Big({r(n+\nu-2)+(n+\nu)\over 2}+\frac{rz}{2}\Big)-{r+1\over 2^{m}}\psi^{(m-1)}\left(\frac{n+\nu}{2} + \frac{z}{2}\right).
\end{split}
\end{equation}
Taking $z=0$, we obtain
\begin{multline}\label{eq:c_m_expression}
c^m[\cL_{n,r}] =
\frac{\dd^m}{\dd z^m} S_{n,r}(z)\Big|_{z=0}  + \Big({r+1\over 2}\Big)^{m}\psi^{(m-1)}\Big({r(n+\nu-2)+(n+\nu)\over 2}\Big)\\
-\Big({r\over 2}\Big)^{m}\psi^{(m-1)}\Big({r(n+\nu-2)+(n+\nu)\over 2}\Big)-{r+1\over 2^{m}}\psi^{(m-1)}\left(\frac{n+\nu}{2}\right).
\end{multline}

Let us compute the asymptotics of $\Var\cL_{n,r}= c^2[\cL_{n,r}]$ in the case $r=o(n)$.  First of all, using the formula $\psi^{(1)}(z) = 1/z + O(1/z^2)$ as $z\to\infty$, we obtain
$$
{\dd^2\over\dd z^2}S_{n,r}(z)\Big|_{z=0} = \frac{1}{4} \sum_{j=1}^{r} \psi^{(1)}\left(\frac{n-r+j}{2}\right) 
=
\frac 14 \sum_{j=1}^r \frac {2}{n-r+j} + O\left(\frac {r}{n^2}\right)
=
\frac {H_n-H_{n-r}}2 + O\left(\frac {r}{n^2}\right),
$$
where $H_n = \sum_{k=1}^n 1/k$ is the $n$-th harmonic number. Using the formula $H_n = \log n + \gamma + 1/(2n) + O(1/n^2)$ as $n\to\infty$, we arrive at
$$
{\dd^2\over\dd z^2}S_{n,r}(z)\Big|_{z=0}
=
\frac 12 \log \frac{n}{n-r} + \frac 12 \left(\frac 1n - \frac 1 {n-r}\right) + O\left(\frac 1 {n^2}\right) +  O\left(\frac {r}{n^2}\right)
=
\frac 12 \log \frac{n}{n-r} + O\left(\frac {r}{n^2}\right). 
$$
Again using the formula $\psi^{(1)}(z) = 1/z + O(1/z^2)$ as $z\to\infty$, we obtain
$$
\psi^{(1)}\Big({r(n+\nu-2)+(n+\nu)\over 2}\Big) = \frac 2 {n(r+1) + O(r)} + O\left(\frac {1}{n^2r^2}\right) =\frac 2 {n(r+1)} + O\left(\frac {1}{n^2r}\right)
$$
and
$$
\psi^{(1)}\left(\frac{n+\nu}{2}\right) = \frac {2}{n} + O\left(\frac 1 {n^2}\right).
$$
Recalling~\eqref{eq:c_m_expression} and taking everything together, we obtain
\begin{multline*}
\Var\cL_{n,r} = c^2[\cL_{n,r}]
=
\frac 12 \log \frac{n}{n-r} + \frac {2r+1}{4}\frac 2 {n(r+1)} - \frac {r+1}{4}\frac {2}{n}  + O\left(\frac r {n^2}\right)
\\=
\frac 12 \log \frac n{n-r} - \frac {r^2}{2n (r+1)}+ O\left(\frac r {n^2}\right).
\end{multline*}

In the case $r\sim \alpha n$ we evidently have 
$$
\lim_{n\to\infty} \Var\cL_{n,r} = \frac 12 \log \frac 1 {1-\alpha} - \frac {\alpha}{2}.
$$
In the case $r= o(n)$ observe that  $\log \frac{n}{n-r} \geq \frac rn$, so that $\frac 12 \log \frac n{n-r} - \frac {r^2}{2n (r+1)} \geq \frac {r}{2n (r+1)}$. Thus, we have $\frac r {n^2}=o(\frac 12 \log \frac n{n-r} - \frac {r^2}{2n (r+1)})$ and we can conclude that
$$
\Var\cL_{n,r} \sim \frac 12 \log \frac n{n-r} - \frac {r^2}{2n (r+1)}. 
$$
Finally, observe that in the case $r=o(\sqrt n)$ we can use the Taylor expansion of the logarithm to get $\Var\cL_{n,r} \sim \frac {r}{2(r+1)n}$, but this formula breaks down if $r$ is of order $\sqrt n$. This completes the proof of the asymptotics of $\Var\cL_{n,r}$ in the cases $r=o(n)$ and $r\sim \alpha n$.

\vspace*{2mm}

Let us now compute the asymptotics of $\Var\cL_{n,r})= c^2[\cL_{n,r}]$ in the case $n-r = o(n)$.  Using the formula $\psi^{(1)}(z) = 1/z + O(1/z^2)$ as $z\to\infty$, we obtain
$$
{\dd^2\over\dd z^2}S_{n,r}(z)\Big|_{z=0} = \frac{1}{4} \sum_{j=1}^{r} \psi^{(1)}\left(\frac{n-r+j}{2}\right) 
=
\frac {H_n-H_{n-r}}2 + O\left(\frac 1n\right),
$$
Using the formulas $H_n = \log n + O(1)$ and $H_{n-r} = \log (n-r+1) +O(1)$ (where  $+1$ is needed to make the expression well-defined in the case $r=n$), we arrive at
$$
{\dd^2\over\dd z^2}S_{n,r}(z)\Big|_{z=0} = \frac 12 \log \frac{n}{n-r+1} +O(1).
$$
By the formula $\psi^{(1)}(z) = O(1/z)$ as $z\to\infty$, we have
$$
\psi^{(1)}\Big({r(n+\nu-2)+(n+\nu)\over 2}\Big) = O\left(\frac 1{n^2}\right),
\quad
\psi^{(1)}\left(\frac{n+\nu}{2}\right) =  O\left(\frac 1 {n}\right).
$$
Plugging everything into \eqref{eq:c_m_expression} yields
$$
\Var\cL_{n,r} = c^2[\cL_{n,r}] = \frac 12 \log \frac{n}{n-r+1} +O(1)
\sim \frac 12 \log \frac{n}{n-r+1}
$$
because $\frac {n}{n-r+1} \to\infty$, thus proving the required asymptotics of the variance.

\vspace*{2mm}
Next we prove the bounds on the cumulants assuming that $r=o(n)$ or $r\sim \alpha n$. Recall from Lemma \ref{lem:CumulantsPreparation}(d) the estimate
\begin{align*}
\left|{\dd^m\over\dd z^m}S_{n,r}(z)\Big|_{z=0}\right| \leq  C^m  (m-1)! r n^{1-m}.
\end{align*}
Further, since $\nu\geq 0$, we have
$$
{r(n+\nu-2)+(n+\nu)\over 2} \geq \frac {r(n-2)}{2}. 
$$
Since the function $|\psi^{(m-1)}(z)|$ is non-increasing, we have, using also the estimate $|\psi^{(m-1)}(z)| \leq 2 \cdot (m-1)! z^{1-m}$,
$$
\left|\psi^{(m-1)}\Big({r(n+\nu-2)+(n+\nu)\over 2}\Big)\right| \leq \left|\psi^{(m-1)}\Big(\frac{r(n-2)}{2}\Big) \right|
\leq 
2^m (m-1)! r^{1-m} (n-2)^{1-m}.
$$
By the mean value theorem, we also have $(r+1)^m - r^m \leq m (r+1)^{m-1}$, hence
$$
\frac{(r+1)^m - r^m}{2^m}\left|\psi^{(m-1)}\Big({r(n+\nu-2)+(n+\nu)\over 2}\Big)\right|
\leq 
m! \left(\frac{r+1}{r}\right)^{m-1}(n-2)^{1-m} 
\leq 
4^m m!  n^{1-m}
$$
because $n-2\geq n/2$ for $n\geq 4$. 
Similarly, by the non-increasing property of $|\psi^{(m-1)}(z)|$ and the estimate $|\psi^{(m-1)}(z)| \leq 2 \cdot (m-1)! z^{1-m}$, we have
$$
{r+1\over 2^{m}}\left|\psi^{(m-1)}\left(\frac{n+\nu}{2}\right) \right| 
\leq {r+1\over 2^{m}} \left|\psi^{(m-1)}\left(\frac{n}{2}\right)\right| 
\leq 2r (m-1)! n^{1-m}.  
$$
Recalling \eqref{eq:c_m_expression} and taking the above estimates together, we arrive at the required estimate
$$
|c^m[\cL_{n,r}]| \leq  C^m  m! r n^{1-m}
$$
for a sufficiently large constant $C>0$ not depending on $n$ and $m$.
To prove the bound $|c^m[\cL_{n,r}]| \leq   2 \cdot 4^m m!$ without restrictions on $r(n)$, we argue as above except that using Lemma~\eqref{lem:CumulantsPreparation}(e) to bound the derivative of $S_{n,r}$:
$$
|c^m[\cL_{n,r}]| \leq 2 (m-1)! + 4^m m!  n^{1-m} + 2r (m-1)! n^{1-m} \leq 2 \cdot 4^m m!. 
$$

Finally, we consider the spherical model. Since the results for the Beta model are independent of the parameter $\nu$, they carry over to the spherical model which appears as a limiting case, as $\nu\downarrow 0$.
\end{proof}

\subsection{Berry-Esseen bounds and moderate deviations for the log-volume}\label{SectionLDP}

We introduce some terminology. One says that a sequence $(\nu_n)_{n \in \bN}$ of probability measures on a topological space $E$ fulfils a large deviation principle (LDP) with speed $a_n$ and (good) rate function $I : E \rightarrow [0,\infty]$, if $I$ is lower semi-continuous, has compact level sets and if for every Borel set $B\subseteq E$,
\begin{align*}
-\inf\limits_{x\in \intt(B)} I(x) \leq \liminf\limits_{n \rightarrow \infty} a_n^{-1} \log \nu_n (B) \leq \limsup\limits_{n \rightarrow \infty} a_n^{-1} \log \nu_n (B) \leq -\inf\limits_{x\in \cl(B)} I(x)\,,
\end{align*}
where $\intt(B)$ and $\cl(B)$ stand for the interior and the closure of $B$, respectively. A sequence $(X_n)_{n \in \bN}$ of random elements in $E$ satisfies a LDP with speed $a_n$ and rate function $I : E \rightarrow [0,\infty]$, if the family of their distributions does. Moreover, if the rescaling $a_n$ lies between that of a law of large numbers and that of a central limit theorem, one usually speaks about a moderate deviation principle (MDP) instead of a LDP with speed $a_n$ and rate function $I$.

We shall say that a sequence of real-valued random variables $(X_n)_{n\in\N}$ satisfying $\E|X_n|^2<\infty$ for all $n\in\N$ fulfils a Berry-Esseen bound (BEB) with speed $(\varepsilon_n)_{n\in\N}$ if
$$
\sup_{t\in\R}\Big|\P\Big({X_n-\E[X_n]\over\sqrt{\Var X_n}}\leq t\Big)-\Phi(t)\Big| \leq c\,\varepsilon_n\,,
$$
where $c>0$ is a constant not depending on $n$ and $\Phi(\,\cdot\,)$ denotes the distribution function of a standard Gaussian random variable.

\begin{theorem}[BEB and MDP for the log-volume]\label{thm:CLTMDPSimplices}
Let $X_1,\ldots,X_{r+1}$ be chosen according to the Gaussian, the Beta or the spherical model.
\begin{itemize}
\item[(a)] For the Gaussian model define
$$
\varepsilon_n := {1\over\sqrt{rn}}\text{ if $r=o(n)$ or $r\sim\alpha n$}\quad\text{and}\quad \varepsilon_n:={1\over \sqrt{\log \left(\frac{n}{n-r+1}\right)}}\text{ if $n - r = o(n)$},
$$
where $\alpha\in(0,1)$.
Then $\cL_{n,r}$ satisfies a BEB with speed $\varepsilon_n$. Further, let $(a_n)_{n\in\N}$ be such that $a_n\to\infty$ and $a_n\varepsilon_n\to 0$, as $n\to\infty$. Then $\cL_{n,r}$ satisfies a MDP with speed $a_n$ rate function $I(x)={x^2\over 2}$.
\item[(a)] For the Beta model and the spherical model define
$$
\varepsilon_n := {1\over\sqrt{n}}\text{ if $r=o(n)$},\quad \varepsilon_n:={1\over n}\text{ if $r\sim\alpha n$}\quad\text{and}\quad \varepsilon_n:={1\over \sqrt{\log \left(\frac{n}{n-r+1}\right)}}\text{ if $n - r = o(n)$}
$$
with $\alpha\in(0,1)$.
Then $\cL_{n,r}$ satisfies a BEB with speed $\varepsilon_n$. Further, let $(a_n)_{n\in\N}$ be such that $a_n\to\infty$ and $a_n\varepsilon_n\to 0$, as $n\to\infty$. Then $\cL_{n,r}$ satisfies a MDP with speed $a_n$ rate function $I(x)={x^2\over 2}$.
\end{itemize}
\end{theorem}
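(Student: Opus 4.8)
The plan is to use the method of cumulants in the form developed by Saulis and Statulevi\v{c}ius and applied in \cite{DöringEichelsbacher}. After centering and normalizing, set $Y_{n,r} := (\cL_{n,r} - \E\cL_{n,r})/\sqrt{\Var\cL_{n,r}}$, so that $c^1[Y_{n,r}] = 0$ and $c^2[Y_{n,r}] = 1$. Both the Berry--Esseen bound and the moderate deviation principle follow from the abstract machinery once we verify a cumulant estimate of Statulevi\v{c}ius type,
\begin{equation*}
\big|c^m[Y_{n,r}]\big| = \frac{\big|c^m[\cL_{n,r}]\big|}{(\Var\cL_{n,r})^{m/2}} \le \frac{(m!)^{1+\gamma}}{\Delta_n^{\,m-2}}, \qquad m\ge 3,
\end{equation*}
for a suitable $\gamma\ge 0$ and a sequence $\Delta_n\to\infty$. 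The abstract theorem then yields a BEB with speed $\Delta_n^{-1/(1+2\gamma)}$ and, for every $a_n\to\infty$ with $a_n\,\Delta_n^{-1/(1+2\gamma)}\to 0$, the asserted MDP with rate $x^2/2$. The whole task reduces to identifying $\gamma$ and $\Delta_n$ in each regime so that $\Delta_n^{-1/(1+2\gamma)}$ equals the claimed $\varepsilon_n$; I expect $\gamma = 0$ throughout, so that $\varepsilon_n = \Delta_n^{-1}$ and the admissible MDP range is exactly $a_n\varepsilon_n\to 0$.

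Feeding in Theorem \ref{thm:Cumulants} together with the variance asymptotics, the straightforward regimes can be dispatched directly. For the Gaussian model with $r=o(n)$ or $r\sim\alpha n$, inserting $|c^m[\cL_{n,r}]|\le C^m (m-1)!\,r\,n^{1-m}$ and the matching variance gives $|c^m[Y_{n,r}]|\lesssim (C')^m (m-1)!\,(rn)^{-(m-2)/2}$, whence $\Delta_n\asymp\sqrt{rn}$ and $\varepsilon_n = 1/\sqrt{rn}$. In the regime $n-r=o(n)$ one uses instead the $r$-free uniform bounds ($|c^m[\cL_{n,r}]|\le 2(m-1)!$ in the Gaussian case, $\le 2\cdot 4^m m!$ in the Beta case) together with $\Var\cL_{n,r}\asymp \log\tfrac{n}{n-r+1}=:L_n$; this yields $\Delta_n\asymp\sqrt{L_n}$ and $\varepsilon_n = L_n^{-1/2}$, the bounded prefactor being harmless. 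The Beta model with $r\sim\alpha n$ is equally clean: there $\Var\cL_{n,r}$ tends to a positive constant while $r\,n^{1-m}\asymp n^{2-m}$, so one reads off $\Delta_n\asymp n$, i.e. $\varepsilon_n = 1/n$.

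The genuine obstacle is the Beta (hence spherical) model with $r=o(n)$ but $r\to\infty$. There the variance $\Var\cL_{n,r}\asymp 1/n$ (for $r=o(\sqrt n)$) carries \emph{no} factor $r$, whereas the bound $|c^m[\cL_{n,r}]|\le C^m m!\,r\,n^{1-m}$ does; inserting it naively produces only $\Delta_n\asymp\sqrt n/r$, i.e. the sub-optimal speed $r/\sqrt n$, which vanishes merely for $r=o(\sqrt n)$ and falls short of the claimed $1/\sqrt n$. To recover the optimal speed one must exploit the exact expression \eqref{eq:c_m_expression}: expanding each of the four polygamma contributions via $\psi^{(m-1)}(z) = (-1)^m (m-2)!\,z^{1-m}+O(z^{-m})$ (Lemma \ref{lem:AsymptoticPolygamma}), the leading $z^{1-m}$ terms combine so that the factor $r$ \emph{cancels}, leaving a bound of the form
\begin{equation*}
\big|c^m[\cL_{n,r}]\big| \le C^m m!\,n^{1-m}, \qquad m\ge 3,
\end{equation*}
free of the factor $r$ and uniform in $r=o(n)$. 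The core of the argument is to make this cancellation rigorous simultaneously for all $m\ge 3$, bounding the $O(z^{-m})$ remainders by \eqref{eq:psi_ineq}; these are of order $r\,n^{-m}$, hence negligible against $n^{1-m}$ by the factor $r/n\to 0$.

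With this refined estimate the normalization goes through: when $\Var\cL_{n,r}\asymp 1/n$ one obtains $\Delta_n\asymp\sqrt n$, and when $\sqrt n\ll r\ll n$, where $\Var\cL_{n,r}\asymp r^2/n^2$, one obtains $\Delta_n\asymp r$; since $1/r\le 1/\sqrt n$ in the latter range, a BEB with speed $1/\sqrt n$ holds throughout $r=o(n)$. The MDP in every regime then follows from the same cumulant estimate via the moderate-deviation half of the framework, valid precisely for $a_n\to\infty$ with $a_n\varepsilon_n\to 0$, and the spherical model is covered by setting $\nu=0$. I expect the polygamma cancellation, controlled uniformly in $m$, to be the only non-routine step.
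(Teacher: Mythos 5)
Your overall route is exactly the paper's: normalize to $Y_{n,r}=(\cL_{n,r}-\E\cL_{n,r})/\sqrt{\Var\cL_{n,r}}$, verify a Statulevi\v{c}ius-type condition $|c^m[Y_{n,r}]|\le (m!)^{1+\gamma}\Delta_n^{-(m-2)}$ from Theorem \ref{thm:Cumulants}, and invoke \cite[Theorem 1.1]{DoeringEichelsbacher} together with \cite[Corollary 2.1]{SaulisBuch}. Your handling of the Gaussian model in all regimes, of the Beta model with $r\sim\alpha n$, and of the regime $n-r=o(n)$ coincides with the paper's computation. You are also right that the Beta model with $r=o(n)$ and $r\to\infty$ is the delicate point: dividing the bound $C^m m!\,r\,n^{1-m}$ by $\big(r/(2(r+1)n)\big)^{m/2}$ only yields a quotient of order $C^m m!\,r\,n^{-(m-2)/2}$, hence $\Delta_n\asymp\sqrt n/r$ at $m=3$, and a finer argument is genuinely required to reach the claimed speed $1/\sqrt n$ (the paper's proof is, at this one step, less explicit than your analysis).

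However, your proposed refinement overstates the cancellation. In \eqref{eq:c_m_expression} the first and fourth terms combine, up to remainders that are indeed $O(C^m m!\,r\,n^{-m})$ by \eqref{eq:psi_ineq}, to $\frac{(-1)^m(m-2)!}{2}\big[\sum_{k=n-r+1}^{n}k^{1-m}-(r+1)n^{1-m}\big]$; but this bracket is \emph{not} $O(n^{1-m})$ uniformly in $r=o(n)$. Since each difference $k^{1-m}-n^{1-m}$ can be as large as about $(m-1)rn^{-m}$, the bracket is of order $(m-1)r^{2}n^{-m}$, which exceeds $n^{1-m}$ as soon as $r\gg\sqrt n$; already $|c^{3}[\cL_{n,r}]|\asymp r^{2}n^{-3}\gg n^{-2}$ there, so your claimed $r$-free bound $|c^m[\cL_{n,r}]|\le C^m m!\,n^{1-m}$ is false in the range $\sqrt n\ll r=o(n)$ — precisely the range in which you then insert it. The correct refined estimate is $|c^m[\cL_{n,r}]|\le C^m m!\,\big(r^{2}n^{-m}+n^{1-m}\big)\asymp C^m m!\,\Var\cL_{n,r}\cdot n^{2-m}$, using $\Var\cL_{n,r}\asymp r^{2}/n^{2}+1/n$. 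This repairs everything at a stroke: dividing by $(\Var\cL_{n,r})^{m/2}$ and using only $\Var\cL_{n,r}\ge c/n$ gives $|c^m[Y_{n,r}]|\le C'^m m!\,n^{-(m-2)/2}$ uniformly for $r=o(n)$, i.e.\ $\Delta_n\asymp\sqrt n$, and your case split at $r\asymp\sqrt n$ becomes unnecessary (your split does reach the right conclusion $\Delta_n\asymp r\ge\sqrt n$, but only because the error in the cumulant bound and the enlargement of the variance compensate each other by the same factor $r^{2}/n$). With this one correction your argument is complete, and the BEB and MDP conclusions in all regimes follow as you describe.
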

\begin{proof}
Let us define the normalized random variable $\widetilde{\cL}_{n,r}:=(\cL_{n,r}-\E[\cL_{n,r}])/\sqrt{\Var\cL_{n,r}}$. From Theorem \ref{thm:Cumulants} we conclude that, for $m\geq 3$,
$$
|c^m(\widetilde{\cL}_{n,r})|
=
\frac{|c^m(\cL_{n,r})|}{(\Var \cL_{n,r})^{m/2}}
\leq \begin{cases}
\frac{c_1^m (m-1)!}{(\sqrt {rn})^{m-2}}   &: r=o(n) \text{ or } r\sim\alpha n\\
\frac{c_2^m (m-1)!}{\left(\sqrt{\log \frac{n}{n-r+1}}\right)^{m}} &: n - r = o(n)
\end{cases}
$$
in the Gaussian case and
$$
|c^m(\widetilde{\cL}_{n,r})| \leq \begin{cases}
{|c^m[\cL_{n,r}]|\over(r/(2(r+1)n))^{m/2}} &: r=o(n)\\
{|c^m[\cL_{n,r}]|\over(\frac{1}{2}\log(\frac{1}{1-\alpha}) - \frac{\alpha}{2})^{m/2}} &: r\sim\alpha n\\
{|c^m[\cL_{n,r}]|\over(\frac{1}{2} \log \left(\frac{n}{n-r+1}\right) )^{m/2}} &: n - r = o(n)
\end{cases}
\leq \begin{cases}
c_4^m\,m!\, \big({1\over\sqrt{n}}\big)^{m-1} &: r=o(n)\\
c_5^m\,m!\, \big({1\over n}\big)^{m-1} &: r\sim\alpha n\\
c_6^m\,m!\, \left(\frac{1}{\sqrt{\log \left(\frac{n}{n-r+1}\right)}}\right)^{m - 2} &: n - r = o(n)
\end{cases}
$$
for the Beta and the spherical model with constants $c_1,\ldots,c_6>0$ not depending on $m$ and $n$. The result follows now from \cite[Theorem 1.1]{DoeringEichelsbacher} and \cite[Corollary 2.1]{SaulisBuch}.
\end{proof}

\begin{remark}
Starting with the cumulant bounds presented in Theorem \ref{thm:Cumulants} one can also derive
\begin{itemize}
\item[(i)] concentration inequalities,
\item[(ii)] bound for moments of all orders,
\item[(iii)] Cram\'er-Petrov type results concerning the relative error in the central limit theorem,
\item[(iv)] strong laws of large numbers
\end{itemize}
for the random variables $\widetilde{\cL}_{n,r}$ from the results presented in \cite[Chapter 2]{SaulisBuch} (see also \cite{GroteThäle,GroteThäle2}).
\end{remark}

While in the three cases $r=o(n)$, $r\sim\alpha n$ and $n-r=o(n)$ we were able to derive precise Berry-Esseen bounds by using cumulant bounds, we can state a `pure' central limit theorem for the log-volume in an even more general setup. The following result can directly be concluded by extracting subsequence and then by applying the result of Theorem \ref{thm:CLTMDPSimplices}.

\begin{corollary}[Central limit theorem for the log-volume]\label{cor:log_volume_CLT}
Let $r=r(n)$ be an arbitrary sequence of integers such that $r(n)\leq n$ for any $n\in\N$. Further, let for each $n\in\N$, $X_1,\ldots,X_{r+1}$ be independent random points chosen according to the Gaussian, the Beta or the spherical model, and put $\cL_{n,r}:=\log(r!\cV_{n,r})$. Then,
\begin{align*}
\frac{\cL_{n,r} - \bE[\cL_{n,r}]}{\sqrt{\Var\cL_{n,r}}} \todistr Z,
\end{align*}
where $Z\sim\cN(0,1)$ is a standard Gaussian random variable.
\end{corollary}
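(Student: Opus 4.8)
The plan is to reduce the general statement to the three regimes already handled in Theorem~\ref{thm:CLTMDPSimplices} by means of the subsequence principle for weak convergence. Recall that, since the topology of convergence in distribution on $\R$ is metrizable (for instance by the L\'evy metric), a sequence of random variables converges in distribution to a fixed limit $Z$ if and only if every subsequence admits a further subsequence that converges in distribution to $Z$. Writing $\widetilde{\cL}_{n,r}:=(\cL_{n,r}-\E[\cL_{n,r}])/\sqrt{\Var\cL_{n,r}}$, it therefore suffices to show that from every subsequence of $(\widetilde{\cL}_{n,r})_{n}$ one can extract a further subsequence along which $\widetilde{\cL}_{n,r}\todistr Z$.

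First I would fix an arbitrary subsequence $(n_k)_{k\in\N}$ and consider the bounded sequence $r(n_k)/n_k\in[0,1]$. By sequential compactness of $[0,1]$ there is a further subsequence, which I relabel as $(n_k)$ for brevity, along which $r(n_k)/n_k\to\beta$ for some $\beta\in[0,1]$. The key observation is that the three cases $\beta=0$, $\beta\in(0,1)$ and $\beta=1$ correspond exactly to the three regimes covered by Theorem~\ref{thm:CLTMDPSimplices}: if $\beta=0$ then $r=o(n)$ along $(n_k)$; if $\beta\in(0,1)$ then $r\sim\beta n$, which is the case $r\sim\alpha n$ with $\alpha=\beta$; and if $\beta=1$ then $n-r=o(n)$ along $(n_k)$. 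Thus every subsequential regime is admissible.

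In each of these three cases Theorem~\ref{thm:CLTMDPSimplices} provides a Berry--Esseen bound
\begin{equation*}
\sup_{t\in\R}\Big|\P\big(\widetilde{\cL}_{n,r}\leq t\big)-\Phi(t)\Big|\leq c\,\varepsilon_n
\end{equation*}
with a speed $\varepsilon_n$ that tends to $0$ along $(n_k)$. Indeed, for $\beta=0$ and $\beta\in(0,1)$ the corresponding speeds $1/\sqrt{rn}$, $1/\sqrt n$ or $1/n$ clearly vanish as $n\to\infty$, while for $\beta=1$ one has $\varepsilon_n=1/\sqrt{\log(n/(n-r+1))}$ and the condition $n-r=o(n)$ forces $(n-r+1)/n\to 0$, hence $n/(n-r+1)\to\infty$ and $\varepsilon_n\to 0$. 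Consequently the left-hand side above tends to $0$, which is precisely convergence in distribution $\widetilde{\cL}_{n,r}\todistr Z$ along $(n_k)$. By the subsequence principle the full sequence converges in distribution to $Z$, completing the proof.

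The argument is essentially bookkeeping, so there is no serious obstacle; the only points needing care are verifying that the three regimes of Theorem~\ref{thm:CLTMDPSimplices} genuinely exhaust all subsequential limits $\beta\in[0,1]$ (which they do, the open interval being handled by the $r\sim\alpha n$ case) and checking that the Berry--Esseen speed tends to $0$ in the boundary regime $\beta=1$, where this hinges on $n/(n-r+1)\to\infty$.
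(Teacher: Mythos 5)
Your proposal is correct and is precisely the argument the paper intends: the authors state that the corollary ``can directly be concluded by extracting subsequences and then applying Theorem~\ref{thm:CLTMDPSimplices}'', and your write-up simply fills in the details (compactness of $[0,1]$ to extract $r(n_k)/n_k\to\beta$, the observation that $\beta=0$, $\beta\in(0,1)$, $\beta=1$ exhaust the three regimes, and the vanishing of the Berry--Esseen speeds, including $\varepsilon_n\to 0$ when $n/(n-r+1)\to\infty$). No gap; the only point worth a remark is that applying Theorem~\ref{thm:CLTMDPSimplices} along a subsequence is legitimate because its proof rests on cumulant bounds valid for every pair $(n,r)$ with constants independent of $n$.
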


\subsection{Central and non-central limit theorem for the volume}

After having investigated asymptotic normality for the log-volume of a random simplex, we turn now to its actual volume, that is, the random variable $\cV_{n,r}$.

\begin{theorem}[Distributional limit theorem for the volume]\label{Volume}
Let $X_1,\ldots,X_{r+1}$ be chosen according to the Gaussian model, the Beta model or the spherical model and let $\alpha\in(0,1)$. Let $Z\sim\cN(0,1)$ be a standard Gaussian random variable.
\begin{enumerate}
\item If $r=o(n)$, then for suitable normalizing sequences $a_{n,r}$ and $b_{n,r}$ the following convergence in distribution holds, as $n\to\infty$:
$$
\frac{\cV_{n,r} - a_{n,r}}{b_{n,r}}\todistr Z.
$$
\item If $r\sim \alpha n$ for some $\alpha\in (0,1)$, then for a suitable normalizing sequence $b_{n,r}$ we have
$$
\frac{\cV_{n,r}}{b_{n,r}} \todistr
\begin{cases}
e^{\sqrt{\frac 12 \log \frac 1 {1-\alpha}}\, Z} &: \text{in the Gaussian model}\\
e^{\sqrt{\frac 12 \log \frac 1 {1-\alpha} - \frac \alpha 2}\, Z} &: \text{in the Beta or spherical model}.\\
\end{cases}
$$
\end{enumerate}
\end{theorem}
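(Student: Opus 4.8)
The plan is to deduce both statements from the central limit theorem for the logarithmic volume $\cL_{n,r}=\log(r!\cV_{n,r})$ provided by Corollary~\ref{cor:log_volume_CLT}, using the deterministic identity $\cV_{n,r}=\frac1{r!}\,e^{\cL_{n,r}}$. Write $\mu_{n,r}:=\E[\cL_{n,r}]$ and $\sigma_{n,r}:=\sqrt{\Var\cL_{n,r}}$, and set $W_{n,r}:=(\cL_{n,r}-\mu_{n,r})/\sigma_{n,r}$, so that Corollary~\ref{cor:log_volume_CLT} reads $W_{n,r}\todistr Z$ and, by construction, $\cL_{n,r}=\mu_{n,r}+\sigma_{n,r}W_{n,r}$. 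Consequently $\cV_{n,r}=\frac1{r!}\,e^{\mu_{n,r}}\,e^{\sigma_{n,r}W_{n,r}}$, and the whole argument reduces to understanding the limit of $e^{\sigma_{n,r}W_{n,r}}$, for which the decisive quantity is the limiting behaviour of the deterministic prefactor $\sigma_{n,r}$ recorded in Theorem~\ref{thm:Cumulants}.

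Consider first the regime $r\sim\alpha n$ of part~(2). Here Theorem~\ref{thm:Cumulants} gives $\sigma_{n,r}^2=\Var\cL_{n,r}\to\sigma^2$, where $\sigma^2=\frac12\log\frac1{1-\alpha}$ in the Gaussian model and $\sigma^2=\frac12\log\frac1{1-\alpha}-\frac\alpha2$ in the Beta and spherical models; in all cases $\sigma^2>0$, the latter because $-\log(1-\alpha)>\alpha$ for $\alpha\in(0,1)$, so that the limiting law is a genuine (nondegenerate) log-normal. Since $\sigma_{n,r}$ is deterministic and $\sigma_{n,r}\to\sigma$, Slutsky's theorem yields $\sigma_{n,r}W_{n,r}\todistr\sigma Z$, and the continuous mapping theorem applied to the exponential gives $e^{\sigma_{n,r}W_{n,r}}\todistr e^{\sigma Z}$. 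Choosing $b_{n,r}:=\frac1{r!}\,e^{\mu_{n,r}}$ we obtain $\cV_{n,r}/b_{n,r}=e^{\sigma_{n,r}W_{n,r}}\todistr e^{\sigma Z}$, which is exactly the claimed log-normal limit.

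The regime $r=o(n)$ of part~(1) is different because now $\sigma_{n,r}\to0$, so $e^{\sigma_{n,r}W_{n,r}}\toprobab 1$ and a direct application of the continuous mapping theorem would only produce the degenerate constant $1$. One therefore has to rescale and perform a first-order (delta-method) expansion of $\exp$ around the origin. Setting $a_{n,r}:=\frac1{r!}\,e^{\mu_{n,r}}$ and $b_{n,r}:=\sigma_{n,r}\,a_{n,r}$, the normalized variable becomes
$$
\frac{\cV_{n,r}-a_{n,r}}{b_{n,r}}=\frac{e^{\sigma_{n,r}W_{n,r}}-1}{\sigma_{n,r}}=W_{n,r}\cdot h(\sigma_{n,r}W_{n,r}),\qquad h(y):=\frac{e^y-1}{y},\ \ h(0):=1,
$$
where $h$ is continuous on $\R$. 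Since $W_{n,r}\todistr Z$ the family $(W_{n,r})_{n}$ is tight, and combined with $\sigma_{n,r}\to0$ this gives $\sigma_{n,r}W_{n,r}\toprobab 0$; by the continuity of $h$ and the continuous mapping theorem we then get $h(\sigma_{n,r}W_{n,r})\toprobab 1$. Finally Slutsky's theorem applied to the product $W_{n,r}\cdot h(\sigma_{n,r}W_{n,r})$ yields convergence to $Z\cdot 1=Z$, as required.

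The routine parts are the two Slutsky/continuous-mapping steps; the only point demanding genuine care is the delta-method step in part~(1). Concretely, the heart of the matter is to justify $\sigma_{n,r}W_{n,r}\toprobab 0$ from the mere tightness of $(W_{n,r})_n$ together with $\sigma_{n,r}\to0$ (for any $\eps,\delta>0$, tightness furnishes $M$ with $\sup_n\P(|W_{n,r}|>M)<\delta$, and for $n$ large enough $\eps/\sigma_{n,r}>M$, so that $\P(|\sigma_{n,r}W_{n,r}|>\eps)\le\P(|W_{n,r}|>M)<\delta$), and to check that the resulting centering and scaling $a_{n,r}$, $b_{n,r}$ indeed qualify as the \emph{suitable normalizing sequences} in the statement. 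No input beyond Corollary~\ref{cor:log_volume_CLT} and the variance asymptotics of Theorem~\ref{thm:Cumulants} is needed.
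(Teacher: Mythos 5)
Your proposal is correct and follows essentially the same route as the paper: exponentiate the log-volume CLT of Corollary~\ref{cor:log_volume_CLT}, use the variance asymptotics of Theorem~\ref{thm:Cumulants} to separate the case $\sigma_{n,r}\to\sigma>0$ (giving the log-normal limit) from $\sigma_{n,r}\to 0$, and in the latter case apply the delta method via the factor $(e^{y}-1)/y$ with the same centering $a_{n,r}=e^{\E\log\cV_{n,r}}$ and scaling $b_{n,r}=\sigma_{n,r}\,a_{n,r}$ as in the paper. The only difference is technical: where the paper invokes the Skorokhod--Dudley almost-sure representation, you pass to the limit via tightness, Slutsky's theorem and the continuous mapping theorem, which is an equally valid implementation of the same idea.
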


\begin{remark}
In the third case, i.e., if $n-r=o(n)$, there is no non-trivial distributional limit theorem for the random variable $\cV_{n,r}$ under affine re-scaling. The reason is that the variance of $\log \cV_{n,r}$ tends to $+\infty$ in this case. 
\end{remark}

The main ingredient in the proof of Theorem~\ref{Volume} in the case where $r=o(n)$ is the so-called 'Delta-Method', which is well known and commonly used in statistics, cf.\ \cite[Lemma 5.3.3]{BickelDoksum}.

\begin{proof}[Proof of Theorem \ref{Volume}]
From Corollary~\ref{cor:log_volume_CLT} we know that with the sequences $c_{n,r}= \E \log \cV_{n,r}$ and $d_{n,r}= \sqrt{\Var \log \cV_{n,r}}$ it holds that
$$
\frac{\log \cV_{n,r} -  c_{n,r}}{d_{n,r}} \todistr Z.
$$
By the Skorokhod--Dudley lemma~\cite[Theorem 4.30]{Kallenberg}, we can construct random variables $\cV_{n,r}^*$ and $Z^*$ on a different probability space such that $\cV_{n,r}^* \overset{d}{=} \cV_{n,r}$, $Z^* \overset{d}{=} Z$, and
$$
Z_n^* := \frac{\log \cV_{n,r}^* - c_{n,r}}{d_{n,r}} \toas Z^*.
$$
So, we have $\cV_{n,r}^* = e^{d_{n,r} Z_n^* + c_{n,r}}$, where $Z_n^*\to Z^*$ a.s., as $n\to\infty$.

Consider first the Gaussian model in the case $r\sim \alpha n$. Then, by Theorem~\ref{thm:Cumulants}(a)  we have
$$
d_{n,r} =\sqrt{\Var \log \cV_{n,r}} \sim \sqrt{\frac 12 \log \frac 1 {1-\alpha}}.
$$
With the aid of Slutsky's lemma it follows that
$$
\frac{\cV_{n,r}^*}{e^{c_{n,r}}} = e^{d_{n,r} Z_n^*} \toas e^{\sqrt{\frac 12 \log \frac 1 {1-\alpha}}\, Z^*}.
$$
Passing back to the original probability space, we obtain the distributional convergence
$$
\frac{\cV_{n,r}}{e^{c_{n,r}}}\todistr e^{\sqrt{\frac 12 \log \frac 1 {1-\alpha}}\, Z}.
$$
The proof for the Beta or spherical model in the case $r\sim \alpha n$ is similar, only the expression for the asymptotic variance being different.

Consider now the Gaussian model in the case $r=o(n)$. Then, by Theorem~\ref{thm:Cumulants}(a),
$$
d_{n,r} =\sqrt{\Var \log \cV_{n,r}} \ton 0.
$$
Using the formula $\lim_{x\to 0} (e^x-1)/x = 1$ and the Slutsky lemma, we obtain
$$
\frac{\frac{\cV_{n,r}^*}{e^{c_{n,r}}} - 1}{d_{n,r}}  = \frac{ e^{d_{n,r} Z_n^*} -1}{d_{n,r}Z_n^*} \cdot Z_n^* \toas Z^*.
$$
Passing back to the original probability space and taking $b_{n,r} = e^{c_{n,r}}d_{n,r}$ and $a_{n, r} =e^{c_{n,r}}$, we obtain the required distributional convergence.
\end{proof}

\section{Mod-$\phi$ convergence}\label{sec:mod_phi}
\subsection{Definition}
Mod-$\phi$ convergence is a powerful notion that was introduced and studied in \cite{jacod_etal,kowalski_nikeghbali,delbaen_etal,kowalski_najnudel_etal,Feray1}, to mention only some references. Once an appropriate version of  mod-$\phi$ convergence has been established, one gets for free a whole collection of limit theorems including the central limit theorem, the local limit theorem, moderate and large deviations, and a Cram\'er--Petrov asymptotic expansion~\cite{Feray1}.

The aim of the present Section~\ref{sec:mod_phi} is to establish mod-$\phi$ convergence for the log-volumes of the random simplices. Note that the mod-$\phi$ convergence we establish in the present
section together with the general results from \cite{Feray1} also
imply some of the results we proved in the previous section by means
of the cumulant method. On the other hand, we would like to emphasize
that this is not the case if $r\sim \alpha n$, for example.\\
 There are many definitions of mod-$\phi$ convergence. Here, we use one of the strongest ones, c.f.~\cite[Definition 1.1]{Feray1}.
Consider a sequence of random variables $(X_n)_{n\in\N}$ with moment generating functions $\varphi_n(t) = \E [e^{tX_n}]$ defined on some strip $S= \{z\in \mathbb C: c_- < \text{Re}\, t < c_+\}$. The sequence $(X_n)_{n\in\N}$ \textit{converges in the mod-$\phi$ sense}, where $\phi$ is an infinite-divisible distribution with moment generating function $\int_{-\infty}^{\infty} e^{tx}\phi(dx) = e^{\eta(t)}$, if
$$
\lim_{n\to\infty} \frac{\E [e^{tX_n}]}{e^{w_n \eta(t)}} = \psi(t)
$$
locally uniformly on $S$, where $(w_n)_{n\in\N}$ is some sequence converging to $+\infty$, and $\psi(t)$ is an analytic function on $S$.  As explained in references cited above,  mod-$\phi$ convergence roughly means that $X_n$ has approximately the same distribution as the $w_n$-th convolution power of the infinitely divisible distribution $\phi$. The ``difference'' between these distributions is measured by the ``\textit{limit function}'' $\psi$ that plays a crucial r\^{o}le in the theory.

\subsection{The Barnes $G$-function}
The Barnes function is an entire function of the complex argument $z$ defined by the Weierstrass product
$$
G(z) = (2\pi)^{z/2} \eee^{-\frac 12 (z+ (1+\gamma) z^2)} \prod_{k=1}^\infty \left(1+\frac zk\right)^k \eee^{\frac {z^2}{2k} - z},
$$
where $\gamma$ is the Euler constant.
The Barnes $G$-function satisfies the functional equation
$$
G(z+1) = \Gamma(z) G(z).
$$
By induction, one deduces that for all $n\in\N_0$,
\begin{equation}\label{eq:prod_gamma_functions}
\prod_{k=1}^n \Gamma(k+z) = \frac{G(z+n+1)}{G(z+1)}.
\end{equation}
We shall need the Stirling-type formula for $G$, see~\cite[p.~285]{barnes},
\begin{equation}\label{eq:stirling_for_G}
\log G(z+1) = \frac 12 z^2 \log z - \frac 34 z^2 + \frac z2 \log (2\pi) - \frac 1 {12} \log z + \zeta'(-1) + O(1/z),
\end{equation}
uniformly as $|z|\to +\infty$ such that $|\arg z| < \pi-\eps$,  where $\zeta'(-1)$ is the derivative of the Riemann $\zeta$-function. The value of $\zeta'(-1)$ can be expressed through the Glaisher--Kinkelin constant, but it cancels in all our calculations because we use~\eqref{eq:stirling_for_G} only via the following lemma.

\begin{lemma}\label{lem:barnes_G_diff_asympt}
Let  $|z|\to\infty$ such that $|\arg z| < \pi-\eps$. Let also $a=a(z)\in \C$ be such that $a/z\to 0$. Then, we have
$$
\log G(z+a+1) - \log G(z+1) = a \left(z\log z - z + \log \sqrt{2\pi}\right) + \frac 12 a^2 \log z +  O\left(\frac {|a|^3+1}{z}\right).
$$
\end{lemma}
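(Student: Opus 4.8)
The plan is to apply the Stirling-type expansion \eqref{eq:stirling_for_G} to both Barnes $G$-values on the left-hand side. Writing $w:=z+a$, the hypothesis $a/z\to 0$ guarantees that $w=z(1+a/z)$ stays in the sector $|\arg w|<\pi-\eps$ for $|z|$ large, so \eqref{eq:stirling_for_G} also applies to $\log G(w+1)$, with error $O(1/w)=O(1/z)$. Subtracting the two expansions, the constant $\zeta'(-1)$ cancels and I am left with the difference of the four explicit terms
$$
\tfrac12\big(w^2\log w-z^2\log z\big)-\tfrac34\big(w^2-z^2\big)+\tfrac12\log(2\pi)\,(w-z)-\tfrac1{12}\big(\log w-\log z\big)+O(1/z).
$$

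First I would treat the genuinely nonlinear piece $\tfrac12(w^2\log w-z^2\log z)$. Using $w^2=z^2+2az+a^2$ together with the Mercator expansion $\log w=\log z+\log(1+a/z)=\log z+\tfrac az-\tfrac{a^2}{2z^2}+O(|a|^3/|z|^3)$ — valid once $|a/z|<\tfrac12$, which holds eventually — I would multiply out and keep every term not absorbed by the target error, arriving at
$$
\tfrac12\big(w^2\log w-z^2\log z\big)=az\log z+\tfrac12 a^2\log z+\tfrac12 az+\tfrac34 a^2+O\!\Big(\tfrac{|a|^3+1}{z}\Big).
$$
Here each of the cross-contributions $z^2\cdot O(|a/z|^3)$, $2az\cdot O(|a/z|^3)$, $a^2\cdot(a/z)$, etc., is folded into the error using the elementary bounds $|a|^4/z^2\le|a|^3/z$ and $|a|^k/z\le(|a|^3+1)/z$ for $k\le 3$, both legitimate because $|a|/z\le 1$ for large $|z|$ and $|a|\le 1+|a|^3$.

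It then remains to assemble the linear pieces: $-\tfrac34(w^2-z^2)=-\tfrac32 az-\tfrac34 a^2$, the term $\tfrac12\log(2\pi)(w-z)=\tfrac a2\log(2\pi)$, and $-\tfrac1{12}\log(w/z)=-\tfrac1{12}\log(1+a/z)=O(|a|/z)$. Adding everything, the $az$-terms combine as $\tfrac12 az-\tfrac32 az=-az$ and the $a^2$-terms cancel, $\tfrac34 a^2-\tfrac34 a^2=0$, leaving exactly $az\log z-az+\tfrac a2\log(2\pi)+\tfrac12 a^2\log z$, which is the claimed main term $a(z\log z-z+\log\sqrt{2\pi})+\tfrac12 a^2\log z$; all leftover contributions $O(|a|/z)$, $O(1/z)$ and $O(|a|^3/z)$ collapse into $O((|a|^3+1)/z)$. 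The only delicate point — and hence the main obstacle — is the uniform bookkeeping of these errors: since $a$ is allowed to be large (only $a/z\to 0$ is assumed), one must verify at each step that the cross-terms such as $|a|^4/z^2$ and the quadratic remainder in the logarithm expansion are genuinely dominated by $(|a|^3+1)/z$, rather than being discarded heuristically.
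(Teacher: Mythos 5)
Your proposal is correct and follows essentially the same route as the paper's proof: apply the Stirling-type formula \eqref{eq:stirling_for_G} to both terms, expand $(z+a)^2\log(z+a)-z^2\log z$ via $\log(1+a/z)=\frac az-\frac{a^2}{2z^2}+O(|a|^3/|z|^3)$, and collect the linear and quadratic pieces so that the $az$-terms combine to $-az$ and the $a^2$-terms cancel. Your explicit verification that $w=z+a$ stays in the sector and that the cross-terms like $|a|^4/|z|^2$ are dominated by $(|a|^3+1)/|z|$ is sound bookkeeping that the paper leaves implicit.
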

\begin{proof}
Applying~\eqref{eq:stirling_for_G} we obtain that
$$
\log G(z+a+1) - \log G(z+1) = \frac 12 A_n + B_n + C_n + D_n + O(1/z),
$$
where
\begin{align*}
A_n
&=
(z+a)^2 \log (z+a) -z^2 \log z\\
&=
(z^2+a^2 + 2za) \left(\log z + \frac az - \frac {a^2}{2z^2} + O\left(\frac {a^3}{z^3}\right)\right) -z^2 \log z\\
&=
za - \frac 12 a^2 + a \log z + 2za \log z + 2a^2 + O\left(\frac {a^3}{z}\right),\\
B_n &= -\frac 34 \left((z+a)^2 - z^2 \right) = -\frac 34 a^2 - \frac 32 za, \\
C_n &= \frac 12 a \log (2\pi),\\
D_n &= -\frac 1 {12} (\log(z+a) - \log z) = O\left(\frac az\right).
\end{align*}
Taking everything together we get
$$
\log G(z+a+1) - \log G(z+1) =  a \left(z\log z - z + \log \sqrt{2\pi}\right) + \frac 12 a^2 \log z + O\left(\frac {|a|^3+1}{z}\right)
$$
and complete the proof of the lemma.
\end{proof}

\subsection{Mod-$\phi$ convergence for fixed $r\in \N$}

Recall that $\mathcal V_{n,r}$ denotes the volume of an $r$-dimensional random simplex in $\R^n$ whose $r+1$ vertices are distributed according to one of the four models presented in Section $1$.
We define, as usual, $\cL_{n,r} := \log (r! \cV_{n,r})$.
The next two propositions show that if $r\in\N$ is fixed, we have mod-$\phi$ convergence.

\begin{proposition}\label{prop:mod_phi_constant_r}
Fix some $r\in\N$ and consider the Gaussian model.   Then, as $n\to\infty$, the sequence  $n(\cL_{n,r}- \frac r2 \log n-{1\over 2}\log(r+1))$ converges in the mod-$\phi$ sense with $\eta(t) =\frac{1}{2}( (t+1)\log (t+1) - t)$  and parameter $w_n = rn$, namely
$$
\lim_{n\to\infty} \frac{\E \eee^{tn (\cL_{n,r}- \frac r2 \log n-{1\over 2}\log(r+1))}}{\eee^{rn \eta(t)}} = (t+1)^{-\frac {r(r+1)}{4} }
$$
uniformly as long as $t$ stays in any compact subset of $\C\setminus(-\infty,-1)$.
\end{proposition}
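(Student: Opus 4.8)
The plan is to read off the moment generating function of the centred and rescaled variable $Y_n := n(\cL_{n,r} - \frac r2 \log n - \frac12 \log(r+1))$ directly from Theorem~\ref{theo:vol_Simplices_Moments}(a) and then to extract its large-$n$ asymptotics by Stirling's formula. Recalling from the proof of Theorem~\ref{thm:Cumulants} that the moment generating function $M_{n,r}(z) = \E[(r!\cV_{n,r})^z]$ of $\cL_{n,r}$ satisfies $\log M_{n,r}(z) = S_{n,r}(z) + \frac z2 \log(r+1) + \frac{zr}{2}\log 2$, I would substitute $z = tn$ and subtract the centring terms. The two $\log(r+1)$ contributions cancel, leaving
$$
\log \E[\eee^{tY_n}] = S_{n,r}(tn) + \frac{tnr}{2}\log 2 - \frac{trn}{2}\log n,
$$
so the entire problem reduces to the asymptotics of $S_{n,r}(tn) = \sum_{j=1}^r \big[\log\Gamma(\tfrac{n-r+j}{2} + \tfrac{tn}{2}) - \log\Gamma(\tfrac{n-r+j}{2})\big]$.

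Since $r$ is fixed, this is a sum of finitely many terms, so the Barnes $G$-function is unnecessary and it suffices to apply the second-order Stirling expansion $\log\Gamma(w) = (w-\tfrac12)\log w - w + \tfrac12\log(2\pi) + O(1/w)$ to each of the $2r$ Gamma factors. Writing $z_j := \frac{n-r+j}{2} = \frac n2 + c_j$ with the fixed shift $c_j := \frac{j-r}{2}$, and $w_j := z_j + \frac{tn}{2} = \frac{(1+t)n}{2} + c_j$, I would expand $w_j\log w_j$, $z_j\log z_j$ and $\log\frac{w_j}{z_j}$ to the required order using $\log(1+\frac{2c_j}{n}) = \frac{2c_j}{n} + O(n^{-2})$. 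The decisive bookkeeping is that the $\frac12\log(2\pi)$ constants cancel in each difference; that the quantity $\frac{tn}{2}\log\frac n2$ arising from every $j$ precisely absorbs the centring terms $\frac{tnr}{2}\log 2 - \frac{trn}{2}\log n$; and that the surviving term of order $n$ equals $\frac{rn}{2}\big[(1+t)\log(1+t) - t\big] = rn\,\eta(t)$, which pins down the infinitely divisible exponent $\eta$ and the parameter $w_n = rn$.

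It then remains to collect the contributions of order one, which assemble into the limit function. Each difference contributes $-\frac12\log\frac{w_j}{z_j} \to -\frac12\log(1+t)$ from the $-\frac12\log w$ part of Stirling, together with $c_j\log(1+t)$ coming from the expansion of $w_j\log w_j - z_j\log z_j$; summing over $j$ and using $\sum_{j=1}^r c_j = -\frac{r(r-1)}{4}$ yields the total constant $-\big[\frac r2 + \frac{r(r-1)}{4}\big]\log(1+t) = -\frac{r(r+1)}{4}\log(1+t)$. Exponentiating gives $\E[\eee^{tY_n}]/\eee^{rn\eta(t)} \to (1+t)^{-r(r+1)/4}$, the asserted limit function $\psi$.

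I expect the main point requiring care to be not the value of $\psi$ but the local uniformity in $t$. One must check that, for $t$ restricted to a compact set bounded away from the half-line $\{t+1 \in (-\infty,0]\}$ (in particular away from $-1$), the arguments $w_j = \frac{(1+t)n}{2} + c_j$ eventually lie in a fixed sector $|\arg w_j| < \pi - \eps$ and satisfy $|w_j| \geq c\, n$ for some $c>0$; this is where the exclusion of the ray enters, since there $1+t$ stays in a compact subset of the slit plane $\C\setminus(-\infty,0]$, keeping $\arg(1+t)$ away from $\pm\pi$ and $|1+t|$ bounded below. Granting this, the Stirling expansion and its $O(1/n)$ remainder hold uniformly, the error tends to $0$ uniformly on the compactum, and the convergence to $\psi(t) = (1+t)^{-r(r+1)/4}$ is locally uniform, which is exactly the mod-$\phi$ statement with $\eta(t) = \frac12((t+1)\log(t+1) - t)$ and $w_n = rn$.
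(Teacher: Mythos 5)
Your proposal is correct and follows essentially the same route as the paper's proof: both read the moment generating function off Theorem~\ref{theo:vol_Simplices_Moments}(a) and extract the large-$n$ asymptotics of the $r$ Gamma-ratios term by term — the paper via the packaged formula~\eqref{GammaLimit1}, you via the raw second-order Stirling expansion — arriving at the same cancellations, the same order-$n$ term $rn\,\eta(t)$, and the same constant $-\frac{r(r+1)}{4}\log(1+t)$. Your explicit uniformity check (keeping $1+t$ in a compact subset of the slit plane, hence $|\arg w_j|<\pi-\eps$ and $|w_j|\geq cn$) is, if anything, slightly more careful than the paper's tacit appeal to the uniformity in~\eqref{GammaLimit1}, and your observation that $t=-1$ should be excluded is accurate.
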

\begin{proof} 
An important formula we will often use describes the asymptotic behaviour of the Gamma function; it can be found in \cite[Eq.~6.1.39 on p.~257]{Abramovitz} or derived from the Stirling formula,  and reads as follows. For fixed $a>0$, $b\in \R$ it holds that
\begin{align}\label{GammaLimit1}
\Gamma(az + b) \sim (2 \pi)^{1/2}\, \exp(-az)\, (az)^{az+b-1/2}, \quad \text{as} \quad |z| \rightarrow \infty, \; |\arg z| < \pi -\eps.
\end{align}
From the moment formula in Theorem \ref{theo:vol_Simplices_Moments}(a) we obtain
\begin{align*}
\E e^{tn\cL_{n,r}} = (r+1)^{tn\over 2}2^{tnr\over 2}\prod_{j=1}^r{\Gamma\Big({(t+1)n-r+j\over 2}\Big)\over\Gamma\Big({n-r+j\over 2}\Big)}.
\end{align*}
Using \eqref{GammaLimit1} we deduce that
\begin{align}
\prod_{j=1}^r{\Gamma\Big({(t+1)n-r+j\over 2}\Big)\over\Gamma\Big({n-r+j\over 2}\Big)} &\sim \prod_{j=1}^r e^{-{tn\over 2}}\Big({n\over 2}\Big)^{tn\over 2}(t+1)^{{(t+1)n\over 2}+{j-r-1\over 2}}\notag \\
&=e^{-{tnr\over 2}}\Big({n\over 2}\Big)^{rtn\over 2}(t+1)^{\big({(t+1)n\over 2}-{1\over 2}\big)r-{r(r-1)\over 4}}. \label{eq:tech1}
\end{align}
Thus,
$$
\E e^{tn \cL_{n,r}} \sim (r+1)^{tn\over 2}e^{-{tnr\over 2}}n^{tnr\over 2}(t+1)^{\big({(t+1)n\over 2}-{1\over 2}\big)r-{r(r-1)\over 4}}.
$$
Taking the logarithm and subtracting $\frac r2 \log n$ and ${1\over 2}\log(r+1)$, we conclude that
\begin{align}\label{eq:GaussModelo(n)ModPhi}
\log \E \eee^{tn (\cL_{n,r}- \frac r2 \log n-{1\over 2}\log(r+1))} =
{nr\over 2}\Big((t+1)\log(t+1)-t\Big)-{r(r+1)\over 4}\log(t+1) + o(1)
\end{align}
and the result follows.
\end{proof}

\begin{remark}\label{rem:ModPhiGaussr=o(n)}
From the previous proof it easily follows that the asymptotic relation \eqref{eq:GaussModelo(n)ModPhi} is still valid if $r$ growths with $n$ in such a way that $r=o(n)$. This observation will be used below in the context of large deviation principles.
\end{remark}

\begin{proposition}\label{prop:BetaMomentGeneratingFunction}
Fix some $r\in\N$ and consider the Beta or the spherical model. Then, $n\cL_{n,r}$ converges in the mod-$\phi$ sense with
	$$
	\eta(t) = {(r+1)(t+1)\over 2}\log((r+1)(t+1))-{r(t+1)+1\over 2}\log(r(t+1)+1)-{t+1\over 2}\log(t+1)
	$$
	and parameter $w_n=n$, namely
	$$
	\lim_{n\to\infty}{\E e^{tn\cL_{n,r}}\over e^{n\eta(t)}} = (1+t)^{{1-\nu(r+1)\over 2}-{r(r-1)\over 4}}\bigg({(r+1)(t+1)\over r(t+1)+1}\bigg)^{\nu(r+1)-2r-1\over 2}
	$$
	uniformly as long as  $t$ stays in any compact subset of $\C\setminus(-\infty,-1)$.
\end{proposition}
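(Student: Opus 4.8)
The plan is to mirror the argument used for the Gaussian model in Proposition~\ref{prop:mod_phi_constant_r}, now starting from the moment formula in Theorem~\ref{theo:vol_Simplices_Moments}(b). Since $\cL_{n,r}=\log(r!\cV_{n,r})$, one has $\E\,\eee^{tn\cL_{n,r}}=\E[(r!\cV_{n,r})^{tn}]$, which is exactly that moment formula evaluated at $k=tn/2$. For real $t\ge 0$ this is immediate, and as both sides are analytic in $t$ the identity extends by analytic continuation to a complex neighbourhood of $(-1,\infty)$, where for large $n$ no Gamma argument meets a pole. Substituting $k=tn/2$ decomposes $\E\,\eee^{tn\cL_{n,r}}$ into three blocks: the product $\prod_{j=1}^r\Gamma(\tfrac{(t+1)n-r+j}{2})/\Gamma(\tfrac{n-r+j}{2})$ already analysed in~\eqref{eq:tech1}; the factor $\bigl(\Gamma(\tfrac{n+\nu}{2})/\Gamma(\tfrac{(t+1)n+\nu}{2})\bigr)^{r+1}$; and the final ratio $\Gamma(A+(r+1)\tfrac{tn}{2})/\Gamma(A+r\tfrac{tn}{2})$ with $A=\tfrac{(r+1)(n+\nu)-2r}{2}$.

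The next step is to apply the asymptotics~\eqref{GammaLimit1} to each block with the appropriate constants. For the middle factor one uses $a=\tfrac12$ in the numerator and $a=\tfrac{t+1}{2}$ in the denominator; for the final ratio one reads off $a=\tfrac{(r+1)(t+1)}{2}$ and $a=\tfrac{r(t+1)+1}{2}$ with common $b=\tfrac{(r+1)\nu-2r}{2}$. A consistency check I would carry out first is that all exponential factors $\eee^{ctn}$ and all powers $(n/2)^{ctn}$ from the three blocks cancel exactly: their exponents add up to $-\tfrac{tnr}{2}+\tfrac{(r+1)tn}{2}-\tfrac{tn}{2}=0$, and likewise for the powers of $n/2$. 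This cancellation is precisely why $n\cL_{n,r}$ itself (with no recentring) is the correct object here, in contrast with the Gaussian case.

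What then remains are only powers of the $n$-independent bases $t+1$, $(r+1)(t+1)$ and $r(t+1)+1$, whose exponents are affine in $n$. Collecting the coefficients of $n$ reproduces, after simplification, exactly the stated
$$
\eta(t)={(r+1)(t+1)\over 2}\log((r+1)(t+1))-{r(t+1)+1\over 2}\log(r(t+1)+1)-{t+1\over 2}\log(t+1),
$$
while the $n$-free remainders assemble into $\log\psi(t)$ with the stated limit function. Here the exponent of $(1+t)$ collects $-\tfrac r2-\tfrac{r(r-1)}{4}$ from the first block and $\tfrac{(r+1)(1-\nu)}{2}$ from the middle block (which sum to $\tfrac{1-\nu(r+1)}{2}-\tfrac{r(r-1)}{4}$), whereas the last block alone contributes $\tfrac{\nu(r+1)-2r-1}{2}$ as the exponent of $\tfrac{(r+1)(t+1)}{r(t+1)+1}$. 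Since~\eqref{GammaLimit1} is uniform on $|\arg z|<\pi-\eps$, the error is $o(1)$ locally uniformly, giving the claimed convergence on compact subsets of $\C\setminus(-\infty,-1)$.

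I expect the only genuine difficulty to be the bookkeeping: there are many terms and sign errors are easy, particularly in the final ratio where the arguments $A\pm\tfrac{tn}{2}$ carry the nontrivial constant $b$ and where the surviving exponent is $b-\tfrac12=\tfrac{\nu(r+1)-2r-1}{2}$. The two independent checks above---the exact cancellation of the $\eee^{ctn}$ and $(n/2)^{ctn}$ factors, and the separate matching of the exponents of $(1+t)$ and of $\tfrac{(r+1)(t+1)}{r(t+1)+1}$---pin down both $\eta$ and $\psi$ and guard against arithmetic slips. Finally, the spherical model is obtained by setting $\nu=0$, since every expression above depends continuously on $\nu\ge0$.
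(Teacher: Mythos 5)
Your proposal is correct and follows essentially the same route as the paper's proof: it evaluates the moment formula of Theorem~\ref{theo:vol_Simplices_Moments}(b) at $k=tn/2$, applies the Gamma asymptotics~\eqref{GammaLimit1} to the three blocks (reusing~\eqref{eq:tech1} for the $j$-product), and collects exponents, with all your constants $a$, $b$ and the resulting exponents of $1+t$ and of $\frac{(r+1)(t+1)}{r(t+1)+1}$ matching the paper's computation exactly. The explicit cancellation checks and the analytic-continuation remark for complex $t$ are sensible additions but do not alter the argument.
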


\begin{proof}
	From the moment formula in Theorem \ref{theo:vol_Simplices_Moments}(b) we have
	\begin{align*}
	\E e^{tn\cL_{n,r}} = \prod_{j=1}^r\Bigg[{\Gamma\Big({n-r+j\over 2}+{tn\over 2}\Big)\over\Gamma\Big({n-r+j\over 2}\Big)}{\Gamma\Big({n+\nu\over 2}\Big)\over\Gamma\Big({n+\nu\over 2}+{tn\over 2}\Big)}\Bigg]{\Gamma\Big({n+\nu\over 2}\Big)\over\Gamma\Big({n+\nu\over 2}+{tn\over 2}\Big)}{\Gamma\Big({r(n+\nu-2)+(n+\nu)\over 2}+{(r+1)tn\over 2}\Big)\over\Gamma\Big({r(n+\nu-2)+(n+\nu)\over 2}+{rtn\over 2}\Big)}.
	\end{align*}
First of all, by \eqref{GammaLimit1},
	\begin{align*}
	{\Gamma\Big({n+\nu\over 2}\Big)\over\Gamma\Big({n+\nu\over 2}+{tn\over 2}\Big)} \sim (1+t)^{{1\over 2}-{\nu\over 2}-{(1+t)n\over 2}}\Big({n\over 2}\Big)^{-{tn\over 2}}\,e^{tn\over 2}.
	\end{align*}
It follows from \eqref{eq:tech1} that the first product in the moment formula asymptotically behaves like
	$$
	\prod_{j=1}^r\Bigg[{\Gamma\Big({n-r+j\over 2}+{tn\over 2}\Big)\over\Gamma\Big({n-r+j\over 2}\Big)}{\Gamma\Big({n+\nu\over 2}\Big)\over\Gamma\Big({n+\nu\over 2}+{tn\over 2}\Big)}\Bigg] \sim (1+t)^{-{r\nu\over 2}-{r(r-1)\over 4}}.
	$$
Again using \eqref{GammaLimit1}, we obtain
	\begin{align*}
	{\Gamma\Big({r(n+\nu-2)+(n+\nu)\over 2}+{(r+1)tn\over 2}\Big)\over\Gamma\Big({r(n+\nu-2)+(n+\nu)\over 2}+{rtn\over 2}\Big)}& \sim ((r+1)(t+1))^{{n(r+1)(t+1)\over 2}+{\nu(r+1)-2r-1\over 2}}\Big({n\over 2}\Big)^{{tn\over 2}}\,e^{-{tn\over 2}}\\
	&\qquad\qquad\times (r(t+1)+1)^{-{n(r(t+1)+1)\over 2}-{\nu(r+1)-2r-1\over 2}}\,.
	\end{align*}
	Thus, as $n\to\infty$, we get
	\begin{align*}
	\log \E e^{tn\cL_{n,r}} &= \bigg({1-\nu(r+1)\over 2}-{r(r-1)\over 4}-{(1+t)n\over 2}\bigg)\log(1+t) \\
	&\qquad +\bigg({n(r+1)(t+1)\over 2}+{\nu(r+1)-2r-1\over 2}\bigg)\log((r+1)(t+1))\\
	&\qquad -\bigg({n(r(t+1)+1)\over 2}+{\nu(r+1)-2r-1\over 2}\bigg)\log(r(t+1)+1) + o(1)
	\end{align*}
	and the result follows.
\end{proof}

\subsection{Mod-$\phi$ convergence for the ExpGamma distribution}
Many examples of mod-$\phi$ convergence are known in probability, number theory, statistical mechanics and random matrix theory. The most basic cases are probably the mod-Gaussian and mod-Poisson convergence, which can be found in \cite{jacod_etal,kowalski_nikeghbali,Feray1}, but there are also examples of mod-Cauchy~\cite{delbaen_etal,kowalski_najnudel_etal} and even mod-uniform \cite[\S 7.4]{Feray1} convergence. The aim of the present section is to add one more item to this list by proving a convergence modulo a tilted $1$-stable totally skewed distribution.

Let $X_n$ be a random variable having a Gamma distribution with parameters $(n,1)$, that is the probability density of $X_n$ is $\frac 1 {\Gamma(n)} x^{n-1} \eee^{-x}$, $x>0$. The distribution of $\log X_n$ is called the ExpGamma distribution. The probability density of $-\log X_n$ is given by
$$
\frac{1}{\Gamma(n)} \eee^{-\eee^{-x}}  \eee^{-x n}, \quad x\in\R,
$$
and is the limiting probability density of the $n$-th order upper order statistic in an i.i.d.\ sample of size $N\to\infty$ from the max-domain of attraction of the Gumbel distribution, or, equivalently, the density of the $n$-th upper order statistic in the Poisson point process with intensity $\eee^{-x}\dd x$, $x\in\R$; see~\cite[Theorem~2.2.2 on p.~33]{leadbetter_book}.
 It is easy to check that $\E \log X_n = \Gamma'(n)/\Gamma(n) =\psi(n)$ is the digamma function.
\begin{theorem}\label{theo:exp_gamma_mod_phi}
The sequence of random variables $n(\log X_n - \psi(n))$ converges in the mod-$\phi$ sense with $\eta(t) = (t+1)\log (t+1) - t$ and parameter $w_n=n$, namely
$$
\lim_{n\to\infty} \frac{\E \eee^{tn(\log X_n - \psi(n))}}{ \eee^{n ((t+1)\log (t+1) - t)}} = \frac {\eee^{t/2}}{ \sqrt{t+1}}
$$
uniformly as long as $t$ stays in any compact subset of  $\C \backslash (-\infty,-1)$.
\end{theorem}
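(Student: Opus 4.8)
The plan is to compute the moment generating function of $\log X_n$ in closed form and then extract the limit by sharp asymptotics of the Gamma and digamma functions. Since $X_n$ has density $\frac{1}{\Gamma(n)}x^{n-1}\eee^{-x}$, for every $s$ with $\Re s > -n$ we have $\E[X_n^s] = \frac{1}{\Gamma(n)}\int_0^\infty x^{n-1+s}\eee^{-x}\,\dd x = \frac{\Gamma(n+s)}{\Gamma(n)}$. Taking $s=tn$ gives
$$
\E[\eee^{tn\log X_n}] = \frac{\Gamma(n(1+t))}{\Gamma(n)},\qquad \Re t > -1,
$$
and both sides continue analytically to $t\in\C\setminus(-\infty,-1]$, the poles of $\Gamma(n(1+t))$ all lying in $(-\infty,-1]$. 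Hence the quantity whose limit must be identified is
$$
\frac{\E[\eee^{tn(\log X_n - \psi(n))}]}{\eee^{n\eta(t)}} = \exp\Big(\log\Gamma(n(1+t)) - \log\Gamma(n) - tn\,\psi(n) - n\,\eta(t)\Big).
$$

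First I would expand the exponent using the logarithmic form of Stirling's formula, $\log\Gamma(z) = (z-\tfrac12)\log z - z + \tfrac12\log(2\pi) + O(1/z)$ (the additive companion of \eqref{GammaLimit1}), valid uniformly as $|z|\to\infty$ with $|\arg z|<\pi-\eps$. Writing $\log(n(1+t)) = \log n + \log(1+t)$ with the principal branch (well defined since $1+t$ avoids $(-\infty,0]$), one finds
$$
\log\Gamma(n(1+t)) - \log\Gamma(n) = tn\log n + \Big(n(1+t)-\tfrac12\Big)\log(1+t) - tn + O(1/n),
$$
where the $\tfrac12\log(2\pi)$ terms cancel and the crucial $-\tfrac12\log(1+t)$ survives precisely because of the $z-\tfrac12$ shift in Stirling's exponent. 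Next I would insert the sharp digamma asymptotics $\psi(n) = \log n - \tfrac1{2n} + O(1/n^2)$ (a standard refinement of Lemma~\ref{lem:AsymptoticPolygamma}, see \cite{Abramovitz}), so that $tn\,\psi(n) = tn\log n - \tfrac t2 + O(1/n)$; it is this $-\tfrac1{2n}$ correction which, after multiplication by $tn$, produces the factor $\eee^{t/2}$. Subtracting $n\eta(t) = n(1+t)\log(1+t) - tn$ then cancels every $n$-dependent term and leaves
$$
\log\Gamma(n(1+t)) - \log\Gamma(n) - tn\,\psi(n) - n\,\eta(t) = -\tfrac12\log(1+t) + \tfrac t2 + O(1/n),
$$
whose exponential converges to $\eee^{t/2}/\sqrt{1+t}$, as claimed.

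The main obstacle, and the point demanding the most care, is the bookkeeping of the two order-one contributions: the half-integer shift in Stirling's exponent (yielding $(1+t)^{-1/2}$) and the $-\tfrac1{2n}$ term in $\psi(n)$ (yielding $\eee^{t/2}$); using only the crude leading forms $\log\Gamma(z)\sim z\log z$ and $\psi(z)\sim \log z$ would lose exactly these factors, so one must keep one more order than naively expected. The second issue is uniformity over compact subsets of $\C\setminus(-\infty,-1)$ rather than over a mere vertical strip. On such a compact set $K$ the argument $\arg(1+t)$ stays bounded away from $\pm\pi$ while $|1+t|$ is bounded away from $0$ and $\infty$, so $n(1+t)\to\infty$ uniformly inside a sector $|\arg z|\le\pi-\eps$; consequently the remainders in the Stirling and digamma expansions are uniform on $K$, which upgrades the pointwise limit to the required locally uniform convergence. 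Alternatively, one could note that the analytic ratios are locally uniformly bounded and invoke Vitali's theorem to pass from pointwise to locally uniform convergence.
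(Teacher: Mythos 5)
Your proposal is correct and follows essentially the same route as the paper: the exact identity $\E[X_n^{tn}]=\Gamma(n(1+t))/\Gamma(n)$, Stirling's formula for the Gamma ratio, and the refined digamma expansion $\psi(n)=\log n-\tfrac1{2n}+o(\tfrac1n)$, which together produce the factors $(1+t)^{-1/2}$ and $\eee^{t/2}$ exactly as in the paper's proof. Your use of the logarithmic form of Stirling instead of the multiplicative form $\Gamma(z)\sim\sqrt{2\pi/z}\,(z/\eee)^z$ is a cosmetic difference, and your explicit remarks on analytic continuation past $\Re t=-1$ and on uniformity via the sector condition $|\arg(1+t)|\le\pi-\eps$ merely spell out details the paper leaves implicit.
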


\begin{proof}
By the properties of the Gamma distribution, we have
$$
\E \eee^{tn(\log X_n - \psi(n))} = \eee^{-tn \psi(n)} \E X_n^{tn} = \eee^{-tn \psi(n)} \frac{\Gamma(tn+n)}{\Gamma(n)}.
$$
The Stirling formula states that $\Gamma(z) \sim \sqrt{2\pi/z}\, (z/\eee)^z$ uniformly as $|z|\to\infty$ in such a way that $|\arg z| < \pi -\eps$. Using the Stirling formula together with  the  asymptotics $\psi(n) = \log n - \frac 1{2n} + o(\frac 1n)$, we obtain
$$
\eee^{-tn \psi(n)} \frac{\Gamma(tn+n)}{\Gamma(n)}
\sim
\eee^{-tn(\log n - \frac 1 {2n})}
\frac{\sqrt{\frac {2\pi}{tn+n}}\left(\frac{tn+n}{\eee}\right)^n}{\sqrt{\frac {2\pi}{n}}\left(\frac{n}{\eee}\right)^n}
= \frac{\eee^{t/2}}{\sqrt{t+1}} \eee^{n ((t+1)\log (t+1) - t)},
$$
which proves the claim.
\end{proof}

\begin{remark}
Consider an $\alpha$-stable random variable $Z_1\sim S_1 (\pi/2, -1,0)$ with $\alpha=1$, skewness $\beta= -1$, and scale $\sigma= \pi/2$, where we adopt the parametrization used in the book of~\cite{samorodnitsky_taqqu_book}. It is known~\cite[Proposition 1.2.12]{samorodnitsky_taqqu_book} that the cumulant generating function of this random variable is given by
$$
\log \E \eee^{t Z_1} =   t \log t, \quad  \Re t \geq 0.
$$
Note that $\E \eee^Z_1 = 1$ and consider an exponential tilt of $Z_1$, denoted  $Z_2$,  whose probability  density is
$$
\P[Z_2 \in \dd x] = \eee^x \P[Z_1\in \dd x], \quad x\in\R.
$$
Finally, observe that $\E Z_2 = \E [\eee^{Z_1} Z_1] = (t^t)'|_{t=1} = 1$ and consider the centered version $Z:= Z_2-1$. The cumulant generating function of $Z$ is given by
$$
\log \E \eee^{t Z} =   (t+1) \log (t+1) - t, \quad \Re t\geq -1.
$$
As an exponential tilt of an infinitely divisible distribution, $Z$ is itself infinitely divisible. Thus, in Theorem~\ref{theo:exp_gamma_mod_phi} and Proposition~\ref{prop:mod_phi_constant_r} we have a mod-$\phi$ convergence modulo a tilted totally skewed $1$-stable distribution.
\end{remark}

\subsection{Mod-$\phi$ convergence in the full dimensional case}
In this section we consider the full-dimensional case $r=n$, i.e., we are interested in the random variable $\cL_{n,n}$.
\begin{proposition}\label{prop:mod_phi_full_dim}
Consider the Gaussian model and let $m_n = \frac 12 (n \log n - n + \frac 12 \log n  + \log (2^{3/2}\pi))$. Then, $\cL_{n,n} - m_n$ converges in the mod-Gaussian sense (meaning that $\eta(t) = \frac 12 t^2$)  and parameters $w_n = \frac 12 \log \frac n2$, namely
$$
\lim_{n\to\infty} \frac{\E \eee^{t(\cL_{n,n} - m_n)}}{\eee^{\frac 14 t^2\log \frac n2}} = \frac{G\left(\frac{1}{2}\right)}{G\left(\frac{1}{2} + \frac t2\right) G\left(1 + \frac t2\right)}.
$$
The convergence is uniform as long as $t$ stays in any compact subset of $\C\backslash\{-1,-2,\ldots\}$.
\end{proposition}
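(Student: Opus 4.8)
The plan is to proceed exactly as in the proofs of Propositions~\ref{prop:mod_phi_constant_r} and~\ref{prop:BetaMomentGeneratingFunction}, starting from the exact moment formula rather than from a limit. Setting $r=n$ and $k=t/2$ in Theorem~\ref{theo:vol_Simplices_Moments}(a) and using $n-r+j=j$, the moment generating function of $\cL_{n,n}$ becomes
$$
\E \eee^{t\cL_{n,n}} = (n+1)^{t/2}\,2^{nt/2}\prod_{j=1}^n \frac{\Gamma\big(\frac{j+t}{2}\big)}{\Gamma\big(\frac{j}{2}\big)}.
$$
The decisive point is that, unlike the fixed-$r$ case, the number of factors now grows with $n$, so I would \emph{not} apply the Stirling formula~\eqref{GammaLimit1} factor by factor. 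Instead I would split the product according to the parity of $j$. For $n=2N$ the even indices $j=2\ell$ contribute $\prod_{\ell=1}^N \Gamma(\ell+\tfrac t2)/\Gamma(\ell)$ and the odd indices $j=2\ell-1$ contribute $\prod_{\ell=1}^N \Gamma(\ell-\tfrac12+\tfrac t2)/\Gamma(\ell-\tfrac12)$, and each of the four resulting products is a product of consecutive $\Gamma$-values of the type to which the exact identity~\eqref{eq:prod_gamma_functions} applies.

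Carrying this out turns the product into the exact meromorphic identity
$$
\prod_{j=1}^n \frac{\Gamma\big(\frac{j+t}{2}\big)}{\Gamma\big(\frac{j}{2}\big)} = \frac{G\big(\frac12\big)}{G\big(1+\frac t2\big)G\big(\frac12+\frac t2\big)}\cdot\frac{G\big(N+1+\frac t2\big)\,G\big(N+\frac12+\frac t2\big)}{G(N+1)\,G\big(N+\frac12\big)}.
$$
The first factor is already the claimed limit function, so everything reduces to showing that the normalization $\eee^{t m_n}\eee^{\frac14 t^2\log\frac n2}$ cancels the second, $N$-dependent factor up to $o(1)$. I would take logarithms and apply Lemma~\ref{lem:barnes_G_diff_asympt} with $z\in\{N,\,N-\tfrac12\}$ and $a=\tfrac t2$; since $t$ ranges over a compact set, $a$ is bounded and $a/z\to0$, so the lemma gives an expansion $\tfrac t2\Lambda_N+\tfrac{t^2}{4}\log N+O(1/N)$, where after expanding $(N-\tfrac12)\log(N-\tfrac12)$ and setting $N=n/2$ one finds $\Lambda_N = n\log n - n\log 2 - \tfrac12\log n + \tfrac12\log 2 - n + \log(2\pi) + O(1/n)$.

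The remaining work is bookkeeping. Combining $\tfrac t2\Lambda_N$ with the prefactor contributions $\tfrac t2\log(n+1)$ and $\tfrac{nt}{2}\log2$ and comparing against $t\,m_n$, every term linear in $t$ cancels identically, which is precisely how $m_n$ was engineered; simultaneously $\tfrac{t^2}{4}\log N=\tfrac{t^2}{4}\log\frac n2+O(1/n)$ is removed by the parameter $w_n=\tfrac12\log\frac n2$ entering through $\eta(t)=\tfrac12 t^2$. What survives is exactly $\log\frac{G(1/2)}{G(1+t/2)G(1/2+t/2)}$ plus an $O(1/n)$ error, and exponentiating yields the assertion. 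Two matters need care. First, the parity of $n$: for odd $n=2N+1$ an extra odd factor replaces $G(N+\tfrac12+\tfrac t2)/G(N+\tfrac12)$ by $G(N+\tfrac32+\tfrac t2)/G(N+\tfrac32)$, and the same expansion, re-expressed in terms of $n$, produces the identical limit, so both parities must be recorded but give the same statement. Second, uniformity and the exceptional set: the error term in Lemma~\ref{lem:barnes_G_diff_asympt} is $O((|a|^3+1)/z)$ and hence uniform for $t$ in compacts, while the limit function is analytic exactly on $\C\backslash\{-1,-2,\ldots\}$, these being the zeros of $t\mapsto G(1+\tfrac t2)G(\tfrac12+\tfrac t2)$. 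I expect the main obstacle to be not conceptual but the disciplined matching of all lower-order terms against $m_n$; the genuinely new idea is the parity-split reduction to~\eqref{eq:prod_gamma_functions}, after which the whole argument is driven by the single asymptotic Lemma~\ref{lem:barnes_G_diff_asympt}.
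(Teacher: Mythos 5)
Your proof is correct and is essentially the paper's own argument: the paper likewise converts the exact moment formula into Barnes $G$-functions via \eqref{eq:prod_gamma_functions} (its formula \eqref{eq:moment_gen_Y_n_n} is precisely your parity-split product, written with arguments $\frac{n+1}{2}$ and $\frac{n+2}{2}$ so that one expression covers both parities) and then applies Lemma~\ref{lem:barnes_G_diff_asympt} twice with $a=\frac t2$. The only differences are bookkeeping: you expand $\left(N-\frac12\right)\log\left(N-\frac12\right)$ by hand and treat even and odd $n$ separately, while the paper absorbs the lower-order terms via the identity $((n+b)\log(n+b)-(n+b))-(n\log n-n)=b\log n+o(1)$.
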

\begin{proof}
In view of Theorem~\ref{theo:vol_Simplices_Moments}(a) and~\eqref{eq:prod_gamma_functions}, we can express the moment generating function of $\cL_{n,n}$ in terms of the Barnes $G$-function as
\begin{equation}\label{eq:moment_gen_Y_n_n}
\E \eee^{t\cL_{n,n}}
= \E [(n! \cV_{n,n})^t]
= (n+1)^{\frac{t}{2}}\, 2^{\frac{tn}{2}}\,
\frac{G\left(\frac{1}{2}\right)}{G\left(\frac{n+1}{2}\right)}
\cdot
\frac{G(1)}{G\left(\frac{n+2}{2}\right)}
\cdot
\frac{G\left(\frac{n+1}{2} + \frac t2\right)}{G\left(\frac{1}{2} + \frac t2\right)}
\cdot
\frac{G\left(\frac{n+2}{2} + \frac t2\right)}{G\left(1 + \frac t2\right)},
\end{equation}
where $G(1) = 1$. For the function
\begin{equation}\label{eq:PsiDef}
\psi(t) := \frac{G\left(\frac{1}{2}\right)}{G\left(\frac{1}{2} + \frac t2\right) G\left(1 + \frac t2\right)}
\end{equation}
we have
\begin{align*}
\log \E \eee^{t\cL_{n,n}}
&= \frac{t}{2}\log(n+1) + \frac{tn}{2} \log 2 +
 \log \psi(t) + \log G\left(\frac{n-1}{2} +\frac t2 +1\right) - \log G\left(\frac{n-1}{2} +1\right)\\
&\qquad+ \log G\left(\frac{n}{2} +\frac t2 +1\right) - \log G\left(\frac{n}{2} +1\right).
\end{align*}
Applying Lemma~\ref{lem:barnes_G_diff_asympt} two times
and using the formula
\begin{equation*}
((n+b) \log (n+b) - (n+b)) - (n\log n - n) = b \log n + o(1),
\end{equation*}
where $b$ is any constant, we obtain
\begin{equation}\label{eq:GaussFallModPhiFull}
\log \E \eee^{t\cL_{n,n}}
=
\log \psi(t)
+
\frac t2 \left(n \log n - n + \frac 12 \log n  + \log (2^{3/2}\pi)\right) +\frac 14 {t^2} \log \frac n2 +o(1).
\end{equation}
This completes the argument.
\end{proof}

\begin{remark}\label{rm:DalBorgoEtAl}
We notice that in the full dimensional, Gaussian case $r=n$ our random variables are equivalent to those considered in \cite{dalBorgo_etal} and one can follow our result also from their Theorem 5.1. Nevertheless, we decided to include our independent and much shorter proof.\\
Their paper deals with the determinant of certain random matrix models and has a completely different focus. On the other hand, let us emphasize that even in this special case the distributions appearing in \cite{dalBorgo_etal} are in fact different from (but very close to) those we obtain.
\end{remark}

\begin{proposition}\label{prop:mod_phi_full_dimBeta}
Consider the Beta model with parameter $\nu>0$ or the spherical model (in which case $\nu=0$) and let $\widetilde{m}_n =  {1\over 2}({1\over 2}\log n-n+1-\nu+\log(2^{3/2}\pi))$. Then, $\cL_{n,n} - \widetilde{m}_n$ converges in the mod-Gaussian sense (meaning that $\eta(t) = \frac 12 t^2$)  and parameters $w_n = \frac 12 \log \frac n2-\frac 12$, namely
$$
\lim_{n\to\infty} \frac{\E \eee^{t(\cL_{n,n} - \widetilde{m}_n)}}{\eee^{\frac 14 t^2(\log \frac n2-1)}} = \frac{G\left(\frac{1}{2}\right)}{G\left(\frac{1}{2} + \frac t2\right) G\left(1 + \frac t2\right)}.
$$
The convergence is uniform as long as $t$ stays in any compact subset of $\C\backslash\{-1,-2,\ldots\}$.
\end{proposition}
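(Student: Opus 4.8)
The plan is to mirror the argument for the Gaussian full-dimensional case (Proposition~\ref{prop:mod_phi_full_dim}), carefully tracking the additional $\Gamma$-factors that distinguish the Beta moment formula from the Gaussian one. I would start from Theorem~\ref{theo:vol_Simplices_Moments}(b) with $r=n$ and $k=t/2$, so that $n-r+j=j$, and factor the moment generating function into three pieces
\[
\E \eee^{t\cL_{n,n}} = \underbrace{\prod_{j=1}^n \frac{\Gamma\big(\tfrac j2 + \tfrac t2\big)}{\Gamma\big(\tfrac j2\big)}}_{=:P_1} \cdot \underbrace{\left[\frac{\Gamma\big(\tfrac{n+\nu}{2}\big)}{\Gamma\big(\tfrac{n+\nu}{2}+\tfrac t2\big)}\right]^{n+1}}_{=:P_2} \cdot \underbrace{\frac{\Gamma\big(\tfrac N2 + (n+1)\tfrac t2\big)}{\Gamma\big(\tfrac N2 + n\tfrac t2\big)}}_{=:P_3},
\]
where $N = n(n+\nu-2)+(n+\nu)$, and then treat each factor separately.

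The first factor $P_1$ is exactly the product appearing in the Gaussian case, stripped of the prefactor $(n+1)^{t/2}2^{nt/2}$. Splitting the product over odd and even $j$ and invoking the Barnes identity~\eqref{eq:prod_gamma_functions} rewrites $P_1$ as four $G$-function ratios; two applications of Lemma~\ref{lem:barnes_G_diff_asympt} then produce the limit function $\psi(t)=G(\tfrac12)/\big(G(\tfrac12+\tfrac t2)G(1+\tfrac t2)\big)$ together with a linear drift and the mod-Gaussian term $\tfrac14 t^2\log\tfrac n2$, exactly as in~\eqref{eq:GaussFallModPhiFull} after subtracting the missing prefactor's contribution $-\tfrac t2\log(n+1)-\tfrac{nt}{2}\log 2$. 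For $P_2$ I would use the second-order Stirling expansion $\log\Gamma(x+a)-\log\Gamma(x)=a\log x+\tfrac{a(a-1)}{2x}+O(1/x^2)$ with $x=\tfrac{n+\nu}2$ and $a=\tfrac t2$; here the $(n+1)$-fold power turns the $1/x$-correction into a finite contribution, yielding $\log P_2 = -\tfrac t2(n+1)\log\tfrac n2 - \tfrac{t\nu}2 - \tfrac{t^2}4 + \tfrac t2 + o(1)$. For $P_3$, since its two arguments differ only by the fixed amount $t/2$ while both are of order $n^2$, a single application of~\eqref{GammaLimit1} (equivalently first-order Stirling) together with $\log\tfrac N2 = 2\log n - \log 2 + o(1)$ gives $\log P_3 = t\log n - \tfrac t2\log 2 + o(1)$.

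Finally I would add the three expansions and verify that they collapse to $t\widetilde m_n + \big(\tfrac12\log\tfrac n2-\tfrac12\big)\tfrac12 t^2 + \log\psi(t) + o(1)$. The decisive cancellations are: the $\tfrac14 t^2\log\tfrac n2$ from $P_1$ combines with the $-\tfrac14 t^2$ from $P_2$ to form $\tfrac14 t^2(\log\tfrac n2-1)=w_n\eta(t)$; the $\pm\tfrac{tn}2\log n$ and $\pm\tfrac{tn}2\log 2$ terms cancel between $P_1$ and $P_2$; and the surviving terms proportional to $t\log n$, $t\log 2$, $t\log\pi$, $t\nu$, $tn$ and $t$ assemble precisely into $t\widetilde m_n$. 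The spherical model then follows by setting $\nu=0$, and the uniformity on compact subsets of $\C\setminus\{-1,-2,\ldots\}$ is inherited from the uniform Stirling-type asymptotics of $\Gamma$ and $G$, the excluded points being exactly the poles of $\psi$.

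The main obstacle is bookkeeping rather than any single hard estimate: the $-\tfrac14 t^2$ that shifts $w_n$ away from its Gaussian value $\tfrac12\log\tfrac n2$ arises solely from the $\tfrac{a^2}{2x}$ term in the Stirling expansion of $P_2$ amplified by the exponent $n+1$, so this second-order correction must be retained throughout, and the $\nu$-dependence must be carried carefully through $\log\tfrac{n+\nu}2$ and $\log\tfrac N2$ in order to reproduce the exact constant $\widetilde m_n = \tfrac12(\tfrac12\log n - n + 1 - \nu + \log(2^{3/2}\pi))$.
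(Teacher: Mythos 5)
Your proposal is correct and takes essentially the same route as the paper: the paper likewise writes $\log\E\, \eee^{t\cL_{n,n}}$ as the Gaussian full-dimensional term (your $P_1$, handled via the Barnes $G$-function exactly as in Proposition~\ref{prop:mod_phi_full_dim}) plus the two Beta corrections, and its second-order Stirling expansions agree term by term with your $\log P_2 = -\frac t2(n+1)\log\frac n2 - \frac{t\nu}2 - \frac{t^2}4 + \frac t2 + o(1)$ and $\log P_3 = t\log n - \frac t2\log 2 + o(1)$. In particular, your observation that the shift of $w_n$ by $-\frac12$ comes solely from the $\frac{a(a-1)}{2x}$ Stirling correction amplified by the exponent $n+1$ is precisely the mechanism in the paper's proof.
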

\begin{proof}
For the purposes of this proof let $\cL_{n,n}^{\text{G}}$ denote the Gaussian analogue of $\cL_{n,n}$. In view of the connection between the Gaussian and the Beta model, see Theorem~\ref{theo:vol_Simplices_Moments}(a),(b),  the moment generating function of $\cL_{n,n}$  is given by
\begin{multline*}
\log \E \eee^{t\cL_{n,n}}
=
\log \E \eee^{t\cL_{n,n}^{\text{G}}} - \frac t2 \log (n+1) - \frac {tn}{2} \log 2
\\
+ (n+1)\log\left({\Gamma\Big({n+\nu\over 2}\Big)\over\Gamma\Big({n\over 2}+{\nu + t\over 2}\Big)}\right)
+ \log \left({\Gamma\Big({n(n+\nu-1)+nt+ t + \nu \over 2}\Big)\over\Gamma\Big({n(n+\nu-1)+nt +\nu\over 2}\Big)}\right).
\end{multline*}
Using a second-order Stirling approximation for the logarithms of the Gamma functions, we obtain
\begin{align*}
(n+1)\log\left({\Gamma\Big({n+\nu\over 2}\Big)\over\Gamma\Big({n\over 2}+{\nu + t\over 2}\Big)}\right) = {(n+1)t\over 2}\log{2\over n}-{t\over 4}(t-2+2\nu) + o(1)
\end{align*}
and similarly
\begin{align*}
\log \left({\Gamma\Big({n(n+\nu-1)+nt+ t + \nu \over 2}\Big)\over\Gamma\Big({n(n+\nu-1)+nt +\nu\over 2}\Big)}\right)
 = t\log n-{t\over 2}\log 2+o(1).
\end{align*}
Denoting by $\psi(t)$ the function defined at \eqref{eq:PsiDef} and using \eqref{eq:GaussFallModPhiFull} we conclude that, after simplification of the resulting terms,
\begin{align*}
\log \E e^{t\cL_{n,n}} = \log\psi(t)+t\widetilde{m}_n + {t^2\over 4}\Big(\log{n\over 2}-1\Big) +o(1)
\end{align*}
from which the result follows.
\end{proof}

\subsection{Case of fixed codimension}

Consider the case in which the codimension of the simplex $n-r$ stays fixed, while $n\to\infty$. Of course, if $n-r=0$, we recover the full-dimensional case.
\begin{proposition}\label{prop:mod_phi_full_dim_almost}
Consider the Gaussian model and let $m_n$ be the same as in Proposition~\ref{prop:mod_phi_full_dim}. Let $d\in \N$ be fixed and take $r=n-d$, where $n\to\infty$. Then, $\cL_{n,r} - m_n$ converges in the mod-Gaussian sense (meaning that $\eta(t) = \frac 12 t^2$) with parameters $w_n = \frac 12 \log \frac n2$, namely
$$
\lim_{n\to\infty} \frac{\E \eee^{t(\cL_{n,r} - m_n)}}{\eee^{\frac 14 t^2\log \frac n2}}
=
\frac {G\left(\frac{d+1}{2}\right) \cdot G\left(\frac{d+2}{2}\right)}
{2^{\frac{td}{2}}\,
G\left(\frac{d+1}{2} + \frac t2\right)
\cdot
G\left(\frac{d+2}{2} + \frac t2\right)
}
.
$$
The convergence is uniform as long as $t$ stays in any compact subset of $\C\backslash\{-d-1,-d-2,\ldots\}$.
\end{proposition}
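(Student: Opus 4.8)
The plan is to reduce everything to the already established full--dimensional case, Proposition~\ref{prop:mod_phi_full_dim}, by comparing the moment generating functions of $\cL_{n,r}$ and $\cL_{n,n}$. Writing $z=t$ in Theorem~\ref{theo:vol_Simplices_Moments}(a) and using $n-r=d$, one has
$$
\E \eee^{t\cL_{n,r}} = (r+1)^{t/2}\,2^{tr/2}\prod_{\ell=d+1}^{n}\frac{\Gamma\!\left(\frac\ell2+\frac t2\right)}{\Gamma\!\left(\frac\ell2\right)},
\qquad
\E \eee^{t\cL_{n,n}} = (n+1)^{t/2}\,2^{tn/2}\prod_{\ell=1}^{n}\frac{\Gamma\!\left(\frac\ell2+\frac t2\right)}{\Gamma\!\left(\frac\ell2\right)},
$$
so that, since $r=n-d$,
$$
\E \eee^{t\cL_{n,r}} = \E \eee^{t\cL_{n,n}}\cdot\left(\frac{n-d+1}{n+1}\right)^{t/2}2^{-td/2}\prod_{\ell=1}^{d}\frac{\Gamma\!\left(\frac\ell2\right)}{\Gamma\!\left(\frac\ell2+\frac t2\right)}.
$$
Here $\left(\frac{n-d+1}{n+1}\right)^{t/2}\to 1$ locally uniformly, the factor $2^{-td/2}$ and the finite product do not depend on $n$, and $m_n$ together with $w_n=\tfrac12\log\tfrac n2$ are taken exactly as in Proposition~\ref{prop:mod_phi_full_dim}. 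Dividing by $\eee^{tm_n}\eee^{\frac14 t^2\log\frac n2}$ and invoking Proposition~\ref{prop:mod_phi_full_dim} therefore gives, for $t\in\C\setminus\{-1,-2,\dots\}$,
$$
\frac{\E \eee^{t(\cL_{n,r}-m_n)}}{\eee^{\frac14 t^2\log\frac n2}}\ \longrightarrow\ 2^{-td/2}\,\psi(t)\prod_{\ell=1}^{d}\frac{\Gamma\!\left(\frac\ell2\right)}{\Gamma\!\left(\frac\ell2+\frac t2\right)},
$$
where $\psi$ is the function in \eqref{eq:PsiDef}.

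It then remains to identify the right--hand side with the claimed limit, and the key is the closed form
$$
\prod_{\ell=1}^{d}\Gamma\!\left(\frac{\ell+t}2\right)=\frac{G\!\left(\frac{d+1}2+\frac t2\right)\,G\!\left(\frac{d+2}2+\frac t2\right)}{G\!\left(\frac12+\frac t2\right)\,G\!\left(1+\frac t2\right)},
$$
which I would prove by induction on $d$ using only the functional equation $G(z+1)=\Gamma(z)G(z)$: the inductive step multiplies by $\Gamma\!\left(\frac{d+1+t}2\right)$ and turns $\Gamma\!\left(\frac{d+1+t}2\right)G\!\left(\frac{d+1+t}2\right)$ into $G\!\left(\frac{d+3+t}2\right)$. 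Setting $t=0$ (and using $G(1)=1$) evaluates $\prod_{\ell=1}^d\Gamma(\ell/2)$, and substituting both expressions together with \eqref{eq:PsiDef} into the previous limit, all occurrences of $G(1/2)$, $G\!\left(\frac12+\frac t2\right)$ and $G\!\left(1+\frac t2\right)$ cancel, leaving precisely
$$
\frac{G\!\left(\frac{d+1}2\right)G\!\left(\frac{d+2}2\right)}{2^{td/2}\,G\!\left(\frac{d+1}2+\frac t2\right)G\!\left(\frac{d+2}2+\frac t2\right)},
$$
the asserted limit function.

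The one genuinely delicate point concerns the domain and the uniformity. The reduction above is valid on $\C\setminus\{-1,-2,\dots\}$, the natural domain of $\psi$, whereas the statement asserts uniform convergence on compact subsets of the larger set $\C\setminus\{-d-1,-d-2,\dots\}$. The mechanism is that the zeros of $\prod_{\ell=1}^d 1/\Gamma(\ell/2+t/2)$ at $t=-1,\dots,-d$ cancel the corresponding poles of $\psi$, so the limit is in fact analytic there; moreover, directly from the first display each normalized function $f_n(t):=\E \eee^{t(\cL_{n,r}-m_n)}/\eee^{\frac14 t^2\log\frac n2}$ is itself analytic on all of $\C\setminus\{-d-1,-d-2,\dots\}$, since the poles of $\Gamma(\ell/2+t/2)$ with $\ell\ge d+1$ sit at exactly the excluded integers. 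Having locally uniform convergence on $\C\setminus\{-1,-2,\dots\}$, I would extend it across the finitely many removable points $-1,\dots,-d$ by a Cauchy--integral (Vitali) argument: around each such point choose a small circle on which convergence is already uniform and pass to the interior via Cauchy's formula.

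The hard part will therefore not be the asymptotics, which follow immediately from the full--dimensional result and the elementary Barnes $G$ identity, but rather this bookkeeping of poles and the upgrade of the convergence to the full stated region. If one prefers to avoid the Vitali step altogether, an equally viable route is to repeat the computation of Proposition~\ref{prop:mod_phi_full_dim} verbatim for the product $\prod_{\ell=d+1}^{n}$, splitting it according to the parity of $\ell$ and applying Lemma~\ref{lem:barnes_G_diff_asympt} at the two moving endpoints of order $n/2$ while keeping the two fixed endpoints of order $d/2$ exact; this produces the Barnes $G$ form directly, with the correct analyticity on $\C\setminus\{-d-1,-d-2,\dots\}$ built in from the start.
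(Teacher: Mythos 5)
Your proposal is correct and essentially reproduces the paper's proof: your MGF quotient identity is exactly the moment-generating-function form of the paper's distributional identity \eqref{eq:DistributionalIdentityCL}, and your Barnes-$G$ product identity (proved by induction from $G(z+1)=\Gamma(z)G(z)$) is precisely \eqref{eq:prod_gamma_functions}, which the paper invokes through the explicit formula \eqref{eq:moment_gen_Y_n_n} for $\E\, \eee^{t\cL_{d,d}}$. Your only addition is the careful Vitali/Cauchy-integral upgrade of locally uniform convergence across the removable points $t=-1,\dots,-d$, a point the paper asserts without comment -- a worthwhile refinement, but not a different route.
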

\begin{proof}
First, we observe that Theorem \ref{theo:vol_distr_affine} implies the distributional representation
\begin{align}\label{eq:DistributionalIdentityCL}
\cL_{n,n}-{1\over 2}\log(n+1) \overset{d}{=} \Big(\cL_{n-r,n-r}-{1\over 2}\log(n-r+1)\Big)+\Big(\cL_{n,r}'-{1\over 2}\log(r+1)\Big),
\end{align}
where $\cL_{n,r}'$ is a copy of $\cL_{n,r}$ independent of $\cL_{n-r,n-r}$. Since $n-r=d$, this implies that
\begin{align*}
\E e^{t(\cL_{n,r}-m_n)} = \frac{\E \eee^{t(\cL_{n,n} - m_n)}}{\E \eee^{t \cL_{d,d}}}\,e^{{t\over 2}\log(d+1)}\,e^{{t\over 2}\log\big({n-d+1\over n+1}\big)}.
\end{align*}
Applying Proposition~\ref{prop:mod_phi_full_dim} to the numerator and~\eqref{eq:moment_gen_Y_n_n} to the denominator, we conclude that
$$
\E \eee^{t(\cL_{n,r} - m_n)}
\sim
\frac {\eee^{\frac 14 t^2\log \frac n2} \frac{G\left(\frac{1}{2}\right)}{G\left(\frac{1}{2} + \frac t2\right) G\left(1 + \frac t2\right)}}
{(d+1)^{\frac{t}{2}}\, 2^{\frac{td}{2}}\,   \frac{G\left(\frac{1}{2}\right)}{G\left(\frac{d+1}{2}\right)}
\cdot
\frac{G(1)}{G\left(\frac{d+2}{2}\right)}
\cdot
\frac{G\left(\frac{d+1}{2} + \frac t2\right)}{G\left(\frac{1}{2} + \frac t2\right)}
\cdot
\frac{G\left(\frac{d+2}{2} + \frac t2\right)}{G\left(1 + \frac t2\right)}
}\cdot (d+1)^{t\over 2}\,,
$$
which implies the claim.
\end{proof}

\begin{proposition}\label{propositionbeta}
Consider the Beta model with parameter $\nu>0$ or the spherical model (in which case $\nu=0$) and let $\tilde m_n$ be the same as in Proposition~\ref{prop:mod_phi_full_dimBeta}.
Let $d\in \N$ be fixed and take $r=n-d$, where $n\to\infty$. Then, $\cL_{n,r} - \tilde{m}_n - \frac{d-1}{2} \log \frac n2$ converges in the mod-Gaussian sense (meaning that $\eta(t) = \frac 12 t^2$)  and parameters $w_n = \frac 12 \log \frac n2-\frac 12$, namely
$$
\lim_{n\to\infty} \frac{\E \eee^{t(\cL_{n,r} - \widetilde{m}_n - \frac{d-1}{2} \log \frac n2)}}{\eee^{\frac 14 t^2(\log \frac n2-1)}} =
\frac {G\left(\frac{d+1}{2}\right) \cdot G\left(\frac{d+2}{2}\right)}
{2^{\frac{td}{2}}\,
G\left(\frac{d+1}{2} + \frac t2\right)
\cdot
G\left(\frac{d+2}{2} + \frac t2\right)
}.
$$
The convergence is uniform as long as $t$ stays in any compact subset of $\C\backslash\{-d-1,-d-2,\ldots\}$.
\end{proposition}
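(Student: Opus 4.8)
The plan is to rerun the computation behind Proposition~\ref{prop:mod_phi_full_dimBeta}, but now keeping the codimension $d=n-r$ fixed, so as to reduce everything to the Gaussian statement of Proposition~\ref{prop:mod_phi_full_dim_almost}. Comparing the Beta and Gaussian moment formulas of Theorem~\ref{theo:vol_Simplices_Moments}(a),(b) at $2k=t$ and factoring out the shared product $\prod_{j=1}^{r}\Gamma(\tfrac{n-r+j}{2}+\tfrac t2)/\Gamma(\tfrac{n-r+j}{2})$, and writing $N:=\tfrac{r(n+\nu-2)+(n+\nu)}{2}$, one obtains
\[
\log\E\eee^{t\cL_{n,r}}=\log\E\eee^{t\cL_{n,r}^{\text{G}}}-\tfrac t2\log(r+1)-\tfrac{rt}{2}\log2+(r+1)\log\frac{\Gamma(\tfrac{n+\nu}{2})}{\Gamma(\tfrac{n+\nu}{2}+\tfrac t2)}+\log\frac{\Gamma(N+\tfrac{(r+1)t}{2})}{\Gamma(N+\tfrac{rt}{2})},
\]
where $\cL_{n,r}^{\text{G}}$ is the Gaussian log-volume. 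Since $r=n-d$ with $d$ fixed, Proposition~\ref{prop:mod_phi_full_dim_almost} already supplies $\log\E\eee^{t\cL_{n,r}^{\text{G}}}=t\,m_n+\tfrac14 t^2\log\tfrac n2+\log\psi(t)+o(1)$ with $\psi(t)=\frac{G(\frac{d+1}{2})G(\frac{d+2}{2})}{2^{td/2}G(\frac{d+1}{2}+\frac t2)G(\frac{d+2}{2}+\frac t2)}$, so it remains only to expand the four correction terms.

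The decisive point is that all four corrections are \emph{polynomial in $t$}: no Barnes $G$ survives in them, so they cannot touch the limit function and can only reshape the centering and the variance -- this is precisely why the limit in the statement is the \emph{same} $\psi(t)$ as in the Gaussian case. For the last factor I would use $N\sim n^2/2$ and the first-order Stirling difference to get
\[
\log\frac{\Gamma(N+\tfrac{(r+1)t}{2})}{\Gamma(N+\tfrac{rt}{2})}=\tfrac t2\log N+O(1/n)=t\log n-\tfrac t2\log2+o(1),
\]
which is a pure drift (linear in $t$) contribution.

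The delicate factor is $(r+1)\log\big(\Gamma(\tfrac{n+\nu}{2})/\Gamma(\tfrac{n+\nu}{2}+\tfrac t2)\big)$, where the prefactor $r+1\sim n$ promotes the \emph{second}-order Stirling term to an $O(1)$ contribution. Applying
\[
\log\Gamma(z+a)-\log\Gamma(z)=a\log z+\frac{a(a-1)}{2z}+O(1/z^2),\qquad z=\tfrac{n+\nu}{2},\ a=\tfrac t2,
\]
the bracket equals $-\tfrac t2\log\tfrac n2-\tfrac{t\nu}{2n}-\tfrac{t(t-2)}{4n}+O(1/n^2)$; multiplying by $r+1$ produces the leading drift $-\tfrac{(r+1)t}{2}\log\tfrac n2$ together with the two $O(1)$ pieces $-\tfrac{t\nu}{2}-\tfrac{t(t-2)}{4}$. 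The $-\tfrac{t^2}{4}$ hidden in the last of these is exactly the $-1$ upgrading the Gaussian variance $\tfrac12\log\tfrac n2$ to $\tfrac12(\log\tfrac n2-1)$, while all the $t$-linear contributions -- these, the drift of the previous factor, and $-\tfrac t2\log(r+1)-\tfrac{rt}{2}\log2$ -- combine with $t\,m_n$ into the codimension-dependent centering of the statement (the $n\log n$-terms cancelling and the $n$-term reducing to $-n/2$, consistent with the definition of $\tilde m_n$ and $\psi(0)=1$).

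Reassembling the three expansions yields $\log\E\eee^{t(\cL_{n,r}-\tilde m_n-\frac{d-1}{2}\log\frac n2)}-\tfrac14 t^2(\log\tfrac n2-1)\to\log\psi(t)$, locally uniformly on compacta of $\C\setminus\{-d-1,-d-2,\dots\}$ (the zeros of $G(\tfrac{d+1}{2}+\tfrac t2)G(\tfrac{d+2}{2}+\tfrac t2)$), which is the assertion; the spherical model is the specialisation $\nu=0$. The main obstacle is the bookkeeping in the third step: one must carry the expansion of the $(n+\nu)/2$-factor to second Stirling order and multiply by the $\sim n$ prefactor, making sure that \emph{every} surviving $n$-dependent part of the $t$-linear coefficient is absorbed into the centering -- a single stray $\tfrac t2\log n$ would make the normalised moment generating function diverge rather than converge to $\psi(t)$. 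Keeping local uniformity in $t$ (hence analyticity of $\psi$) throughout is the only further technical point.
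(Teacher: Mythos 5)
Your route coincides with the paper's own proof: the identical decomposition of the Beta moment generating function into the Gaussian one plus four correction terms (this is relation~\eqref{eq:L_n_n_gauss_vs_beta} in the paper), the same appeal to Proposition~\ref{prop:mod_phi_full_dim_almost} for $\cL_{n,r}^{\text{G}}$, first-order Stirling for the $N$-factor (giving $t\log n-\tfrac t2\log 2+o(1)$, exactly as in the paper) and second-order Stirling for the $(r+1)$-weighted factor, with the $-\tfrac{t^2}{4}$ from the $\tfrac{a(a-1)}{2z}$ term supplying the variance correction $\log\tfrac n2\mapsto\log\tfrac n2-1$. Your expansions are correct; note in particular that your computation gives
\begin{equation*}
(r+1)\log\frac{\Gamma\big(\tfrac{n+\nu}{2}\big)}{\Gamma\big(\tfrac{n+\nu}{2}+\tfrac t2\big)}
= -\frac{(n-d+1)t}{2}\log\frac n2-\frac{t\nu}{2}-\frac{t(t-2)}{4}+o(1)
= \frac{(n+1)t}{2}\log\frac 2n+\frac{d}{2}\,t\log\frac n2-\frac t4(t-2+2\nu)+o(1),
\end{equation*}
with coefficient $\tfrac d2$ in the $t\log\tfrac n2$ term, whereas the corresponding display in the paper's proof carries $\tfrac{d-1}{2}$.

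The genuine gap is that the one step you yourself declare decisive --- the final bookkeeping of the $t$-linear terms --- is asserted rather than performed, and carried out with your own (correct) expansions it does \emph{not} land on the printed centering. Summing all contributions, the $\log 2$ terms cancel exactly, and the coefficient of $t\log n$ is $\tfrac n2+\tfrac14-\tfrac12-\tfrac{n-d+1}{2}+1=\tfrac{2d+1}{4}$, so the total drift is $t\big(\tilde m_n+\tfrac d2\log n\big)$ rather than $t\big(\tilde m_n+\tfrac{d-1}{2}\log\tfrac n2\big)$; the difference $\tfrac t2\log n+\tfrac{d-1}{2}t\log 2$ is exactly the ``single stray $\tfrac t2\log n$'' you warned about, so with the centering as printed the normalised moment generating function diverges like $n^{t/2}$. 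The correct version of the statement centers $\cL_{n,r}$ at $\tilde m_n+\tfrac d2\log n$ with the printed limit function (equivalently, at $\tilde m_n+\tfrac d2\log\tfrac n2$ with the factor $2^{td/2}$ removed from the limit function); a sanity check is that at $d=0$ this reduces to Proposition~\ref{prop:mod_phi_full_dimBeta}, whereas the printed centering does not. In short: your method and intermediate expansions are sound --- they in fact correct the slip in the paper's own proof --- but the claim that the reassembly ``yields the claim'' is the unverified step, and verifying it forces a correction either to your conclusion or to the proposition's centering.
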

\begin{proof}
The computations are similar to those in the proof of Proposition~\ref{prop:mod_phi_full_dimBeta}, but slightly more involved.
Again we let $\cL_{n,r}^{\text{G}}$ to be the Gaussian analogue of $\cL_{n,r}$. By Theorem~\ref{theo:vol_Simplices_Moments}(a),(b),  the moment generating function of $\cL_{n,r}$  is given by
\begin{multline}\label{eq:L_n_n_gauss_vs_beta}
\log \E \eee^{t\cL_{n,r}}
=
\log \E \eee^{t\cL_{n,r}^{\text{G}}} - \frac t2 \log (r+1) - \frac {tr}{2} \log 2
\\
+ (r+1)\log\left({\Gamma\Big({n+\nu\over 2}\Big)\over\Gamma\Big({n\over 2}+{\nu + t\over 2}\Big)}\right)
+ \log \left({\Gamma\Big({r(n+\nu-2)+n+\nu\over 2}+{(r+1)t\over 2}\Big)\over\Gamma\Big({r(n+\nu-2)+n+\nu\over 2}+{rt\over 2}\Big)}\right).
\end{multline}
Using the Stirling series for the logarithm of the Gamma function, we obtain
$$
(r+1)\log\left({\Gamma\Big({n+\nu\over 2}\Big)\over\Gamma\Big({n\over 2}+{\nu + t\over 2}\Big)}\right)  = {(n+1)t\over 2}\log{2\over n}-{t\over 4}(t-2+2\nu) + \frac{d-1}{2} t \log \frac n2 + o(1)
$$
and
$$
\log \left({\Gamma\Big({r(n+\nu-2)+n+\nu\over 2}+{(r+1)t\over 2}\Big)\over\Gamma\Big({r(n+\nu-2)+n+\nu\over 2}+{rt\over 2}\Big)}\right)
=
 t\log n-{t\over 2}\log 2+o(1).
$$
Using the behavior of $\cL_{n,r}^{\text{G}}$ stated in Proposition~\ref{prop:mod_phi_full_dim_almost}, we obtain, after some transformations,
\begin{align*}
&\log \E \eee^{t\cL_{n,r}}\\
&\qquad =
\log\left(
\frac {G\left(\frac{d+1}{2}\right) \cdot G\left(\frac{d+2}{2}\right)}
{2^{\frac{td}{2}}\,
G\left(\frac{d+1}{2} + \frac t2\right)
\cdot
G\left(\frac{d+2}{2} + \frac t2\right)}\right)
+ t\widetilde{m}_n + {t^2\over 4}\Big(\log{n\over 2}-1\Big) + \frac{d-1}{2} t \log \frac n2+ o(1),
\end{align*}
which yields the claim.
\end{proof}

\subsection{Case of diverging codimension}

In this section we consider the case  when the codimension of the simplex goes to $+\infty$.

\begin{proposition}\label{prop:mod_phi_div_codimension}
Consider the Gaussian model and let $m_n$ be the same as in Proposition~\ref{prop:mod_phi_full_dim}.
If $r=r(n)$ is such that $n-r \to  \infty$ as $n\to\infty$, then 
$$
\lim_{n\to\infty}
\frac{\E \eee^{t\big(\cL_{n,r} - (m_n-m_{n-r})-{1\over 2}\log\big({(r+1)(n-r)\over n}\big)\big)}}{\eee^{\frac 14 t^2 \log \frac{n}{n-r}}} = 1.
$$
If, additionally,  $n-r=o(n)$, then we have mod-Gaussian convergence (meaning that $\eta(t)={1\over 2}t^2$) with parameters $w_n = \frac 1 2 \log \frac {n}{n-r} \to\infty$ and limiting function identically equal to $1$.
\end{proposition}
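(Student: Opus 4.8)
The plan is to reduce the diverging-codimension case to the full-dimensional case treated in Proposition~\ref{prop:mod_phi_full_dim}, using the convolution identity \eqref{eq:DistributionalIdentityCL}. That identity writes the full-dimensional log-volume in dimension $n$ as an independent sum of the full-dimensional log-volume in dimension $n-r$ and an independent copy $\cL_{n,r}'$ of $\cL_{n,r}$. Since the two summands are independent, taking moment generating functions and solving for the factor corresponding to $\cL_{n,r}$ gives, for $t$ in the relevant domain,
\begin{equation*}
\E\eee^{t\cL_{n,r}} = \eee^{\frac t2\log(r+1)}\cdot\frac{\E\eee^{t(\cL_{n,n}-\frac12\log(n+1))}}{\E\eee^{t(\cL_{n-r,n-r}-\frac12\log(n-r+1))}}.
\end{equation*}
Because $n-r\to\infty$, Proposition~\ref{prop:mod_phi_full_dim} applies to both $\cL_{n,n}$ and $\cL_{n-r,n-r}$ (with $n$ replaced by $n-r$ in the latter). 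First I would take logarithms and substitute the asymptotic expansion \eqref{eq:GaussFallModPhiFull} for each of the two moment generating functions.

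The decisive point is that the nontrivial limiting function $\psi(t)=G(\tfrac12)/(G(\tfrac12+\tfrac t2)G(1+\tfrac t2))$ enters \eqref{eq:GaussFallModPhiFull} in exactly the same way in dimension $n$ and in dimension $n-r$, so the two copies of $\log\psi(t)$ cancel in the ratio. What remains is elementary: the two quadratic contributions combine into $\tfrac14 t^2(\log\frac n2-\log\frac{n-r}2)=\tfrac14 t^2\log\frac n{n-r}$, while the linear-in-$t$ part collects the $m_n-m_{n-r}$ terms together with the explicit $\tfrac t2\log(\cdot)$ corrections coming from the $\log(r+1)$, $\log(n+1)$ and $\log(n-r+1)$ factors. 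Subtracting the proposed centering $(m_n-m_{n-r})+\tfrac12\log\frac{(r+1)(n-r)}{n}$ annihilates the $m_n-m_{n-r}$ term and reduces the surviving linear term to $\tfrac t2\bigl(\log\frac{n}{n+1}+\log\frac{n-r+1}{n-r}\bigr)$. Both summands tend to $0$ — the first because $n\to\infty$, the second because $n-r\to\infty$ — so that $\log\E\eee^{t(\cL_{n,r}-(m_n-m_{n-r})-\frac12\log\frac{(r+1)(n-r)}{n})}=\tfrac14 t^2\log\frac n{n-r}+o(1)$, which is precisely the first assertion.

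For the second part I would simply observe that the computation above already identifies the limiting function as $\psi\equiv1$ and the normalization as $w_n\eta(t)$ with $\eta(t)=\tfrac12 t^2$ and $w_n=\tfrac12\log\frac n{n-r}$, and that the extra hypothesis $n-r=o(n)$ forces $\frac n{n-r}\to\infty$, hence $w_n\to\infty$, which is exactly the divergence of the parameter required in the definition of mod-$\phi$ convergence. The error terms in \eqref{eq:GaussFallModPhiFull} are uniform on compact subsets of $\C\setminus\{-1,-2,\ldots\}$, and since the $\psi(t)$ factors cancel rather than being estimated, this uniformity passes through the ratio, so the convergence is locally uniform as the definition demands.

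I expect the genuinely delicate point to be bookkeeping rather than hard analysis: one must line up the two applications of \eqref{eq:GaussFallModPhiFull} so that $\log\psi(t)$ cancels exactly and so that the residual linear term really collapses to the two vanishing logarithms above. The term $\log\frac{n-r+1}{n-r}$ is where the hypothesis $n-r\to\infty$ (rather than merely $r\le n$) is used, and it is this same quantity, through $w_n=\tfrac12\log\frac{n}{n-r}$, that distinguishes the general statement (where $w_n$ can stay bounded, e.g.\ when $r\sim\alpha n$) from the genuine mod-Gaussian regime $n-r=o(n)$.
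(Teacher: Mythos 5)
Your proposal is correct and follows essentially the same route as the paper: both use the convolution identity \eqref{eq:DistributionalIdentityCL} to write $\E\eee^{t\cL_{n,r}}$ as a ratio of the full-dimensional moment generating functions in dimensions $n$ and $n-r$, apply Proposition~\ref{prop:mod_phi_full_dim} to each (the denominator requiring exactly the hypothesis $n-r\to\infty$), cancel the common limit function $\psi(t)$, and absorb the $\log(n+1)$ versus $\log n$ and $\log(n-r+1)$ versus $\log(n-r)$ discrepancies into $o(1)$ terms. Your bookkeeping of the residual linear term $\frac t2\bigl(\log\frac{n}{n+1}+\log\frac{n-r+1}{n-r}\bigr)$ matches the paper's concluding observation, so there is nothing to correct.
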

\begin{proof}
From Proposition~\ref{prop:mod_phi_full_dim} we know that
$$
\lim_{n\to\infty} \frac{\E \eee^{t(\cL_{n,n} - m_n)}}{\eee^{\frac 14 t^2\log \frac n2}}
=
\lim_{n\to\infty} \frac{\E \eee^{t(\cL_{n-r,n-r} - m_{n-r})}}{\eee^{\frac 14 t^2\log \frac {n-r}2}}
=
\frac{G\left(\frac{1}{2}\right)}{G\left(\frac{1}{2} + \frac t2\right) G\left(1 + \frac t2\right)}.
$$
Using the distributional identity \eqref{eq:DistributionalIdentityCL} it follows that
\begin{align*}
\E \eee^{t\big(\cL_{n,r} - (m_n-m_{n-r})-{1\over 2}\log\big({(r+1)(n-r+1)\over n+1}\big)\big)} =  \frac{\E \eee^{t(\cL_{n,n} - m_n)}}{\E \eee^{t(\cL_{n-r,n-r} - m_{n-r})}}
\sim
\frac {\eee^{\frac 14 t^2\log \frac n2}}{\eee^{\frac 14 t^2\log \frac {n-r}2}}
\sim
\eee^{\frac 14 t^2\log \frac n{n-r}},
\end{align*}
which implies the claim after observing that $\log (n+1) = \log n + o(1)$ and $\log (n-r+1) = \log (n-r)+o(1)$. Observe also that if $n-r=o(n)$, then $w_n\to\infty$, as $n\to\infty$, which is otherwise not the case.
\end{proof}


\begin{proposition}\label{propositionbeta2}
Consider the Beta model with parameter $\nu>0$ or the spherical model (in which case $\nu=0$) and let $m_n$ be the same as in Proposition~\ref{prop:mod_phi_full_dim}.  If $r=r(n)$ is such that $n-r=o(n)$ as $n\to\infty$,  then,
$$
\lim_{n\to\infty}
\frac{\E \eee^{t\big(\cL_{n,r} - (m_n-m_{n-r}-{r+1\over 4n}(t-2+2\nu))- \frac 12 \log \frac{(n-r)(1+r)}{n^{1+r}}\big)}}{\eee^{\frac 14 t^2\log{n\over n-r}}} = 1.
$$
That is, we have mod-Gaussian convergence (meaning that $\eta(t)={1\over 2}t^2$) with parameters $w_n={1\over 2}\log{n\over n-r}$ and limiting function identically equal to $1$.
\end{proposition}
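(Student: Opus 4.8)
The plan is to follow the template of the proof of Proposition~\ref{propositionbeta}, but to feed in the diverging–codimension Gaussian result (Proposition~\ref{prop:mod_phi_div_codimension}) where that argument used the fixed–codimension one (Proposition~\ref{prop:mod_phi_full_dim_almost}). Throughout I work under the standing assumption of this section that $n-r\to\infty$, together with the stated hypothesis $n-r=o(n)$; the latter is exactly what makes $w_n=\tfrac12\log\tfrac{n}{n-r}\to\infty$, so that the statement is a genuine mod-Gaussian convergence. I would start from the exact identity~\eqref{eq:L_n_n_gauss_vs_beta}, valid for every $r\le n$ as a direct consequence of Theorem~\ref{theo:vol_Simplices_Moments}(a),(b), which writes $\log\E\eee^{t\cL_{n,r}}$ in the Beta model as the Gaussian log-moment generating function $\log\E\eee^{t\cL_{n,r}^{\mathrm G}}$ plus the elementary term $-\tfrac t2\log(r+1)-\tfrac{tr}{2}\log 2$ and two Gamma-ratio corrections
$$
T_2:=(r+1)\log\frac{\Gamma\!\big(\tfrac{n+\nu}{2}\big)}{\Gamma\!\big(\tfrac n2+\tfrac{\nu+t}{2}\big)},
\qquad
T_3:=\log\frac{\Gamma\!\big(A+\tfrac{(r+1)t}{2}\big)}{\Gamma\!\big(A+\tfrac{rt}{2}\big)},
\qquad A=\tfrac12\big(r(n+\nu-2)+n+\nu\big).
$$
Into the Gaussian term I substitute Proposition~\ref{prop:mod_phi_div_codimension}, giving $\log\E\eee^{t\cL_{n,r}^{\mathrm G}}=t(m_n-m_{n-r})+\tfrac t2\log\tfrac{(r+1)(n-r)}{n}+\tfrac14 t^2\log\tfrac{n}{n-r}+o(1)$, uniformly on compacts avoiding the poles.

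Next I would expand $T_2$ and $T_3$ by Stirling. In $T_2$ the two Gamma arguments are both of order $n$ and differ only by $t/2$, so a \emph{second}-order expansion is required: using $\log\Gamma(z+\tfrac t2)-\log\Gamma(z)=\tfrac t2\log z+\tfrac{t(t-2)}{8z}+O(1/z^2)$ with $z=\tfrac{n+\nu}{2}$, and the expansions $\log z=\log\tfrac n2+\tfrac\nu n+O(1/n^2)$, $1/z=\tfrac2n+O(1/n^2)$, one obtains
$$
T_2=-\frac{(r+1)t}{2}\log\frac n2-\frac{(r+1)t}{4n}(t-2+2\nu)+o(1).
$$
The crucial point is that the prefactor $r+1\le n+1$, so the $O(1/z^2)=O(1/n^2)$ remainder survives multiplication only as $O(1/n)=o(1)$; truncating to first order would leave an $O(1)$ error and is not allowed. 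For $T_3$ the base point $A\sim\tfrac{(r+1)n}{2}$ is of order $n^2$, so a first-order expansion already suffices and gives $T_3=\tfrac t2\log A+o(1)=\tfrac t2\log\tfrac{(r+1)n}{2}+o(1)$, the curvature term and the shift inside the logarithm both being swallowed by $o(1)$ because $A$ is so large.

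Finally I would add the four contributions and sort by the power of $t$. The $t^2$-terms combine to $\tfrac14 t^2\log\tfrac n{n-r}-\tfrac{(r+1)t^2}{4n}$, which matches the Gaussian exponent together with the $t^2$-part of the $t$-dependent centering $-\tfrac{r+1}{4n}(t-2+2\nu)$. After cancelling $(m_n-m_{n-r})$ and the $t$-linear part $-\tfrac{(r+1)(\nu-1)}{2n}$ of that same centering, the remaining $t^1$-terms reduce to the elementary identity
$$
\tfrac12\log\tfrac{(r+1)(n-r)}{n}-\tfrac12\log(r+1)-\tfrac r2\log 2-\tfrac{r+1}{2}\log\tfrac n2+\tfrac12\log\tfrac{(r+1)n}{2}=\tfrac12\log\tfrac{(n-r)(1+r)}{n^{1+r}},
$$
which one verifies directly by expanding both sides into $\log(r+1)+\log(n-r)-(r+1)\log n$. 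Collecting everything yields precisely the claimed centering and the exponent $\tfrac14 t^2\log\tfrac n{n-r}$, with an $o(1)$ error uniform on compact subsets of $\C\setminus\{-1,-2,\ldots\}$, and the proposition follows.

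The main obstacle I anticipate is the bookkeeping in $T_2$: one must carry the second-order Stirling term \emph{before} multiplying by the large factor $r+1\sim n$, since that is exactly where the $O(1)$ correction $-\tfrac{(r+1)t}{4n}(t-2+2\nu)$ originates which forces the unusual $t$-dependent centering. Pinning down its coefficient correctly, and reconciling its $t^2$-component with the Gaussian variance $\tfrac14 t^2\log\tfrac n{n-r}$ that is already present, is the delicate step; the remaining manipulations are routine Stirling asymptotics and the elementary logarithmic identity above.
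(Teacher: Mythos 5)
Your proposal is correct and follows essentially the same route as the paper's own proof: it starts from the decomposition \eqref{eq:L_n_n_gauss_vs_beta}, feeds in the Gaussian asymptotics from Proposition~\ref{prop:mod_phi_div_codimension}, applies a second-order Stirling expansion to the $(r+1)$-fold Gamma ratio (correctly identifying this as the source of the $t$-dependent centering term $-\frac{r+1}{4n}(t-2+2\nu)$) and a first-order expansion to the remaining ratio, and then collects terms by powers of $t$. Your additional bookkeeping (the explicit logarithmic identity for the $t$-linear terms, and the observation that the section's standing assumption $n-r\to\infty$ is needed alongside $n-r=o(n)$) merely makes explicit what the paper leaves implicit.
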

\begin{proof}
Denote by $\cL_{n,r}^{\text{G}}$ the Gaussian analogue of $\cL_{n,r}$. Observe that relation~\eqref{eq:L_n_n_gauss_vs_beta} still holds. Regarding the first term in this relation, we know from Proposition~\ref{prop:mod_phi_div_codimension} that
$$
\log \E \eee^{t\cL_{n,r}^{\text{G}}}
=  t(m_n-m_{n-r}) + {t\over 2}\log\left({(r+1)(n-r)\over n}\right)
+\frac 14 t^2 \log \frac{n}{n-r} + o(1).
$$
Again, a second-order Stirling expansion yields
$$
(r+1)\log\left({\Gamma\Big({n+\nu\over 2}\Big)\over\Gamma\Big({n\over 2}+{\nu + t\over 2}\Big)}\right)
=(r+1) \frac t2 \log \frac 2n -{r+1\over n}{t\over 4}(t-2+2\nu)+o(r/n^2)
$$
and
\begin{align*}
\log \left({\Gamma\Big({r(n+\nu-2)+n+\nu\over 2}+{(r+1)t\over 2}\Big)\over\Gamma\Big({r(n+\nu-2)+n+\nu\over 2}+{rt\over 2}\Big)}\right)
&=
\frac t2 \log \left({r(n+\nu-2)+n+\nu\over 2}+{rt\over 2}\right)+o(1)\\
&=
\frac t2 \log \left(\frac {(r+1)n}{2}\right)+o(1).
\end{align*}
Taking everything together, we obtain
\begin{align*}
&\log \E \eee^{t\cL_{n,r}}\\
&\qquad=
t\Big(m_n-m_{n-r}-{r+1\over 4n}(t-2+2\nu)\Big) + \frac t2 \log \frac{(n-r)(1+r)}{n^{1+r}}
+
\frac 14 t^2 \log \left(\frac n {n-r}\right) + o(1).
\end{align*}
This yields the claim, since $w_n={1\over 2}\log{n\over n-r}\to\infty$, as $n\to\infty$ by the assumption that $n-r=o(n)$.
\end{proof}

\section{Large deviations}

The purpose of this section is to derive large deviation principles, recall the definition at the beginning of Section \ref{SectionLDP}. Again, we restrict to the Gaussian, the Beta and the spherical model, which admit finite moments of all orders.

\subsection{The Gaussian model}

We start with the Gaussian model and recall the notation $\cL_{n,r} := \log(r! \cV_{n,r})$. Using the G\"artner--Ellis theorem we derive large deviation principles from the following Proposition.

\begin{proposition}\label{prop:gaertner_ellis_cond}
\begin{itemize}
\item[(a)] Let  $r=o(n)$, as $n\to\infty$. Then, we have
$$
\lim_{n\to\infty} \frac 1 {rn} \log \E \eee^{tn (\cL_{n,r}-  \frac r2 \log n - \frac{1}{2} \log (r+1))} = \frac 12 ((t+1)\log (t+1) - t).
$$
\item[(b)] If $r\sim \alpha n$, $\alpha \in (0,1)$, we have
\begin{align*}
\lim_{n\to\infty} \frac 1 {\alpha n^2} \log \E \eee^{tn (\cL_{n,r}-  \frac{\alpha n}{2} (\log n + \log(1-\alpha)))}    = \frac{2+ 2t - \alpha}{4} \log\left(\frac{1 + t - \alpha}{1 - \alpha}\right) - \frac{t}{2}.
\end{align*}
\item[(c)] Let $d\in \N$ and assume that $d = n - r$, as $n \rightarrow \infty$, and $m_n = \frac 12 (n \log n - n + \frac 12 \log n  + \log (2^{3/2}\pi))$ as in Proposition \ref{prop:mod_phi_full_dim_almost}. Then, we have
\begin{align*}
\lim_{n\to\infty} \frac 1 {\frac 12 \log  \frac{n}{2}} \log \E \eee^{t (\cL_{n,r}- m_n)} = \frac 12 t^2.
\end{align*}
\item[(d)] Let $r=r(n)$ be such that $n-r = o(n)$, as $n\rightarrow \infty$. Then, we have
\begin{align*}
\lim_{n\to\infty} \frac 1 {\frac 12 \log \frac{n}{n-r}} \log \E \eee^{t \big(\cL_{n,r}- (m_n - m_{n-r})-{1\over}\log\big({(r+1)(n-r+1)\over n+1}\big)\big)} = \frac 12 t^2.
\end{align*}
\end{itemize}
\end{proposition}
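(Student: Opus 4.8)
The plan is to compute, in each of the four regimes, the limit of the suitably normalised logarithmic moment generating function of $\cL_{n,r}$, reading everything off from the Gaussian moment formula of Theorem~\ref{theo:vol_Simplices_Moments}(a). Writing $s$ for the exponent, that formula gives
\[
\log \E \eee^{s\cL_{n,r}} = \frac s2 \log(r+1) + \frac{sr}{2}\log 2 + S_{n,r}(s),\qquad
S_{n,r}(s)=\sum_{j=1}^r\Big[\log\Gamma\big(\tfrac{n-r+j}{2}+\tfrac s2\big)-\log\Gamma\big(\tfrac{n-r+j}{2}\big)\Big].
\]
The key observation is that three of the four limits are immediate consequences of asymptotic expansions already established earlier in the paper, so that the only genuinely new work is part~(b). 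In each of (a), (c), (d) the mechanism is identical: the centred log-mgf equals $w_n\eta(t)+o(w_n)$ with $w_n$ the stated speed, so dividing by $w_n$ recovers $\eta(t)$.

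Concretely, for part~(a) I would invoke Remark~\ref{rem:ModPhiGaussr=o(n)}, which extends the expansion~\eqref{eq:GaussModelo(n)ModPhi} to $r=o(n)$; this yields $\log \E \eee^{tn(\cL_{n,r}-\frac r2\log n-\frac12\log(r+1))}=\frac{rn}{2}((t+1)\log(t+1)-t)-\frac{r(r+1)}{4}\log(t+1)+o(1)$, and division by $rn$ kills the middle term because $\frac{r+1}{4n}\to0$, leaving $\frac12((t+1)\log(t+1)-t)$. For part~(c) I would quote Proposition~\ref{prop:mod_phi_full_dim_almost}, giving $\log \E \eee^{t(\cL_{n,r}-m_n)}=\frac14 t^2\log\frac n2+O(1)$, the $O(1)$ being the logarithm of the bounded limiting Barnes-$G$ expression; division by $\frac12\log\frac n2$ then gives $\frac12 t^2$. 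For part~(d) I would quote the mod-Gaussian conclusion of Proposition~\ref{prop:mod_phi_div_codimension}, valid when $n-r=o(n)$, namely $\log \E \eee^{t(\cL_{n,r}-(m_n-m_{n-r})-\frac12\log(\cdots))}=\frac14 t^2\log\frac{n}{n-r}+o(1)$ with $\log\frac{n}{n-r}\to\infty$; dividing by $\frac12\log\frac{n}{n-r}$ again produces $\frac12 t^2$.

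The substantive case is part~(b), where $r\sim\alpha n$ and no earlier expansion applies; note in particular that Lemma~\ref{lem:barnes_G_diff_asympt} is unavailable, since here the shift $\tfrac{tn}{2}$ is comparable to the argument rather than $o$ of it. Setting $s=tn$, I would apply the Stirling asymptotics~\eqref{GammaLimit1} term by term, writing each argument as $\frac n2 w$ with $w_1=1+t-\frac rn+\frac jn$ and $w_2=1-\frac rn+\frac jn$, so that modulo $O(\log n)$ per summand each bracket equals $\frac n2(w_1\log w_1-w_2\log w_2)+\frac{tn}{2}\log\frac n2-\frac{tn}{2}$. Summing, the coefficient of $\log n$ is $\frac{tnr}{2}$, which upon adding the $\frac{tnr}{2}\log 2$ prefactor and subtracting the centering $tn\cdot\frac{\alpha n}{2}(\log n+\log(1-\alpha))$ leaves only lower-order terms (using that $r-\alpha n$ is bounded, so the residual $\frac{tn}{2}(r-\alpha n)\log n$ is $O(n\log n)=o(\alpha n^2)$). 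The order-$n^2$ remainder is then a Riemann sum converging to
\[
\frac{n^2}{2}\int_0^\alpha\Big[(1+t-\alpha+x)\log(1+t-\alpha+x)-(1-\alpha+x)\log(1-\alpha+x)-t\Big]\,\dd x .
\]
Evaluating the two integrals with $\int u\log u\,\dd u=\frac{u^2}{2}\log u-\frac{u^2}{4}$, dividing by $\alpha n^2$, and incorporating the $-\frac t2\log(1-\alpha)$ produced by the centering, one arrives after simplification at the stated closed form; as a consistency check, letting $\alpha\downarrow0$ recovers exactly the limit in part~(a).

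I expect the main obstacle to lie entirely in part~(b), in two places. First, one must justify that the passage from $S_{n,r}(tn)$ to the integral holds uniformly for $t$ in compact subsets of the relevant half-plane, which requires controlling the Stirling remainder uniformly and keeping $w_1,w_2$ bounded away from the branch cut; this uniformity is what is actually needed for the later G\"artner--Ellis application. Second, and most delicate, is the bookkeeping that matches the divergent $n^2\log n$ contribution against the deterministic centering so that a finite limit survives, followed by the elementary but fiddly algebraic collapse of the resulting combination of logarithms into the claimed expression. The reduction of (a), (c), (d) to the cited results, by contrast, is routine once the common setup is in place.
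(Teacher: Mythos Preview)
Your plan is correct and matches the paper's proof: parts~(a), (c), (d) are deduced exactly as you say from Remark~\ref{rem:ModPhiGaussr=o(n)} and Propositions~\ref{prop:mod_phi_full_dim_almost} and~\ref{prop:mod_phi_div_codimension}, and part~(b) is the only genuinely new computation, carried out by termwise Stirling on the product in Theorem~\ref{theo:vol_Simplices_Moments}(a). The single difference is cosmetic: where you pass to a Riemann integral $\frac{n^2}{2}\int_0^\alpha[\cdots]\,\dd x$, the paper applies~\eqref{GammaLimit1} with $b=j/2$ as the shift parameter, obtaining a summand linear in $j$ whose sum over $j=1,\dots,r$ is evaluated as an arithmetic series rather than an integral --- both routes yield the stated limit.
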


\begin{proof}
Part (a) is a consequence of Remark \ref{rem:ModPhiGaussr=o(n)}. The proofs of the (c) and (d) directly follow from the proofs of Propositions \ref{prop:mod_phi_full_dim_almost} and \ref{prop:mod_phi_div_codimension} in the previous section, respectively.

We turn now to the case that $r\sim\alpha n$. Due to the asymptotic formula \eqref{GammaLimit1} we obtain for all $\alpha \in (0,1)$, $t\ge 0$ and $j\in \mathbb{N}$ that
\begin{align*}
\log\left( {\Gamma\big({(1+t - \alpha)n+j\over 2}\big)\over\Gamma\big({(1-\alpha)n+j\over 2}\big)}  \right) &\sim \log\left(\frac{\exp(-\frac{(1+t-\alpha)n}{2}) \left(\frac{(1+t-\alpha)n}{2}\right)^{\frac{(1+t-\alpha)n}{2} + \frac{j-1}{2}}}{\exp(-\frac{(1-\alpha)n}{2}) \left(\frac{(1-\alpha)n}{2}\right)^{\frac{(1-\alpha)n}{2} + \frac{j-1}{2}}}\right)\\
&=\log\left( \exp\left(-\frac{t n}{2}\right) \left(\frac n2 \right)^{\frac{tn}{2}} \frac{\left(1+t - \alpha\right)^{\frac{(1+t-\alpha)n}{2}}}{\left(1-\alpha\right)^{\frac{(1-\alpha)n}{2}}}  \left(\frac{1+t-\alpha}{1-\alpha}\right)^{\frac{j-1}{2}} \right)\\
&= -\frac{t n}{2} + \frac{tn}{2} \log\left(\frac n2\right) + \frac{(1+t-\alpha)n}{2} \log\left(1+t-\alpha\right)\\
&\qquad \qquad - \frac{(1-\alpha)n}{2} \log\left(1-\alpha\right) + \frac{j-1}{2} \log\left( \frac{1+t-\alpha}{1-\alpha}\right),
\end{align*}
as $n \rightarrow \infty$, and thus
\begin{align*}
&\frac{1}{\alpha n^2} \log \E \eee^{tn \cL_{n,r}}  = \frac{1}{\alpha n^2} \left[\frac{tn}{2}\log (\alpha n + 1) + \frac{t \alpha n^2}{2} \log 2 + \sum_{j=1}^{\alpha n} \log\left( {\Gamma\big({(1+t-\alpha)n+j\over 2}\big)\over\Gamma\big({(1-\alpha)n+j\over 2}\big)}  \right)\right]\\
&\sim -\frac{t}{2} + \frac{t}{2} \log\left(n\right) + \frac{1+t-\alpha}{2} \log\left(1+t-\alpha\right) - \frac{1-\alpha}{2} \log\left(1-\alpha\right) + \frac{\alpha}{4} \log\left( \frac{1+t-\alpha}{1-\alpha}\right)\\
&= -\frac{t}{2} + \frac{t}{2} \log\left(n\right) + \frac{2+2t-\alpha}{4} \log\left(1+t-\alpha\right) - \frac{2-\alpha}{4} \log\left(1-\alpha\right)\,.
\end{align*}
This directly yields the result in the case $r \sim \alpha n$ in view of the moment formula for Gaussian simplices stated in Section \ref{sec:SectionModels}.
\end{proof}

We turn now to the large deviation principles for the log-volume of Gaussian simplices.

\begin{theorem}[LDP for Gaussian simplices]\label{LDPGaus}
\begin{itemize}
\item[(a)] Let $r=o(n)$, as $n\to\infty$. Then, ${1\over r}(\cL_{n,r}- \frac r2 \log n - \frac{1}{2} \log (r+1))$ satisfies a LDP with speed $rn$ and rate function
$$
I(x) = \frac 12 (e^{2x}-1) - x\,,\qquad x\in\R\,.
$$
\item[(b)] If $r\sim  \alpha n$, $\alpha \in (0,1)$, then, ${1\over \alpha n}(\cL_{n,r}- \frac{\alpha n}{2} (\log n + \log(1-\alpha)))$ satisfies a LDP with speed $\alpha n^2$ and rate function
$$
I(x) = \sup_{t \in \mathbb{R}} \left\{t x - \frac{2+ 2t - \alpha}{4} \log \left(\frac{1 + t - \alpha}{1 - \alpha}\right) +  \frac{t}{2}\right\} \,,\qquad x\in\R\,.
$$
\item[(c)] Let $d\in \N$ and assume that $d = n - r$, as $n \rightarrow \infty$, and $m_n = \frac 12 (n \log n - n + \frac 12 \log n  + \log (2^{3/2}\pi))$. Then, $\frac{1}{\frac{1}{2} \log \frac{n}{2}}(\cL_{n,r}- m_n)$ satisfies a LDP with speed $\frac{1}{2} \log \frac{n}{2}$ and rate function
$$
I(x) = \frac 12 x^2\,,\qquad x\in\R\,.
$$
\item[(d)] Let $r=r(n)$ be such that $n-r \rightarrow \infty$, as $n \rightarrow \infty$. If $n-r = o(n)$, as $n\rightarrow \infty$, then, $\frac{1}{\frac{1}{2} \log \frac{n}{n-r}}\big(\cL_{n,r}-(m_n - m_{n-r})-{1\over 2}\log\big({(r+1)(n-r+1)\over n+1}\big)\big)$ satisfies a LDP with speed $\frac{1}{2} \log \frac{n}{n-r}$ and rate function
$$
I(x) = \frac 12 x^2\,,\qquad x\in\R\,.
$$
\end{itemize}
\end{theorem}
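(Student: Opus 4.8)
The plan is to derive all four large deviation principles from the Gärtner--Ellis theorem (see, e.g., Dembo--Zeitouni), feeding in the limiting logarithmic moment generating functions supplied by Proposition~\ref{prop:gaertner_ellis_cond}. In each of the four regimes the quantity appearing in Theorem~\ref{LDPGaus} is a centred and rescaled copy of $\cL_{n,r}$; write it as $X_n$ and let $a_n$ denote the prescribed speed. With this notation the content of Proposition~\ref{prop:gaertner_ellis_cond}(a)--(d) is precisely that
$$
\Lambda(t) := \lim_{n\to\infty} \frac 1{a_n} \log \E \eee^{a_n t X_n}
$$
exists for every real $t$ in the respective effective domain and equals the function displayed there (for instance $\Lambda(t) = \frac12((t+1)\log(t+1) - t)$ in case (a) and $\Lambda(t) = \frac12 t^2$ in cases (c) and (d)). Indeed, unwinding the normalisations, $a_n t X_n$ reduces in (a) to $tn(\cL_{n,r} - \frac r2\log n - \frac12\log(r+1))$, in (b) to $tn(\cL_{n,r} - \frac{\alpha n}2(\log n + \log(1-\alpha)))$, and in (c), (d) to $t(\cL_{n,r} - m_n)$, respectively to the centred version in (d), so the stated limits coincide verbatim with Proposition~\ref{prop:gaertner_ellis_cond}.

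Once $\Lambda$ is identified, the LDP with speed $a_n$ and rate function $I = \Lambda^*$, the Fenchel--Legendre transform $\Lambda^*(x) = \sup_{t\in\R}(tx - \Lambda(t))$, follows from Gärtner--Ellis provided $\Lambda$ is lower semicontinuous, finite in a neighbourhood of the origin, and essentially smooth. In cases (c) and (d) this is immediate: $\Lambda(t) = \frac12 t^2$ is finite and $C^\infty$ on all of $\R$, hence trivially steep, and its Legendre transform is the Gaussian rate $\Lambda^*(x) = \frac12 x^2$. In cases (a) and (b) the effective domain is a half-line, namely $(-1,\infty)$ in (a) and $(\alpha-1,\infty)$ in (b), with $\Lambda \equiv +\infty$ to the left, since there the underlying (negative) moments of $\cV_{n,r}$ diverge in the limit. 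The origin lies in the interior of each domain, so finiteness near $0$ holds, and it remains to check steepness at the finite endpoint.

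For steepness I would compute $\Lambda'$ directly. In (a), $\Lambda'(t) = \frac12\log(t+1) \to -\infty$ as $t\downarrow -1$, which is exactly the essential-smoothness (steepness) condition; solving $\Lambda'(t) = x$ gives $t = \eee^{2x} - 1$, and substituting yields $\Lambda^*(x) = (\eee^{2x}-1)x - \Lambda(\eee^{2x}-1) = \frac12(\eee^{2x}-1) - x$, the claimed rate function. In (b) the same differentiation shows $\Lambda'(t) \to -\infty$ as $t \downarrow \alpha-1$, since the logarithmic term diverges while its prefactor tends to $\alpha/4 > 0$; this again gives steepness. Here the Legendre transform has no elementary closed form, so the rate function is simply left in the variational form stated in Theorem~\ref{LDPGaus}(b).

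The main obstacle is the verification of essential smoothness in cases (a) and (b): one must confirm both that $\Lambda = +\infty$ strictly to the left of the half-line, so that the boundary of the effective domain is genuinely the single point $-1$, respectively $\alpha-1$, and that $\Lambda'$ diverges there. Everything else, namely matching the normalisations to Proposition~\ref{prop:gaertner_ellis_cond} and carrying out the short Legendre computation in (a), is routine.
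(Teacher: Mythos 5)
Your proposal is the paper's proof, fleshed out: the paper likewise invokes the G\"artner--Ellis theorem together with Proposition~\ref{prop:gaertner_ellis_cond}, carries out the Legendre--Fenchel computation only in case (a) (optimiser $t=\eee^{2x}-1$, giving $I(x)=\frac{1}{2}(\eee^{2x}-1)-x$, exactly as you do), and dismisses the remaining regimes with ``the same argument''. Your matching of the normalisations and speeds, the identification of the effective domain $(-1,\infty)$ in (a) via divergence of the negative moments, the steepness $\Lambda'(t)=\frac{1}{2}\log(t+1)\to-\infty$ there, and the trivial cases (c), (d) with $\Lambda(t)=\frac{1}{2}t^2$ and $\Lambda^*(x)=\frac{1}{2}x^2$ are all correct and consistent with the paper.

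Case (b), however, contains a genuine error, and it sits precisely in the step you singled out as the main obstacle. Differentiating the function of Proposition~\ref{prop:gaertner_ellis_cond}(b) gives
$$
\Lambda'(t)=\frac{1}{2}\log\frac{1+t-\alpha}{1-\alpha}+\frac{\alpha}{4(1+t-\alpha)},
$$
and as $t\downarrow\alpha-1$ the second term dominates the logarithm, so $\Lambda'(t)\to+\infty$, not $-\infty$. Your stated justification (``the logarithmic term diverges while its prefactor tends to $\alpha/4>0$'') is a statement about $\Lambda$ itself and shows $\Lambda(t)\to-\infty$ at the finite endpoint --- and that should have been a red flag: $\Lambda$ is a pointwise limit of the convex functions $t\mapsto\frac{1}{\alpha n^2}\log\E\,\eee^{tn(\cdot)}$, hence must be convex where finite, and a convex function finite on $(\alpha-1,\infty)$ cannot tend to $-\infty$ at the endpoint. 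Indeed $\Lambda''(t)=\frac{2+2t-3\alpha}{4(1+t-\alpha)^2}$ is negative for $t<\frac{3\alpha}{2}-1$, and for $\alpha>\frac{2}{3}$ even at $t=0$, where it would have to equal $\lim_{n\to\infty}\frac{1}{\alpha}\Var\cL_{n,r}=\frac{1}{2\alpha}\log\frac{1}{1-\alpha}>0$ by Theorem~\ref{thm:Cumulants}(a). So the formula in Proposition~\ref{prop:gaertner_ellis_cond}(b) cannot be the true limiting cumulant generating function on $(\alpha-1,\infty)$; its derivation applies \eqref{GammaLimit1} with $b=j/2$ growing up to $\alpha n/2$, outside the fixed-$b$ regime in which that asymptotics holds (this misuse is harmless when $r=o(n)$ but not when $r\sim\alpha n$), and is only carried out for $t\geq 0$ anyway. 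Consequently the essential-smoothness verification you propose cannot be completed from that formula: one must first re-derive the limit --- a sum-to-integral computation gives, up to linear terms, $\frac{1}{2\alpha}\int_{1-\alpha}^{1}\left[(v+t)\log(v+t)-v\log v\right]\dd v$, which is convex and reproduces the correct variance --- and for this corrected $\Lambda$ the derivative stays \emph{bounded} as $t\downarrow\alpha-1$, so steepness genuinely fails and the G\"artner--Ellis lower bound requires an extra argument near the boundary. The paper shares this gap (its proof of (b) is one sentence, and the rate function in Theorem~\ref{LDPGaus}(b) inherits it), but your write-up, by attempting the check explicitly, commits to an incorrect derivative computation instead of exposing the problem.
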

\begin{proof}[Proof of Theorem \ref{LDPGaus}]
Let $r=o(n)$, as $n\to\infty$. Then, by the G\"artner--Ellis theorem (cf.\ Section 2.3 in \cite{Dembo}) and Proposition \ref{prop:gaertner_ellis_cond}, the random variables ${1\over r}(\cL_{n,r}-\frac r2 \log n - \frac{1}{2} \log (r+1))$ satisfy a LDP with speed $rn$ and rate function
$$
I(x) = \sup_{t\in\R}\big[tx-\frac 12 ((t+1)\log (t+1) - t)\big],
$$
i.e., the Legendre-Fenchel transform of the function $\frac 12 ((t+1)\log (t+1) - t)$. For each $x\in\R$ the supremum is attained at $t=e^{2x}-1$, which yields the result of (a). The same argument implies the LDP for the other regimes of $r$ as well. 
\end{proof}

\subsection{The Beta and the spherical model}
Now, we turn to the Beta model with parameter $\nu > 0$ and the spherical model, i.e., $\nu = 0$, and recall that $\cL_{n,r}:=\log(r!\cV_{n,r})$, where $\cV_{n,r}$ is the volume of the $r$-dimensional simplex with vertices $X_1,\ldots,X_{r+1}$ chosen according to the Beta or the spherical distribution, respectively. Similar to the Gaussian case, we start with the following proposition that will imply the large deviation principles.

\begin{proposition}\label{prop:gaertner_ellis_cond2}
\begin{itemize}
\item[(a)] 	Let  $r\in\N$ be fixed. Then, we have
	$$
	\lim_{n\to\infty} \frac 1 {n} \log \E \eee^{tn \cL_{n,r}} = \begin{cases}
	\eta(t) &: t\geq -1\\
	+\infty & \text{otherwise},
	\end{cases}
	$$
	where $\eta$ is the function from Proposition \ref{prop:BetaMomentGeneratingFunction}.
\item[(b)] 	If $r \sim \alpha n$, $\alpha \in (0,1)$, we have
	\begin{align*}
	&\lim_{n\to\infty} \frac 1 {\alpha n^2} \log \E \eee^{tn\cL_{n,r}}  = \begin{cases}
	\eta(t) &: t\geq -1\\
	+\infty &: \text{otherwise},
	\end{cases}
	\end{align*}
		where $\psi(t)$ is the function given by
		$$
		\eta(t):=\frac{2+2t-\alpha}{4} \log\left(1+t-\alpha\right) - \frac{2-\alpha}{4} \log\left(1-\alpha\right)- \frac{1+t}{2} \log(1+t)\,.
		$$
\item[(c)] 	Let $d\in \N$ and assume that $d = n - r$, as $n \rightarrow \infty$, and let $\widetilde{m}_n =  {1\over 2}({1\over 2}\log n-n+1-\nu+\log(2^{3/2}\pi))$ as in Proposition \ref{prop:mod_phi_full_dimBeta}. Then, we have
	\begin{align*}
	\lim_{n\to\infty} \frac 1 {\frac 12 (\log  \frac{n}{2} - 1)} \log \E \eee^{t (\cL_{n,r}- \widetilde{m}_n - \frac{d-1}{2} \log \frac{n}{2})} = \frac{1}{2} t^2.
	\end{align*}
\item[(d)] 	Let $r=r(n)$ be such that $n-r = o(n)$, and let $m_n = \frac 12 (n \log n - n + \frac 12 \log n  + \log (2^{3/2}\pi))$ be as in Proposition \ref{prop:mod_phi_full_dim_almost}. Then
	\begin{align*}
	\lim_{n\to\infty} \frac 1 {\frac 12 \log \frac{n}{n-r}} \log \E \eee^{t\big(\cL_{n,r} - (m_n-m_{n-r}-{r+1\over 4n}(t-2+2\nu))- \frac 12 \log \frac{(n-r)(1+r)}{n^{1+r}}\big)} = {1\over 2}t^2.
	\end{align*}
\end{itemize}
\end{proposition}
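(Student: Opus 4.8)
The plan is to dispatch the four regimes separately. In each case the left-hand side has the shape $\frac{1}{w_n}\log\E\eee^{t(\cdots)}$, where the exponent is \emph{precisely} the centred log-volume that already figures in the corresponding mod-$\phi$ statement of Section~\ref{sec:mod_phi}; so for three of the four regimes the argument is simply to take logarithms in the relevant convergence and divide by $w_n$, letting the bounded limiting function drop out. Only the regime $r\sim\alpha n$ requires genuine work, since it is not covered by any mod-$\phi$ result. Throughout, the spherical model is the case $\nu=0$.

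For part~(a) I would start from Proposition~\ref{prop:BetaMomentGeneratingFunction}, which for fixed $r$ gives $\E\eee^{tn\cL_{n,r}}=\psi(t)\,\eee^{n\eta(t)}(1+o(1))$ locally uniformly on $\C\setminus(-\infty,-1)$, with $\eta,\psi$ as stated there; taking logarithms and dividing by $n$ kills the bounded term $\log\psi(t)$ and yields $\eta(t)$ for every $t>-1$. For $t<-1$ the limit is $+\infty$: by the representation in Theorem~\ref{theo:vol_distr_affine}(b) the product factor is the variable $\cW_{n,r}^2=\prod_{j=1}^r\beta_{(n-r+j)/2,(\nu+r-j)/2}$ of Theorem~\ref{theo:vol_distr_linear}(b), whose $k$-th moment is finite only for $k>-(n-r+1)/2$ (the $\xi,\eta$ factors being harmless), so $\E\eee^{tn\cL_{n,r}}=+\infty$ once $tn<-(n-r+1)$, which for fixed $t<-1$ holds for all large $n$. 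Parts~(c) and~(d) are handled identically, invoking Proposition~\ref{propositionbeta} and Proposition~\ref{propositionbeta2} respectively: both are mod-Gaussian statements ($\eta(t)=\tfrac12t^2$) with $w_n\to\infty$ and a \emph{bounded} limiting function (a ratio of Barnes $G$-values in~(c), identically $1$ in~(d)), and the centring used in the proposition is exactly the one appearing there, so dividing $\log\E\eee^{t(\cdots)}=w_n\cdot\tfrac12t^2+O(1)$ by $w_n$ gives $\tfrac12t^2$ in the limit.

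The core of the argument is part~(b). Here I would work from the Gaussian--Beta comparison of moment generating functions, i.e.\ identity~\eqref{eq:L_n_n_gauss_vs_beta} with $t$ replaced by $tn$ (and $\cL_{n,r}^{\text{G}}$ the Gaussian analogue of $\cL_{n,r}$),
\begin{align*}
\log\E\eee^{tn\cL_{n,r}}
&=\log\E\eee^{tn\cL_{n,r}^{\text{G}}}-\tfrac{tnr}{2}\log2-\tfrac{tn}{2}\log(r+1)\\
&\quad+(r+1)\log\frac{\Gamma\big(\tfrac{n+\nu}{2}\big)}{\Gamma\big(\tfrac{(1+t)n+\nu}{2}\big)}
+\log\frac{\Gamma\big(\tfrac{r(n+\nu-2)+n+\nu}{2}+\tfrac{(r+1)tn}{2}\big)}{\Gamma\big(\tfrac{r(n+\nu-2)+n+\nu}{2}+\tfrac{rtn}{2}\big)}.
\end{align*}
The Gaussian term is already controlled by the computation in the proof of Proposition~\ref{prop:gaertner_ellis_cond}(b); the two remaining $\Gamma$-ratios I would expand with the Stirling-type relation~\eqref{GammaLimit1}, noting that the last ratio is only of order $n\log n$ and hence negligible after division by $\alpha n^2$.

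I expect the main obstacle to be the bookkeeping of the terms of order $n^2\log n$. The uncentred Gaussian contribution carries a divergent $\tfrac{t}{2}\log n$ (this is exactly why the Gaussian statement required a $\tfrac{\alpha n}{2}\log n$ centring), while the factor $(r+1)\log[\Gamma(\tfrac{n+\nu}{2})/\Gamma(\tfrac{(1+t)n+\nu}{2})]$ produces a matching $-\tfrac{t}{2}\log n$; the delicate point is to verify that these, together with the stray $\log 2$ and constant $\tfrac t2$ terms, cancel \emph{exactly}, so that a finite limit survives with no centring at all. After the cancellation one collects the surviving $\log(1+t-\alpha)$, $\log(1-\alpha)$ and $\log(1+t)$ terms and checks that they reassemble into the stated $\eta(t)=\tfrac{2+2t-\alpha}{4}\log(1+t-\alpha)-\tfrac{2-\alpha}{4}\log(1-\alpha)-\tfrac{1+t}{2}\log(1+t)$. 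Finally, for $t$ below the finiteness threshold of the negative moments (equivalently, where $\log(1+t-\alpha)$ ceases to be real) the same Beta-representation argument as in part~(a) shows $\E\eee^{tn\cL_{n,r}}=+\infty$, giving the value $+\infty$ there and completing the proposition.
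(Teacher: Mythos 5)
Your overall route coincides with the paper's own proof. Parts (a) (for $t\ge -1$), (c) and (d) are obtained there exactly as you describe: take logarithms in Propositions \ref{prop:BetaMomentGeneratingFunction}, \ref{propositionbeta} and \ref{propositionbeta2}, divide by $w_n\to\infty$, and let the bounded limiting function drop out. For (b) the paper likewise recycles the Gaussian computation from Proposition \ref{prop:gaertner_ellis_cond}(b) for the product $\sum_{j=1}^{\alpha n}\log\bigl[\Gamma\bigl(\tfrac{(1+t-\alpha)n+j}{2}\bigr)/\Gamma\bigl(\tfrac{(1-\alpha)n+j}{2}\bigr)\bigr]$ and expands the two extra Gamma ratios via \eqref{GammaLimit1}; your organization through \eqref{eq:L_n_n_gauss_vs_beta} with $t\mapsto tn$ is only cosmetically different, and the cancellation you flag does check out: the divergent $\tfrac t2\log n$ (and $\log 2$, and $\pm\tfrac t2$) terms cancel, the middle ratio is indeed $O(n\log n)$ hence negligible, and the coefficient of $\log(1-\alpha)$ collects to $\tfrac t2-\tfrac{2+2t-\alpha}{4}=-\tfrac{2-\alpha}{4}$, reproducing $\eta(t)$. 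You are in fact sharper than the statement itself on one point: in regime (b) the finiteness threshold is $t=\alpha-1$ (where $1+t-\alpha$ changes sign), not $t=-1$ as the displayed dichotomy suggests.

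The one step that does not go through as written is your divergence argument for $t<-1$ in (a). The parenthetical ``the $\xi,\eta$ factors being harmless'' is false: with $k=tn/2$ and $\xi\sim\Beta\bigl(\tfrac{n+\nu}{2},\tfrac{r(n+\nu-2)}{2}\bigr)$ one has $\E[\xi^k(1-\xi)^{rk}]=B\bigl(\tfrac{n+\nu}{2}+k,\tfrac{r(n+\nu-2)}{2}+rk\bigr)/B\bigl(\tfrac{n+\nu}{2},\tfrac{r(n+\nu-2)}{2}\bigr)$, and for fixed $t<-1$ the first argument $\tfrac{(1+t)n+\nu}{2}\to-\infty$, so this moment is itself $+\infty$ for large $n$. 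The factorization $\E[\xi^k(1-\xi)^{rk}]\,\E[(r!\cV_{n,r})^{2k}]=\E[(1-\eta)^{rk}]\prod_j\E[\beta_j^k]$ (valid in $[0,\infty]$ by independence and nonnegativity) then reads $\infty\cdot\E[(r!\cV_{n,r})^{2k}]=\infty$, which says nothing about $\E[(r!\cV_{n,r})^{tn}]$; monotonicity also points the wrong way, since $\xi(1-\xi)^r\le 1$ and $k<0$ only give $\E[(r!\cV_{n,r})^{2k}]\le\E[(1-\eta)^{rk}]\prod_j\E[\beta_j^k]$. To be fair, the paper's own treatment of this branch is a one-liner with essentially the same leap (it exhibits one divergent Beta factor and stops). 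A clean repair: write $r!\cV_{n,r}\le C_r\,\cD$, where $\cD$ is the distance from $X_{r+1}$ to $\aff(X_1,\ldots,X_r)$ and $C_r$ bounds the $(r-1)$-dimensional base volume (all points lie in the unit ball); since the conditional density of $\cD$ behaves like $c\,h^{n-r}$ near $h=0$ (compare Theorem \ref{theo:distance_distr}), one gets $\E[(r!\cV_{n,r})^{tn}]\ge C_r^{tn}\,\E[\cD^{tn}]=+\infty$ as soon as $tn\le-(n-r+1)$, which holds for all large $n$ once $t<-1$ (and, in regime (b), once $t<\alpha-1$).
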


\begin{proof}[Proof of Proposition \ref{prop:gaertner_ellis_cond}]
	For $t\geq -1$ the assertions in (a) follows from Proposition \ref{prop:BetaMomentGeneratingFunction}. Recall from Theorem \ref{theo:vol_distr_affine} that the distribution of $\cV_{n,r}$ involves Beta random variables $Z:=\beta_{{\nu+r-j\over 2},{n-r+j\over 2}}$ with $j\leq r$. Writing
	$$
	\E e^{{tn\over 2}\log Z} = \E Z^{tn\over 2} = c\,\int_0^{1} z^{{n-r+j\over 2}+{tn\over 2}-1}(1-z)^{{\nu+r-j\over 2}-1}\,\dd z
	$$
	we see that the exponent at $z$ is less than $-1$ for sufficiently large $n$ if $t<-1$. This implies that $\E e^{{tn\over 2}\log Z}\to+\infty$ and completes the proof of (a).
	
	Now, let us turn towards the case $r\sim\alpha n$, $\alpha \in (0,1)$ in (b).
	Similar to what has been done in the Gaussian setting, we obtain by using the asymptotic formula \eqref{GammaLimit1} for all $\nu > 0$,
	\begin{align*}
	(\alpha n+1) \log\left( {\Gamma\big({n+\nu\over 2}\big)\over\Gamma\big({(1+t)n+\nu\over 2}\big)}  \right) &\sim (\alpha n + 1) \left(\frac{t n}{2} - \frac{tn}{2} \log\left(\frac n2\right) - \frac{(1+t)n + \nu -1}{2} \log(1+t) \right)\\
	&\sim \frac{t \alpha n^2}{2} - \frac{t\alpha n^2}{2} \log\left(\frac n2\right) - \frac{(1+t)\alpha n^2 + \alpha n (\nu -1)}{2} \log(1+t),
	\end{align*}
	as $n\rightarrow \infty$, and for all $t\ge 0$,
	\begin{align*}
	&\log\left( {\Gamma\big({\alpha n(n+\nu-2) + n + tn(\alpha n +1) +\nu\over 2}\big)\over\Gamma\big({\alpha n(n+\nu-2) + n + tn\alpha n +\nu\over 2}\big)}  \right)\\
	&\sim -\frac{t  n}{2} + \frac{tn}{2} \log\left(\frac n2\right) + \frac{\alpha n(n+\nu-2) + n + tn(\alpha n +1) +\nu}{2} \log\left(\alpha (n+\nu-2) + 1 + t(\alpha n +1)\right)\\
	&\qquad \qquad - \frac{\alpha n(n+\nu-2) + n + tn\alpha n  +\nu}{2} \log\left(\alpha (n+\nu-2) + 1+ t\alpha n\right).
	\end{align*}
	Thus, by using the calculations made in the Gaussian case above, we conclude that
	\begin{align*}
	&\frac{1}{\alpha n^2} \log \E \eee^{tn\cL_{n,r}}\\
	&= \frac{1}{\alpha n^2}\left[ (\alpha n+1) \log\left( {\Gamma\big({n+\nu\over 2}\big)\over\Gamma\big({(1+t)n+\nu\over 2}\big)}  \right)\right.\\
	&\left.\qquad + \log\left( {\Gamma\big({\alpha n(n+\nu-2) + n + tn(\alpha n +1) +\nu\over 2}\big)\over\Gamma\big({\alpha n(n+\nu-2) + n + tn\alpha n +\nu\over 2}\big)}  \right) + \sum_{j=1}^{\alpha n} \log\left( {\Gamma\big({(1+t-\alpha)n+j\over 2}\big)\over\Gamma\big({(1-\alpha)n+j\over 2}\big)}  \right)\right]\\
	&\sim \frac{t}{2} - \frac{t}{2} \log\left(\frac n2\right) - \frac{1+t}{2} \log(1+t) + \frac{1+t}{2}  \log\left(\alpha (n+\nu-2) + 1 + t(\alpha n +1)\right)\\
	&\qquad - \frac{1+t}{2}  \log\left(\alpha (n+\nu-2) + 1 + t\alpha n\right) -\frac{t}{2} + \frac{t}{2} \log\left(\frac n2\right) + \frac{2+2t-\alpha}{4} \log\left(1+t-\alpha\right)\\
	&\qquad - \frac{2-\alpha}{4} \log\left(1-\alpha\right)\\
	&\sim - \frac{1+t}{2} \log(1+t) + \frac{2+2t-\alpha}{4} \log\left(1+t-\alpha\right) - \frac{2-\alpha}{4} \log\left(1-\alpha\right),
	\end{align*}
	as $n\rightarrow \infty$.
	This directly yields the result in the case where $r \sim  \alpha n$, again taking into account the moment representation in the Beta model stated in Section \ref{sec:SectionModels}. Since there is no dependence on the parameter $\nu$ in the result concerning the Beta model, the one regarding the spherical model is implied by considering the limiting case $\nu \downarrow 0$ as seen several times before.\\
	The proofs of the (c) and (d) directly follow from the proofs of Propositions \ref{propositionbeta} and \ref{propositionbeta2} in the previous section, respectively.
\end{proof}

Now, we are able to state the large deviation principles for the Beta and the spherical model. Their proofs follow the same lines as the ones in the Gaussian case presented above by using the G\"artner--Ellis theorem. For this reason we have decided to skip them.

\begin{theorem}[LDP for Beta-type and spherical simplices]\label{LDPBeta}
\begin{itemize}
\item[(a)] 	Let $r\in\N$ be fixed. Then, $\cL_{n,r}$ satisfies a LDP with speed $n$ and rate function
	$$
	I(x) = \sup_{t\in\R}\big\{tx-\eta(t)\big\},
	$$
	where $\eta$ is the function from Proposition \ref{prop:BetaMomentGeneratingFunction}.
\item[(b)] 	If $r\sim \alpha n$, $\alpha \in (0,1)$, then, ${1\over \alpha n}\cL_{n,r}$ satisfies a LDP with speed $\alpha n^2$ and rate function
	$$
	I(x) = \sup_{t \in\R} \left\{t x - \eta(t) \right\} ,
	$$
	where $\eta$ is the function from Proposition \ref{prop:gaertner_ellis_cond2} (b).
\item[(c)] 	Let $d\in \N$ and assume that $d = n - r$, as $n \rightarrow \infty$, and $\widetilde{m}_n =  {1\over 2}({1\over 2}\log n-n+1-\nu+\log(2^{3/2}\pi))$. Then, $\frac{1}{\frac{1}{2}(\log \frac{n}{2} - 1)}(\cL_{n,r}- \widetilde{m}_n - \frac{d-1}{2} \log \frac{n}{2})$ satisfies a LDP with speed $\frac{1}{2}(\log \frac{n}{2} - 1)$ and rate function
	$$
	I(x) = \frac{1}{2} x^2 \,,\qquad x\in\R\,.
	$$
\item[(d)] 	Let $r=r(n)$ be such that $n-r = o(n)$, and let $m_n = \frac 12 (n \log n - n + \frac 12 \log n  + \log (2^{3/2}\pi))$ be defined as in Proposition \ref{prop:mod_phi_full_dim_almost}. Then, ${1\over {1\over 2}\log{n\over n-r}}\big(\cL_{n,r} - (m_n-m_{n-r}-{r+1\over 4n}(t-2+2\nu))- \frac 12 \log \frac{(n-r)(1+r)}{n^{1+r}}\big)$ satisfies a LDP with speed ${1\over 2}\log{n\over n-r}$ and rate function
	$$
	I(x) = {1\over 2}x^2 \,,\qquad x\in\R\,.
	$$
\end{itemize}
\end{theorem}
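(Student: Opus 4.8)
The plan is to follow the route already used for the Gaussian simplices in Theorem~\ref{LDPGaus}, namely to apply the G\"artner--Ellis theorem (cf.\ Section~2.3 in \cite{Dembo}), the required scaled cumulant generating functions being exactly the limits recorded in Proposition~\ref{prop:gaertner_ellis_cond2}. In each of the four regimes one identifies the normalized quantity $Y_n$ and a speed $a_n$ such that $\frac1{a_n}\log\E\eee^{t a_n Y_n}$ coincides with one of the four expressions there. In case (a) I would take $a_n=n$ and $Y_n=\cL_{n,r}$, so that $\frac1n\log\E\eee^{tn\cL_{n,r}}\to\eta(t)$ with $\eta$ from Proposition~\ref{prop:BetaMomentGeneratingFunction}; in case (b) I would take $a_n=\alpha n^2$ and $Y_n=\frac1{\alpha n}\cL_{n,r}$, so that $a_nY_n=n\cL_{n,r}$ and the limit is the function $\eta$ of Proposition~\ref{prop:gaertner_ellis_cond2}(b); in cases (c) and (d) I would take for $Y_n$ the centered and normalized variables displayed in Proposition~\ref{prop:gaertner_ellis_cond2}(c),(d), with speeds $a_n=\frac12(\log\frac n2-1)$ and $a_n=\frac12\log\frac n{n-r}$, respectively, for which the limit is the Gaussian $\Lambda(t)=\frac12 t^2$. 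The rate function is then the Legendre--Fenchel transform $I=\Lambda^\ast$, which gives the stated variational formulas in (a),(b) and $I(x)=\frac12 x^2$ in (c),(d). As in all previous sections, the spherical model is recovered from the Beta model by letting $\nu\downarrow 0$, the limits being $\nu$-independent.

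To invoke the theorem I must check that $\Lambda$ is lower semicontinuous, essentially smooth, and finite in a neighbourhood of $t=0$. In (c) and (d) this is immediate, since $\Lambda(t)=\frac12 t^2$ is finite and smooth on all of $\R$ and is its own Legendre dual. The genuinely delicate regimes are (a) and (b), where the effective domain $\mathcal D_\Lambda=\{t:\Lambda(t)<\infty\}$ has a finite left endpoint: for fixed $r$ one has $\mathcal D_\Lambda=[-1,\infty)$, while for $r\sim\alpha n$ one has, asymptotically, $\mathcal D_\Lambda=(\alpha-1,\infty)$. I would emphasize that this threshold is genuine and not an artefact: it comes from the divergence of the negative moments $\E[(r!\cV_{n,r})^{tn}]$, that is, from the fact that the Beta integral $\int_0^1 z^{\frac{n-r+j}{2}+\frac{tn}{2}-1}(1-z)^{\cdots}\,\dd z$ is integrable at $z=0$ only while $\frac{n-r+1}{2}+\frac{tn}{2}>0$, which is exactly how the value $+\infty$ arises in Proposition~\ref{prop:gaertner_ellis_cond2}(a),(b). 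Since $0$ always lies strictly to the right of this negative endpoint, $0\in\mathcal D_\Lambda^\circ$.

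The main obstacle is therefore the essential smoothness of $\Lambda=\eta$ at this finite left endpoint, i.e.\ the steepness condition $|\eta'(t)|\to\infty$ as $t$ decreases to the boundary of $\mathcal D_\Lambda^\circ$; interior differentiability and convexity of $\eta$ are clear from its explicit real-analytic form. For fixed $r$ I would substitute $s=t+1\downarrow0$ into the formula of Proposition~\ref{prop:BetaMomentGeneratingFunction} and differentiate: the three logarithmic contributions combine, the constant parts cancelling, to give $\eta'(t)\sim\frac r2\log s\to-\infty$. For $r\sim\alpha n$ I would differentiate the $\eta$ of Proposition~\ref{prop:gaertner_ellis_cond2}(b) and observe that the term $\frac{2+2t-\alpha}{4(1+t-\alpha)}$ dominates and forces $\eta'(t)\to+\infty$ as $t\downarrow\alpha-1$. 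In both cases $\eta$ is thus steep at its finite boundary and hence essentially smooth, so the G\"artner--Ellis theorem applies and identifies the rate function with $\eta^\ast$, settling (a) and (b).

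The remaining bookkeeping is routine and parallels the Gaussian computations of the previous section: in (c) and (d) one only has to confirm that the centering and scaling constants are exactly those making $\frac1{a_n}\log\E\eee^{t a_n Y_n}\to\frac12 t^2$ (which is precisely the content of Propositions~\ref{propositionbeta} and \ref{propositionbeta2} feeding into Proposition~\ref{prop:gaertner_ellis_cond2}(c),(d)), and then $I=\Lambda^\ast=\frac12(\,\cdot\,)^2$. The passage to the spherical case by $\nu\downarrow0$ requires no new argument, as all limiting cumulant generating functions are independent of $\nu$.
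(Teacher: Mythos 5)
Your proposal matches the paper's own treatment: the paper proves Theorem~\ref{LDPBeta} by invoking the G\"artner--Ellis theorem with the scaled cumulant generating function limits of Proposition~\ref{prop:gaertner_ellis_cond2}, exactly parallel to the Gaussian Theorem~\ref{LDPGaus}, and explicitly skips the details as identical to that case. Your additional verification of the effective domain and of steepness at its finite left endpoint in cases (a) and (b) supplies hypotheses that the paper leaves implicit, so your argument is, if anything, more complete than the published one.
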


\begin{remark}
One can combine Theorem \ref{LDPBeta} with the contraction principle from large deviation theory to obtain a LDP for $\cV_{n,r}$, that is, for the volume of the random simplex itself in the cases that $r=o(n)$ and $r\sim \alpha n$ for some $\alpha\in(0,1)$.
\end{remark}

\subsection*{Acknowledgement}
JG has been supported by the German Research Foundation (DFG) via Research Training Group RTG 2131 \textit{High dimensional Phenomena in Probability -- Fluctuations and Discontinuity}. ZK and CT were supported by the DFG Scientific Network \textit{Cumulants, Concentration and Superconcentration}.

\end{document}